\documentclass[a4paper,11pt]{amsart}
\usepackage{graphicx}
\usepackage{amssymb}
\usepackage[french, english]{babel}
\usepackage[utf8]{inputenc}
\usepackage{amsmath,amsfonts,amssymb,amsthm}
\usepackage[T1]{fontenc}
\usepackage{fullpage}
\usepackage{hyperref,enumitem}
\usepackage{mathrsfs}
\usepackage{relsize}
\usepackage{tikz,pgfplots,todonotes}
\usepackage{bm,bbm,cleveref,csquotes}
\usepackage{appendix,comment}
\usepackage{pifont}
\usepackage{soul}
\usetikzlibrary{patterns}

\usepackage{pgfplots}
\definecolor{darkgreen}{rgb}{0.0, 0.5, 0.0}

\newlength{\bibitemsep}
\newlength{\bibparskip}
\let\oldthebibliography\thebibliography
\renewcommand\thebibliography[1]{%
  \oldthebibliography{#1}%
  \setlength{\parskip}{\bibitemsep}%
  \setlength{\itemsep}{\bibparskip}%
}

\newcommand{\eps}{\varepsilon}

\newcommand{\dd}{\textup{d}}
\newcommand{\ee}{\textup{e}}

\newcommand{\R}{\mathbb R}

\newcommand{\cO}{\mathcal O}

\newcommand{\dens}[2]{\mathrm{dens}\left(#1,#2\right)}
\newcommand{\card}[1]{\left|\left\{ #1 \right\}\right|}

\newcommand{\One}{\bm 1}
\DeclareMathOperator{\Leb}{Leb}
\DeclareMathOperator{\dTV}{d_{TV}}
\DeclareMathOperator{\Av}{Av}
\DeclareMathOperator{\inv}{inv}
\DeclareMathOperator{\occ}{occ}
\DeclareMathOperator{\noninv}{noninv}
\DeclareMathOperator{\proj}{proj}
\DeclareMathOperator{\Dyck}{Dyck}

\newcommand{\PAvTDU}{\mathcal P_{\Av(321)}}
\newcommand{\PAvDTU}{\mathcal P_{\Av(231)}}
\newtheorem{thm}{Theorem}
\newtheorem{prop}[thm]{Proposition}
\newtheorem{cor}[thm]{Corollary}
\newtheorem{lem}[thm]{Lemma}

\theoremstyle{definition}

\newtheorem{rem}[thm]{Remark}

\newcommand{\lrtriangle}{{\begin{tikzpicture}[scale=.4]
  \draw (0,0) -- (0.5,0) -- (0.5,0.5);
  \draw[thick,dotted] (0,0) -- (0.5,0.5);
\end{tikzpicture}}}
\newcommand{\ultriangle}{{\, \begin{tikzpicture}[scale=.4]
  \draw (0,0) -- (0,0.5) -- (0.5,0.5);
  \draw[thick,dotted] (0,0) -- (0.5,0.5);
\end{tikzpicture}}}

\title[Pattern-avoiding Mallows permutations]
{Large deviation principles\\ for pattern-avoiding permutations,\\
and limit shapes for constrained Mallows permutations}
\author{Thomas Budzinski}
\address{ENS de Lyon, CNRS, UMPA}
\email{thomas.budzinski@ens-lyon.fr}
\author{Victor Dubach}
\address{Department of Mathematics, Uppsala University, Sweden}
\email{victor.dubach@math.uu.se}
\author{Valentin Féray}
\address{Université de Lorraine, CNRS, IECL, F-54000 Nancy, France}
\email{valentin.feray@univ-lorraine.fr}
\author{Mohamed Slim Kammoun}
\address{Université de Poitiers, LMA, France}
\email{skammoun@math.univ-poitiers.fr}
\author{Mylène Maïda}
\address{Universit\'e de Lille, CNRS, Inria, UMR 8524 - Laboratoire Paul Painlev\'e, F-59000 Lille, France}
\email{mylene.maida@univ-lille.fr}
\date{}

\keywords{Random permutations, pattern-avoiding permutations, permutons, large deviation principles, Dyck paths}

\subjclass{05A05, 05A19, 60C05, 60F10}

\begin{document}

\maketitle
\begin{abstract}
    We study Mallows random permutations conditioned to avoid a given pattern $\alpha$ of length~$3$.
    When the bias parameter is of the form $\ee^{\beta/n}$, we prove that these permutations converge to a non-trivial explicit deterministic permuton that depends on the pattern $\alpha$ and
    on the parameter $\beta$.
    Along the way, we provide parametrizations for $\alpha$-avoiding permutons, and establish a large deviation principle for uniform $\alpha$-avoiding permutations.
	As a byproduct of the proof, we also obtain asymptotic estimates of two versions of $q$-Catalan numbers
	in the regime $q=\ee^{\beta/n}$.
\end{abstract}

\begin{figure}[h]
    \centering
    \includegraphics[scale=.41]{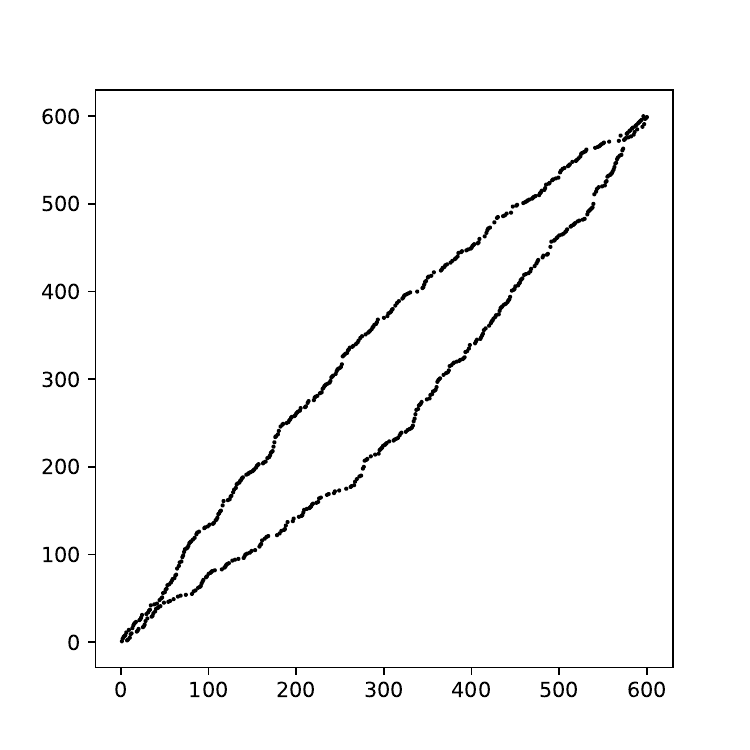}
    \includegraphics[scale=.41]{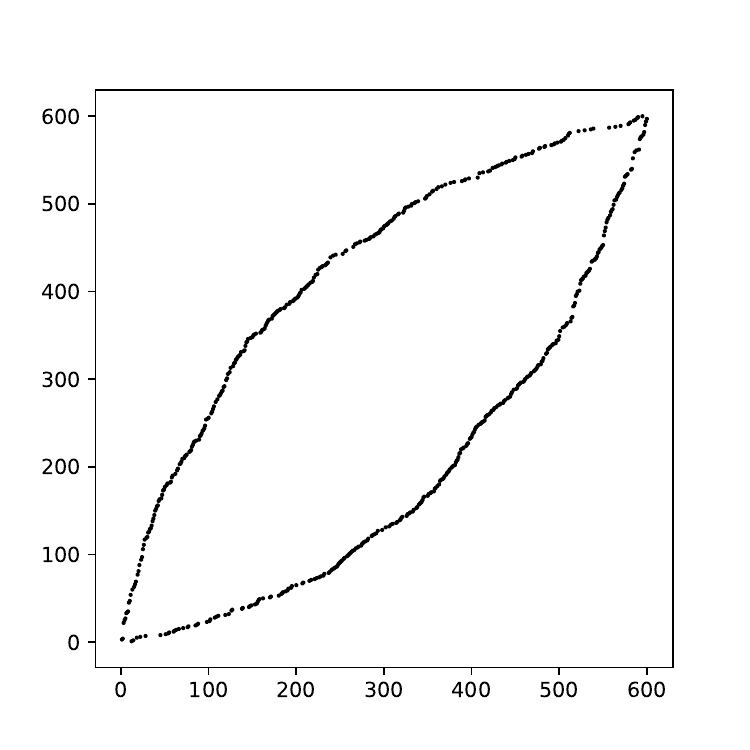}
    \includegraphics[scale=.41]{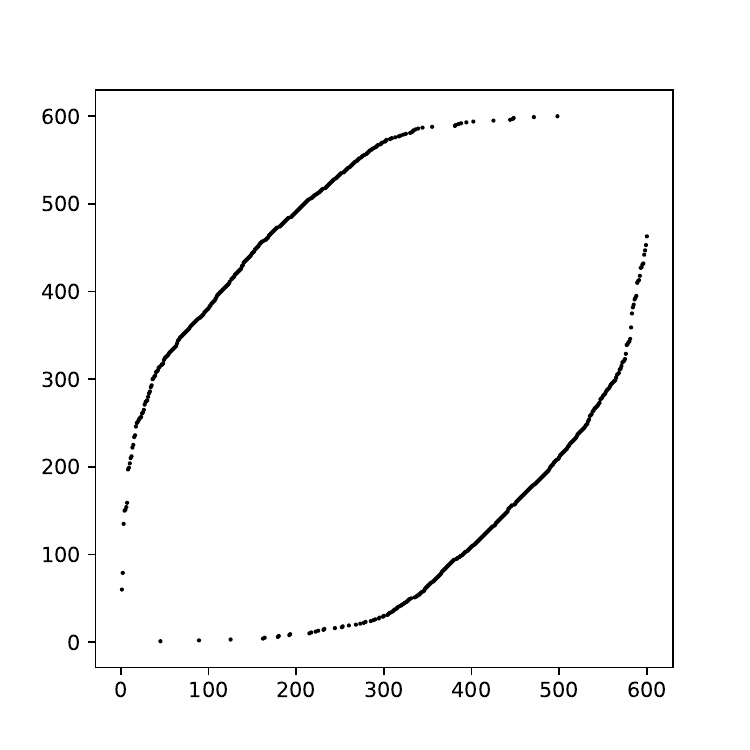} \\
    \includegraphics[scale=.41]{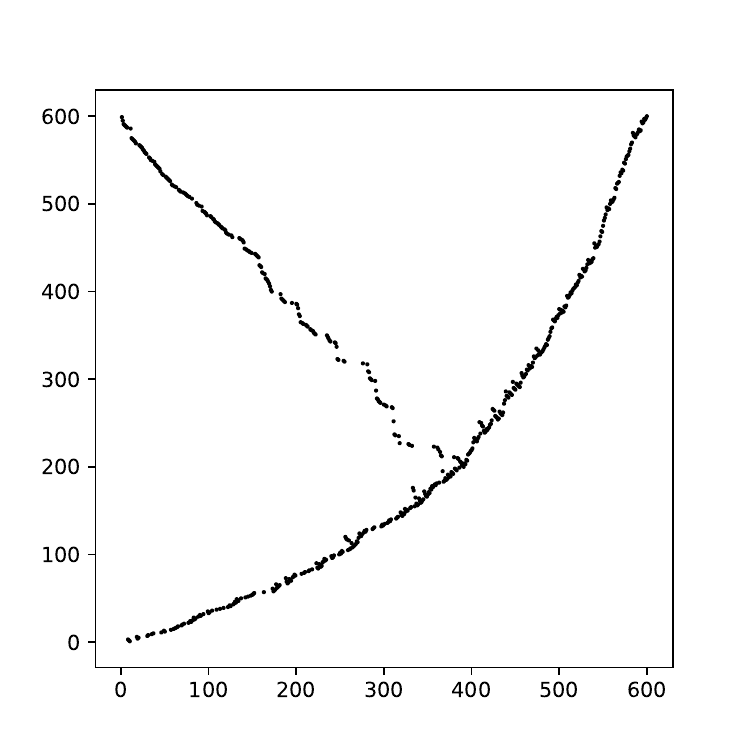}
    \includegraphics[scale=.41]{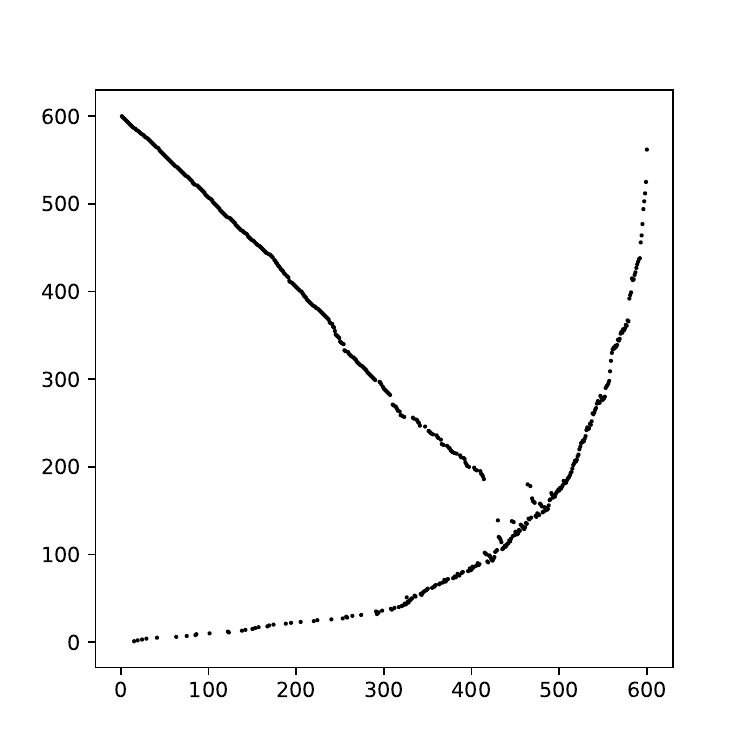}
    \includegraphics[scale=.41]{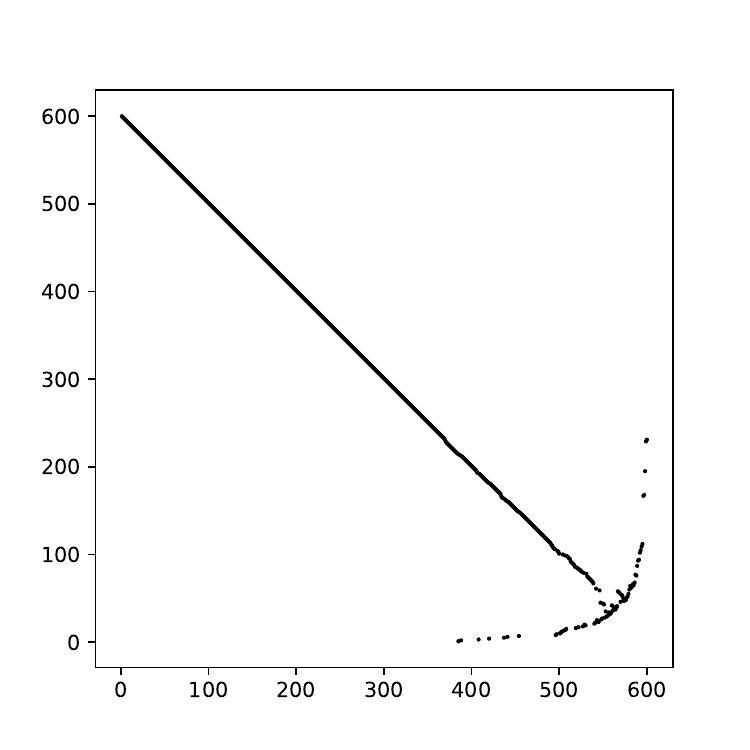}
    \caption{Top line: 
    simulations of $321$-avoiding Mallows permutations with size $600$ and bias parameter $q=\ee^{\beta/n}$, for $\beta\in\{1,3,12\}$. 
    Bottom line: 
    likewise with $231$-avoiding Mallows permutations. 
    Here, and throughout the paper, a permutation $\sigma$ is represented graphically by the set $\{(i,\sigma(i)), 1 \le i \le n\}$.}
    \label{fig:simulations_Constraint_Mallows}
\end{figure}
\newpage

\section{Introduction}

\subsection{Overview}
\label{ssec:background}
{\em Mallows/pattern-avoiding random permutations.}
Random permutations are among the most standard objects studied in combinatorial theory,
with notable applications in statistics, analysis of algorithms, and statistical physics.
While earlier works have focused on uniform random permutations of a given size,
some non-uniform models have become popular, in particular the so-called Mallows model,
introducing a bias depending on the number of inversions of the permutations.
This model has been introduced by Mallows in 1957 \cite{mallows1957} in the context of statistical rankings,
and has been studied under many aspects in the last twenty years, see e.g.~\cite{dubach2024mallows} for references.
In another direction, a number of recent research works analyze uniform random permutations
subject to some constraints. 
The most common constraints consist in fixing the cycle type 
(see e.g. \cite{feray2024clt_conjugacy,dubach2025geometric,hamaker2025moments}),
or in avoiding some patterns. 
The latter line of research originates from articles of Madras, Pak and co-authors 
and has grown significantly in recent years; 
see e.g. \cite{madras2010random_pattern_avoiding,MinerPak,hoffman2019scaling,bassino2022scaling,borga2022strong-semi-baxter}.
\bigskip

Our first motivation for this work is to combine both points of view,
i.e., to study Mallows random permutations constrained to avoid some patterns.
In this paper, we focus on avoiding a fixed pattern of length 3,
and refer to this as \emph{constrained Mallows permutations}. 
We focus on $321$ (resp.~$231$)-avoiding permutations, since the study of $\alpha$-avoiding permutations for other patterns $\alpha$ of size 3 can be reduced to one of these two cases by symmetry. 
We recall that a permutation $\sigma$ avoids $321$ if it does not contain any decreasing subsequence of length $3$, and it avoids $231$ if there are no indices $i<j<k$ such that $\sigma(k) < \sigma(i) < \sigma(j)$. 
Also, we denote by $\inv(\sigma)$ the number of inversions of $\sigma$. 
For $\beta \in \R$, $\alpha$ in $\{231,321\}$ and $n \ge 1$, we let $\tau^{\beta,\alpha}_{n}$ be a random permutation of size $n$ such that, for any $\alpha$-avoiding permutation $\sigma$:
\begin{equation}\label{eq:def_Constraint_Mallows}
 \mathbb P(\tau^{\beta,\alpha}_{n} = \sigma) = \frac{\ee^{\frac{\beta}{n}\inv(\sigma)}}{Z_n^{\beta,\alpha}},
\end{equation}
where $Z_n^{\beta,\alpha} := \sum_{\sigma} \ee^{\frac{\beta}{n}\inv(\sigma)}$ is the appropriate normalization factor
(the sum is taken over all $\alpha$-avoiding permutation $\sigma$).
This is the Mallows distribution with parameter $q_n=\ee^{\beta/n}$ conditioned to avoid $\alpha$. 
The reason why we chose this scaling for $q_n$ will be explained after the statements of the main results.
\Cref{fig:simulations_Constraint_Mallows} shows samples of $\tau^{\beta,231}_{n}$
and $\tau^{\beta,321}_{n}$ which were obtained by simulating an appropriate Markov chain,
as explained in Appendix~\ref{sec: simulations by reversible MC}.
Upon looking at these simulations, one might guess that a non-trivial permuton limit exists for this model;
see \Cref{sec: permuton limits} for precise statements to that regard.
\medskip

As far as we are aware of, this question has not yet been investigated, although related models and questions
have appeared in the literature.
First, let us mention the work \cite{gorin2024six-vertex} of Gorin and Kenyon, motivated by the six vertex model
of statistical physics, who analyzed random Mallows permutations with a different type of constraints,
namely that the diagram of the permutation avoids some designated regions.
In another direction, Chelikavada and Pranzo \cite{chelikavada2023fixed_point_bias}, and more recently Park and Rizzolo \cite{park2025limittheoremsfixedpoint},
  studied pattern-avoiding permutations biased by their number of fixed points.
Lastly, in \cite{pinsky2021avoidance_Mallows}, Pinsky gives a number of results regarding the probability
that a Mallows random permutations avoids a pattern of size 3, 
but he does not study the resulting conditional model as we do here.
\bigskip

{\em Permutons, and large deviation principles.}
Among many aspects of random Mallows permutations avoiding some patterns,
we focus here on finding the scaling limit of such objects, in the sense of permutons
(see \Cref{ssec:permutons} for background on permutons).
A good tool for this are large deviation principles (LDPs), which are useful when looking for
the limit shapes of random objects distributed
according to biased distributions, such as random Mallows permutations.
It indeed transforms the probabilistic problem into some optimization problem on a space of analytic objects (in our case, permutons). See \cite{starr2009mallows,mukherjee2016exponential_permutations,kenyon2020fixed_densities} for applications of this principle to some random permutation models,
in particular to (unconditioned) Mallows random permutations.
The space of permutons, which are probability measures on $[0,1]^2$,
 is known to be a natural framework to state
an LDP for uniform random permutations \cite{trashorras2008large_deviations,kenyon2020fixed_densities}. 
However, this ``classical'' LDP cannot be used to study models of pattern-avoiding permutations, since the rate function is equal to $\infty$ on all permutons avoiding a given pattern\footnote{
This should not come as a surprise, since the number of permutations avoiding a given pattern behaves as $O(C^n)/n!$ by the Stanley--Wilf--Marcus--Tardös theorem (see, e.g., \cite[Chapter 4]{BonaLivre}).
}.
\bigskip

{\em Our contributions.} 
We overcome this problem by deriving specific LDPs for uniform $321$ (resp.~$231$)-avoiding permutations.
A first step in this direction is to find appropriate parametrizations of $321$ (resp.~$231$)-avoiding permutons.
Once we have found these parametrizations and established the associated LDPs,
we can translate the problem of finding the scaling limit of $321$ (resp.~$231$)-avoiding
Mallows permutations to an analytic optimization. 
It turns out that, in both cases,
this optimization problem can be solved explicitly,
yielding an explicit description of the limit shapes.
In addition to the limit shape results, solving these optimization problems
also yields asymptotic estimates for the partition functions $Z_n^{\beta,\alpha}$ (both for $\alpha=321$ and $231$).
This complements previous results of Pinsky
in the regime where the bias parameter $q$ is constant~\cite{pinsky2021avoidance_Mallows}
(recall that for us, $q=\exp(\beta/n)$).

In the remaining part of the introduction, we provide more details on these three steps 
(parametrization of pattern-avoiding permutations, LDPs, and limit shapes for constrained Mallows permutations),
which we believe are all interesting in their own right.
\medskip

\begin{rem}
The normalization constant $Z_n^{\beta,\alpha}$ appearing in \eqref{eq:def_Constraint_Mallows} is the generating polynomial of inversions for 231, resp.~321, avoiding permutations.
In the 231-avoiding case, the classical decomposition of 231-avoiding permutations around their maxima yields the following recurrence
\[Z^{\beta,231}_{n+1}= \sum_{i=0}^n q^{i} Z^{\beta,231}_i Z^{\beta,231}_{n-i},\]
where $q=\ee^{\beta/n}$ as usual. This is the same recurrence as the $q$-Catalan numbers defined as 
the area generating function for Dyck paths; see, e.g., \cite[Chapter 1]{haglund2008qt_catalan} (see also Proposition~\ref{prop: inv of 231 avoiding permutation} for a direct connection between inversions in 231-avoiding permutations
and Dyck path area).
Moving on now to $\alpha=321$, the quantity $Z^{\beta,321}_{n}$ also satisfies some recurrence equation, 
though this is harder to prove, see~\cite{cheng2013inversions321avoiding}.
We shall not need these recurrences in this paper.
\end{rem}\medskip

\subsection{First main results: parametrizations of 231 or 321-avoiding permutons}
\label{ssec:parametrization}
We recall that a permuton is a probability measure on the square $[0,1]^2$ with both marginals equal to the Lebesgue measure on $[0,1]$. 
We refer to Section~\ref{ssec:permutons} below for related precise definitions.
While there are many works studying the structure of the set of $\alpha$-avoiding permutations (see, e.g., \cite{vatter2015survey}),
it seems that the question of describing $\alpha$-avoiding permutons has not been looked up so far.
We open the way here by giving natural parametrizations of $231$ (resp.~$321$)-avoiding permutons.
Given a pattern $\alpha$ and a permuton $\mu$, we say that $\mu$ is $\alpha$-avoiding
if the density of $\alpha$ in $\mu$ is equal to $0$. Equivalently, $\mu$ is $\alpha$-avoiding
if and only if it is the limit of a sequence of $\alpha$-avoiding permutations.

Before stating our results, let us mention some related literature.
In the realm of graphs, a series of papers find natural parametrizations of graphons
that are limits of graphs in some specific classes, namely threshold graphs~\cite{diaconis2008threshold_graphons}, 
interval graphs~\cite{diaconis2013interval_graphons}, and string graphs~\cite{janson2017string_graphons}.
In some sense, we are looking at the analogue problems for permutations.
In another direction, in a recent paper~\cite{garbe2024pattern_avoiding_permutons},
Garbe, Hladk{\'y}, Kun and Pek{\'a}rkov{\'a} have looked at some general properties of pattern-avoiding permutations,
but were not interested in a complete parametrization of them as we are. 
Finally, a number of papers consider permuton limits of uniform random permutations avoiding some fixed patterns 
(see, e.g., \cite{bassino2022scaling,borga2022strong-semi-baxter,hohmeier2025permuton_limits} and references in these papers). 
While these limits necessarily are pattern-avoiding permutons, 
this is of course a different question than trying to describe the set of {\em all} permutons avoiding given patterns.
\medskip

We first state our parametrization for $231$-avoiding permutons,
as it may seem more natural to readers familiar with pattern-avoiding permutations.
Indeed, it is standard that $231$-avoiding permutations are entirely determined
by the positions and values of their right-to-left minima
(abbreviated ``RLM'' from here on out); see, e.g., \cite[Chapter 4]{BonaLivre}.
Let us define the RLM curve of a permuton $\mu$ by
\[
    f_\mu: x\in[0,1] \mapsto \max\{ y\in[0,1] : \mu([x,1]\times[0,y])=0 \} \,.
\]
Necessarily, $f_\mu$ is a càdlàg nondecreasing function with $f_\mu(x) \le x$ and $f_\mu(1)=1$.
From there, it will be convenient to define  $\varphi_\mu$ as the function whose graph is obtained by
rotating the graph of $f_\mu$ by $-\pi/4,$ flipping it and scaling it down by a factor $\sqrt 2$ and call it the RLM-excursion of the permuton $\mu$ 
(see Figure~\ref{fig:parametrization}, left).
The function $\varphi_\mu$ belongs to the space of {\em excursions}, defined as
\begin{equation}\label{eqn:defn_F}
	\mathcal{E} := \left\{ \varphi : [0,1] \to \R^+ \;:\; \mbox{$\varphi$ is $1$-Lipschitz and } \varphi(0)=\varphi(1)=0 \right\}.
\end{equation}


Our first main result for $231$ is the following:
\begin{prop}
\label{prop:intro-param231}
The map sending a $231$-avoiding permuton to its RLM-excursion is one-to-one and bicontinuous.
\end{prop}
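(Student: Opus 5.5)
The plan is to construct an explicit inverse map $\Psi:\mathcal E\to\{\text{$231$-avoiding permutons}\}$ and to obtain bicontinuity from compactness. By Arzel\`a--Ascoli, $\mathcal E$ with the uniform norm is compact. The set of $231$-avoiding permutons is closed in the compact space of permutons, since pattern densities are continuous. So once $\mu\mapsto\varphi_\mu$ is shown to be a continuous bijection onto $\mathcal E$, the inverse is automatically continuous, because a continuous bijection from a compact space to a Hausdorff space is a homeomorphism.

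\emph{Step 1: the formula determining $\mu$ from $f_\mu$.} I would first work at the discrete level. A $231$-avoiding permutation is encoded by a Dyck path built from its RLM structure, in a way compatible with the definition of $f_\mu$: the RLM curve, rotated by $-\pi/4$, is essentially the rescaled Dyck path. In this encoding, a point $(i,\sigma(i))$ corresponds to an up-step, and $\sigma(i)$ is read off from the matching down-step, i.e.\ the first return of the path to the same level. Passing to the continuum, the candidate $\Psi(\varphi)$ is defined as follows.
\begin{itemize}
\item On intervals where $\varphi$ is constant, $f$ has slope $1$, and the mass of $\Psi(\varphi)$ lies on the graph of $f$.
\item Otherwise, a point on an increasing part of $\varphi$ at level $h$ is matched with the first later time at which $\varphi$ comes back down to $h$. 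This defines a measurable map $t\mapsto (X(t),Y(t))$, and $\Psi(\varphi)$ is the push-forward of Lebesgue measure under it.
\end{itemize}
I would then check two properties of $\Psi(\varphi)$. First, it has uniform marginals: the up-parts and down-parts have equal Lebesgue measure level by level. Second, it is $231$-avoiding: matched pairs are nested or disjoint, exactly as in the discrete bijection, which rules out a $231$ configuration. Finally I would check that its RLM-excursion is $\varphi$, which gives surjectivity.

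\emph{Step 2: injectivity.} Let $\mu$ be a $231$-avoiding permuton with RLM curve $f$. I would show that $\mu=\Psi(\varphi_\mu)$ by computing $\mu([0,x]\times[0,y])$ for all $x,y$.
\begin{itemize}
\item If $y\le f(x)$, then $\mu([x,1]\times[0,y])=0$ by definition of $f$, so $\mu([0,x]\times[0,y])=y$.
\item If $y>f(x)$, I would use $231$-avoidance in the following form. Take any point of the support in $[0,x]\times(f(x),1]$. It lies above some mass in $[x,1]\times[0,y]$ that sits to its right. Avoidance then forbids support points to its right with values between the two, so the mass in $[0,x]\times(f(x),y]$ is forced to be arranged as the matching prescribes.
\end{itemize}
I expect this to be the main obstacle, because one must handle permutons with atoms-free but possibly singular structure. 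My first approach would be approximation. Sample $\mu$-random permutations $\sigma_n$, which are $231$-avoiding almost surely and converge to $\mu$. The discrete formula $\sigma_n=\Psi_n(\text{Dyck path of }\sigma_n)$ holds exactly, and then one passes to the limit. For this, I would need that the rescaled Dyck paths of $\sigma_n$ converge uniformly to $\varphi_\mu$, and that $\Psi$ is continuous.

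\emph{Step 3: continuity of $\Psi$ (equivalently of $\mu\mapsto\varphi_\mu$).} I would test the continuity of $\Psi$ on rectangles $[0,x]\times[0,y]$. These masses can be written via level-set (local time–free) quantities of $\varphi$, namely Lebesgue measures of sets of the form $\{t\le s:\ \varphi(t)\ge \min_{[t,s']}\varphi\}$, and such quantities are continuous under uniform convergence of $1$-Lipschitz functions, except at a negligible set of levels. Combined with Steps 1--2 and compactness, this would give that the map of Proposition~\ref{prop:intro-param231} is a homeomorphism onto $\mathcal E$.
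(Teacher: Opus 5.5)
\textbf{Gap in Step 1: the inverse map is wrong.} Your overall architecture is sound and is the paper's: build an explicit inverse, then get bicontinuity from compactness of $\mathcal E$. The problem is the inverse you write down in Step~1. It fails whenever $\varphi$ has slopes strictly between $-1$ and $1$ other than $0$.

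Take $\varphi(t)=a\min(t,1-t)$ with $0<a<1$. It has no interval of constancy, so under your rule all the mass comes from increasing times $t\le\frac12$. Their abscissae $t+\varphi(t)$ only cover $[0,\frac{1+a}{2}]$, so positions in $(\frac{1+a}{2},1]$ receive no mass and $\Psi(\varphi)$ is not a permuton. (Likewise, a push-forward of $\mathrm{d}t$ under $t\mapsto t+\varphi(t)$ has $x$-density $1/(1+\varphi')$, not $1$.)

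The missing idea is that a macroscopic slope $\varphi'(t)=a\in(-1,1)$ hides up- and down-steps in proportions $\frac{1+a}{2}$ and $\frac{1-a}{2}$, and most of them are matched at microscopic distance. The correct definition is:
\begin{itemize}
\item put mass $(1-|\varphi'(t)|)\,\mathrm{d}t$ on the graph point $(t+\varphi(t),\,t-\varphi(t))$;
\item send only mass $2(\varphi'(t))^+\,\mathrm{d}t$ to $(t+\varphi(t),\,s(t)-\varphi(t))$, where $s(t)$ is the first return to level $\varphi(t)$.
\end{itemize}
With this, both marginals are uniform by a level-by-level computation, and $\Psi(\varphi_\sigma)=\mu^\diagdown_\sigma$ exactly for discrete Dyck paths. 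This is not a side case. The paper's minimizer $\varphi_\beta$ has $0<|\varphi_\beta'|<1$ almost everywhere, and $\mu^{231}_\beta$ carries density $\min(1,f_\beta')$ on the graph and $\max(0,1-f_\beta')$ off it. Your construction would put no mass on the graph at all.

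\textbf{Circularity in Steps 2--3.} As framed, Step~2 needs $\varphi_{\sigma_n}\to\varphi_\mu$, i.e.\ continuity of $\mu\mapsto\varphi_\mu$. Step~3 only derives that continuity from bijectivity, so the argument is circular. It can be repaired without it:
\begin{itemize}
\item extract $\varphi_{\sigma_{n_k}}\to\varphi^*$ by compactness of $\mathcal E$;
\item continuity of $\Psi$ together with $\Psi(\varphi_{\sigma_n})=\mu^\diagdown_{\sigma_n}\to\mu$ gives $\mu=\Psi(\varphi^*)$;
\item the identity $\varphi_{\Psi(\varphi)}=\varphi$ from Step~1 then gives $\varphi^*=\varphi_\mu$, hence $\mu=\Psi(\varphi_\mu)$.
\end{itemize}

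\textbf{Continuity of $\Psi$ is not established.} Step~3 asserts it rather than proves it, and the ``Lebesgue measures of level sets'' you propose are not the natural quantities. The paper's key formula is that for $(x,y)$ weakly above the graph,
$\mu([x,1]\times[0,y])=y-x+\min_{G_f(x,y)}(x'-y')$.
This is the $L^1$-distance from $(x,y)$ to the completed graph $G_f$. It is visibly continuous under Hausdorff convergence of graphs, which is equivalent to uniform convergence of $\varphi$.

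\textbf{How the paper avoids your detour.} The paper takes this formula as the \emph{definition} of $\mu_f$, extended to a measure via Carath\'eodory. It proves $231$-avoidance through an elementary inequality on infima over $A\cup B$ and $A\cap B$. It proves uniqueness directly, by showing that a $231$-avoiding $\nu$ with RLM curve $f$ puts no mass beside the excursion squares. So it needs neither the matching construction nor an approximation argument.
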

We prove this result in \Cref{sec:structure_231} (see Lemmas~\ref{lem:231_Uniqueness} and~\ref{lem:bicontinuity}) by constructing explicitly the inverse map,
i.e., by explaining how to reconstruct a $231$-avoiding permuton given its RLM-curve.
Interestingly, this reconstruction procedure does not mimic the standard procedure
to reconstruct $231$-avoiding permutations from their RLM.
Indeed, the latter is recursive and difficult to imitate in the continuous setting.
Instead we provide a direct formula for the cumulative distribution function of the permuton,
see Lemma~\ref{lem:construction_muf}.
\medskip

We now turn our attention to the pattern $321$.
Again, it is well-known that $321$-avoiding permutations are determined by the positions and values of their
RLM (see e.g. \cite[Chapter 4]{BonaLivre}).
However, it is not the case for permutons: 
it turns out that there are multiple $321$-avoiding permutons with the same RLM-curve, as illustrated by \Cref{fig: counter-example RLM mass}, and that the map sending a $321$-permutons to its RLM-curve is not continuous. 
This prevents from encoding the set of $321$-avoiding permutons by the set $\mathcal E$ of~\eqref{eqn:defn_F}.

Instead, we will show that $321$-avoiding permutons can be parametrized by pairs $(\pi_1,\pi_2)$ of measures
living on the space
\begin{equation}
\label{def:D}
\mathcal D := \left\{ (\pi_1, \pi_2) \in \mathcal M([0,1])^2 \;:\;
\begin{array}{ll} \pi_1 \leq \Leb, \, \pi_2 \leq \Leb,\,
\pi_1([0,1]) = \pi_2([0,1])\\ \text{and }\forall x\in [0,1], \pi_1([0,x]) \le \pi_2([0,x])
\end{array}\right\},
\end{equation}
where $\mathcal M([0,1])$ is the space of finite measures on $[0,1]$, equipped with the topology of weak convergence. In other words, $\mathcal D$ is the space of pairs of measures on $[0,1]$ with the same total mass, both subuniform\footnote{
    A measure $\pi$ on $[0,1]$ is said to be {\em subuniform} if, for any  measurable $A \subseteq [0,1]$, one has $\pi(A)  \le \Leb(A)$.
},
and such that $\pi_1([0,x]) \le \pi_2([0,x])$ for all $x$ in $[0,1]$.
Informally, $\pi_1$ and $\pi_2$ are the projections on the horizontal and vertical axes
of the subdiagonal part of the 321-avoiding permuton $\mu$;
this is a continuous analogue of the counting measure of the positions and values of the right-to-left minima of a 321-avoiding permutation $\sigma$ (see Figure \ref{fig:parametrization}, right).

We shall prove the following statement, which is a simplified version of Proposition~\ref{prop:321-avoiding-permutons}.
\begin{prop}
\label{prop:intro-param321}
There exists an explicit continuous surjection $\Psi$ from $\mathcal D$ to the set of $321$-avoiding permutons.
\end{prop}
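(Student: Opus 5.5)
We construct $\Psi$ explicitly from the discrete picture. A $321$-avoiding permutation is the union of two increasing subsequences. The points on or below the diagonal are the right-to-left minima, and the points strictly above the diagonal are the remaining ones. Given $(\pi_1,\pi_2)\in\mathcal D$, we transport $\pi_1$ onto $\pi_2$ by the unique nondecreasing coupling. Writing $F_i(x)=\pi_i([0,x])$, we let $\mu_{\mathrm{low}}$ be the pushforward of $\pi_1$ under $x\mapsto (x, F_2^{-1}(F_1(x)))$, with generalized inverse. The inequality $F_1\le F_2$ is exactly the condition that this curve lies weakly below the diagonal $y=x$. Symmetrically, we couple the complementary measures $\Leb-\pi_1$ and $\Leb-\pi_2$, which are nonnegative by subuniformity and have equal mass, by the increasing coupling, and call the result $\mu_{\mathrm{up}}$. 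Since $\int (1-\dd\pi_1)\le\int(1-\dd\pi_2)$ on each $[0,x]$, the support of $\mu_{\mathrm{up}}$ lies weakly above the diagonal. We set $\Psi(\pi_1,\pi_2)=\mu_{\mathrm{low}}+\mu_{\mathrm{up}}$.

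\textbf{Step 1: $\Psi$ lands in $321$-avoiding permutons.} The marginals are $\pi_1+(\Leb-\pi_1)=\Leb$ and likewise for the second coordinate, so $\Psi(\pi_1,\pi_2)$ is a permuton. Its support is contained in the union of two weakly increasing sets (monotone couplings are supported on monotone graphs). Three points forming a $321$ would force two of them into the same increasing chain, which is impossible. The only exception is two points on a common vertical or horizontal segment, and these carry zero pattern density because the marginals are Lebesgue. Hence the density of $321$ is zero.

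\textbf{Step 2: continuity.} Increasing couplings depend continuously on their marginals in the weak topology. The increasing coupling of $\nu,\nu'$ is the law of $(G_\nu(U),G_{\nu'}(U))$ with $U$ uniform on the total mass, and the quantile functions converge almost everywhere when $\nu_n\to\nu$. Total masses converge as well since $[0,1]$ is compact. Applying this to both pairs $(\pi_1,\pi_2)$ and $(\Leb-\pi_1,\Leb-\pi_2)$ gives weak continuity of $\Psi$.

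\textbf{Step 3: surjectivity, which is the main obstacle.} Given a $321$-avoiding permuton $\mu$, we first show that it is a limit of $321$-avoiding permutations $\sigma_n$. This uses the equivalence recalled above between zero pattern density and being a limit of avoiding permutations. For each $\sigma_n$ we take the empirical measures $\pi_1^n,\pi_2^n$ of positions and values of its right-to-left minima, rescaled and smoothed into subuniform measures by spreading each atom over an interval of length $1/n$. These lie in $\mathcal D$, since the $k$-th smallest RLM value is at most its position. We then check that $\Psi(\pi_1^n,\pi_2^n)$ is close to the permuton of $\sigma_n$: the RLMs form an increasing sequence, the other points form an increasing sequence, and the increasing coupling reproduces each one up to $O(1/n)$ errors. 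The space $\mathcal D$ is compact, because subuniformity together with the closed conditions passes to weak limits. So a subsequence $(\pi_1^n,\pi_2^n)$ converges, and continuity of $\Psi$ gives $\Psi(\lim)=\mu$.

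The delicate point is verifying that the approximation error in the discrete-to-continuous step is uniform. In particular, we must handle points of $\sigma_n$ near the diagonal whose classification as RLM or not is unstable. This is precisely why $\Psi$ is only surjective and not injective.
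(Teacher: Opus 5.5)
Your definition of $\Psi$ and your Steps 1--2 match the paper's. Your surjectivity argument takes a genuinely different, softer route, and it works.

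\textbf{Your route.} The key fact is in fact an exact identity. For a $321$-avoiding $\sigma$, the right-to-left minima are exactly the points weakly below the diagonal, and the remaining points form an increasing subsequence. The monotone couplings therefore match the $j$-th length-$1/n$ interval of positions linearly with the $j$-th interval of values, so $\Psi(\pi_1^\sigma,\pi_2^\sigma)=\mu^{\diagup}_\sigma$. Approximating $\mu$ by avoiding permutations, extracting a convergent subsequence in the compact set $\mathcal D$, and using continuity of $\Psi$ then produces a preimage.

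\textbf{The paper's route.} The paper works directly on the permuton. Lemma~\ref{lem:incr_supports} uses the uniform marginals to show that the supports of $\mu_{\lrtriangle}+\mu_{\diagup}$ and $\mu_{\ultriangle}+\mu_{\diagup}$ are nondecreasing. Uniqueness of the nondecreasing coupling then gives $\Psi(\proj(\mu_{\lrtriangle}+\nu))=\mu$ for every $0\le\nu\le\mu_{\diagup}$, and shows that every preimage has this form.

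\textbf{What each buys.} Your route avoids that lemma, at the price of invoking the equivalence between zero $321$-density and being a limit of $321$-avoiding permutations, which the paper only cites as standard. It also yields only the existence of some preimage. The paper's route gives an explicit preimage and the full fiber description \eqref{eq:preimages}. That description is what makes the rate function of Corollary~\ref{cor:LDP_uniform_321} explicit, and it shows that non-injectivity comes exactly from mass on the diagonal.

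\textbf{Minor points.} The ``delicate point'' in your last paragraph is not actually there. The discrete error is a deterministic $O(1/n)$, namely the distance between $\mu_{\sigma_n}$ and $\mu^{\diagup}_{\sigma_n}$. The compactness argument also does not care which subsequential limit of $(\pi_1^n,\pi_2^n)$ arises, so instability of the classification near the diagonal is irrelevant. There are also two small slips, neither of which your argument uses:
\begin{itemize}
\item For $\mu_{\mathrm{up}}$ the inequality is reversed. It should read $x-F_1(x)\ge x-F_2(x)$, and that is what places the upper coupling weakly above the diagonal.
\item In Step 1 there is no exceptional case. Two points of a nondecreasing set never form a strict inversion, so the $321$ indicator vanishes identically on the cube of the support.
\end{itemize}
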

The description of $\Psi$ can be found in \Cref{sec:structure_321}.
We note that, unlike in the $231$-avoiding case,
the parametrization by $\mathcal D$ is not one-to-one.
Nevertheless, we will provide an explicit description of the fibers $\Psi^{-1}(\pi_1, \pi_2)$ (Proposition~\ref{prop:321-avoiding-permutons}),
and show that they are reduced to singletons whenever $\mu$ does not put any mass on the main diagonal of the unit square.

The above parametrizations of 231- and 321-avoiding permutons are illustrated on Figure~\ref{fig:parametrization}.
For further discussion on the differences between the two parametrizations, 
we refer the reader to Section \ref{sec:counterexamples}.
\begin{figure}[t]
\[\includegraphics[scale=.8]{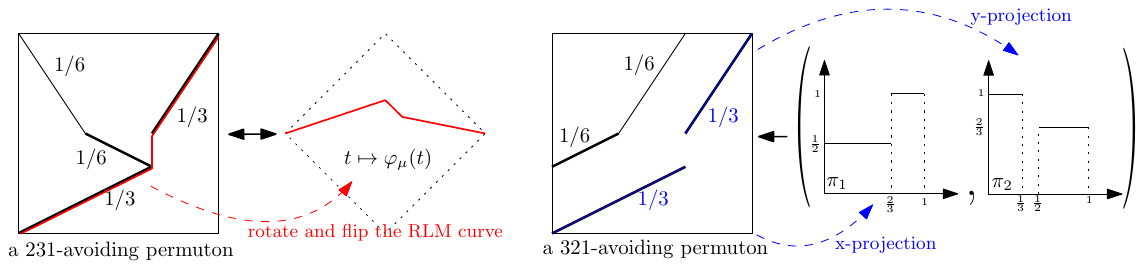}\]
\caption{Illustration of the parametrizations of 231- and 321-avoiding permutons.
321-avoiding permutons are in bijection with excursions, obtained by rotating and flipping their RLM curves (in red on the figure). On the other hand, there is a continuous surjection from pairs of measures to 231-avoiding permutons (the measures $\pi_1$ and $\pi_2$ are here represented by the graphs of their densities); informally, these measures are the $x$- and $y$-projections of the lower-diagonal part of the permuton (in blue on the figure).}
\label{fig:parametrization}
\end{figure}

\subsection{Second main results: large deviation principles for uniform 231 or 321-avoiding permutations}
\label{subseq:LDPintro}

As explained in Section~\ref{ssec:background}, an important step will be to establish LDPs for the random objects we are interested in, in the uniform case.

Roughly speaking, a sequence of random variables $(X_n)$ taking values in a Polish space $E$ satisfies an LDP at
speed $a_n$ with rate function $I$ if for any $x \in E,$ the probability $\mathbb P(X_n \simeq x)$ that $X_n$ is close to $x$ behaves as $\exp(-a_n I(x)),$ where $(a_n)$ is a sequence of real numbers going to infinity and $I$ is a lower semicontinuous nonnegative function (see Section \ref{ssec:LDP} for proper definitions).


An LDP for uniform random permutations in the space of permutons has been given and used in
\cite{trashorras2008large_deviations,mukherjee2016exponential_permutations,kenyon2020fixed_densities}.
An LDP for Mallows random permutations follows directly by a standard result
of large deviation theory, known as Varadhan's lemma; 
see, e.g., \cite[Theorem D.8]{anderson2010BookRandomMatrix}.
More recently, Borga, Das, Mukherjee and Winkler \cite{borga2024LDP_samples} have given an LDP for a large family 
of random permutations, obtained by sampling i.i.d.~random points from a probability measure in the plane.
In the present paper, we provide LDPs at speed $n$ for uniform $231$ (resp.~$321$)-avoiding permutations.
These LDPs take place respectively in the spaces $\mathcal E$ and $\mathcal D$ parametrizing $231$ and $321$-avoiding permutons introduced in the previous sections, and involve explicit  rate functions.
An LDP for random Mallows permutations conditioned to avoid $231$ (resp.~$321$) follows immediately (see Corollaries~\ref{cor:LDP_Mallows_321} and \ref{cor:LDP_Mallows_231}).
\medskip

Let us first describe the LDP that will be the cornerstone for the analysis of Mallows permutations avoiding $231.$ In view of Proposition \ref{prop:intro-param231}, it is natural to establish an LDP in the space of 1-Lipschitz functions. For a permutation $\sigma$ of size $n,$ let us denote by
$f_\sigma$ its normalized RLM-curve, defined as follows:

\[ f_\sigma(x) := \frac 1 n \left( \min\{\sigma(c) : c>nx\}-1\right)\]
and by $\varphi_\sigma$ the excursion obtained by rotating and flipping around the graph of $f_\sigma$ as on Figure~\ref{fig: example_231}.
By construction, $\varphi_\sigma$ is a continuous function on $[0,1]$, which is differentiable almost everywhere
with $\|\varphi_\sigma'\|\le 1.$ 
We endow the space $\mathcal E$ defined in~\eqref{eqn:defn_F} with the topology of the supremum norm, which makes it a compact space, and recall that all functions $\varphi\in\mathcal{E}$ are a.e.~differentiable on $[0,1]$ with derivative $|\varphi'| \le 1$.
Our LDP for uniform $231$-avoiding permutations reads as follows. 
For any $y \in [-1,1]$, we write
\begin{equation}\label{eqn:defn_J}
	J(y) := \frac12 (y+1)\log(y+1) + \frac12 (1-y)\log(1-y).
\end{equation}
\begin{prop}\label{prop:LDP_231_pi_intro}
  Let $H^{231}: \mathcal E \rightarrow [0,\infty]$ defined by
           \[H^{231}(\varphi) := 2 \int_{[0,1]} J(\varphi'(t)) \mathrm dt.\]
	For all $n \geq 1$, let $\sigma_{n,231}$ be a uniform random $231$-avoiding permutation of size $n$.
    Then the random variables $(\varphi_{\sigma_{n,231}})$ satisfy a large deviation principle at speed $n$ with rate function $H^{231}.$
\end{prop}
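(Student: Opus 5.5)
The plan is to reduce the statement to a large deviation principle for a uniformly random Dyck path, and then obtain that principle from Mogulskii's theorem for simple random walk. The reduction goes through the RLM encoding: a $231$-avoiding permutation of size $n$ is determined by the positions and values of its right-to-left minima. The curve $f_\sigma$ is a staircase, and after the rotation by $-\pi/4$ and rescaling it becomes a lattice path $\varphi_\sigma$ with steps $\pm 1/(2n)$ over time increments $1/(2n)$. In total there are $2n$ steps, giving a path in $\mathcal E$ that stays nonnegative because $f_\sigma(x)\le x$. This gives a bijection between $231$-avoiding permutations of size $n$ and Dyck paths of length $2n$. Hence $\varphi_{\sigma_{n,231}}$ is exactly the linearly interpolated, diffusively rescaled (time $2n$, space $2n$) uniform Dyck path. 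I would verify this bijection first, checking that the normalisations match the definition of $\varphi_\mu$ (the up-steps come from positions and the down-steps from values of RLM, or the reverse).

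Next, let $S$ be a simple random walk with $2n$ steps and set $X_n(t)=S_{2nt}/(2n)$. The Dyck-path law is the law of $X_n$ conditioned on the event $A_n=\{X_n\ge 0,\ X_n(1)=0\}$. Mogulskii's theorem gives an LDP for $X_n$ in $C([0,1])$ with the sup norm at speed $2n$. Its rate function is $\int_0^1 \Lambda^*(\varphi')$, where $\Lambda^*(y)=J(y)+\log 2$ is the Legendre transform of $\log\cosh$. At speed $n$ the rate becomes $2\int J(\varphi')+2\log 2$. The normalisation also fits: $\mathbb P(A_n)=\mathrm{Cat}_n/4^n=\ee^{-2n\log 2+o(n)}$. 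Subtracting, the conditioned rate should be $2\int_0^1 J(\varphi'(t))\,\mathrm dt$ on $\mathcal E$, and $+\infty$ off $\mathcal E$ (which is irrelevant, since all paths lie in the compact set $\mathcal E$). The function $H^{231}$ is convex, and it is lower semicontinuous for uniform convergence of $1$-Lipschitz functions, because $J$ is convex and continuous on $[-1,1]$. Compactness of $\mathcal E$ makes exponential tightness automatic. It therefore suffices to prove the weak LDP, namely local upper and lower bounds on sup-norm balls $B(\varphi,\delta)$.

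The conditioning cannot be taken directly from Mogulskii, because $A_n$ is a boundary event. I would instead do the ball estimates by hand, using combinatorial counting. For the upper bound, count Dyck paths inside $B(\varphi,\delta)$ by all $\pm1$ paths inside the tube. Split $[0,1]$ into $k$ intervals. The number of paths whose increment over each interval is approximately $\Delta_i$ is a product of binomials, which by Stirling is $\exp\!\big(2n\sum_i k^{-1}(\log 2 - J(k\Delta_i))+o(n)\big)$. Optimising over $\Delta_i$ within $\delta$-tubes and letting $\delta\to0$ and then $k\to\infty$ recovers $-2n\int J(\varphi')$ by convexity (Jensen) and lower semicontinuity. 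Dividing by $\mathrm{Cat}_n$ gives the bound. For the lower bound, given $\varphi\in\mathcal E$ with $H^{231}(\varphi)<\infty$, I would approximate $\varphi$ by a piecewise-linear $\tilde\varphi$ with slopes in $(-1,1)$. It should stay strictly positive on $[\eta,1-\eta]$ and have slopes $\pm$-bounded near the endpoints, and satisfy $H^{231}(\tilde\varphi)\le H^{231}(\varphi)+\epsilon$.

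The main obstacle is this lower bound near the endpoints and near zeros of $\varphi$, where positivity has to be enforced. On each linear piece, I count the bridges with the prescribed increment that stay within $\delta$ of the line. By the local CLT and Stirling, a constant or polynomial fraction of all such bridges do so, at subexponential cost. Wherever $\tilde\varphi$ is at least $\delta$, staying in the tube also keeps the path positive. Near $0$ and $1$, and near interior zeros of $\varphi$, I modify $\tilde\varphi$ to rise with slope close to $1$ over a short time $\eta$. This costs at most $\eta\cdot 2J(1)=2\eta\log 2$, which vanishes as $\eta\to0$. Continuity of $J$ on $[-1,1]$ then handles this approximation step.
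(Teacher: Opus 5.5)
Your proof is essentially correct but takes a more hands-on route than the paper. Your motivating paragraph also contains two false claims that happen to cancel.

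\textbf{The two compensating errors.} For Rademacher steps, $\Lambda^*(y)=\sup_\lambda(\lambda y-\log\cosh\lambda)$ is exactly $J(y)$ (for instance $\Lambda^*(0)=0$), not $J(y)+\log 2$. Also, $\mathbb P(A_n)=C_n/4^n\sim n^{-3/2}/\sqrt{\pi}$ is only polynomially small, not $\ee^{-2n\log 2+o(n)}$. Your explicit count, $4^n\exp(-2n\sum_i k^{-1}J(k\Delta_i))$ paths divided by $C_n$, is right only with the true asymptotics $C_n=4^{n+o(n)}$. So your estimates are fine, but your heuristic is not.

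\textbf{Upper bound.} Once these facts are corrected, no discretisation is needed. One has $\mathbb P(X_n\in B\mid A_n)\le \mathbb P(X_n\in B)/\mathbb P(A_n)=O(n^{3/2})\,\mathbb P(X_n\in B)$. Mogulskii's upper bound, applied to the closed set $B(\varphi,\varepsilon)\cap\mathcal E$, then concludes. This is exactly the paper's argument. Your remark that the conditioning cannot be taken directly from Mogulskii is true only for the lower bound.

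\textbf{Lower bound: the paper's route.} The paper keeps Mogulskii as a black box and uses a surgery map instead of explicit counting. For a walk in $B(\varphi,\varepsilon)$, it flips the first $m_n=-\min_k S_k$ down-steps into up-steps. It then flips the last $(S_{2n}+2m_n)/2$ up-steps into down-steps. The result is a Dyck path differing in at most $5\varepsilon n$ steps, so the rescaled path moves by at most $5\varepsilon$ in sup norm. Each Dyck path has at most $\sum_{k\le 5\varepsilon n}\binom{2n}{k}\le \ee^{c(\varepsilon)n}$ preimages, with $c(\varepsilon)\to 0$. This needs no approximation of $\varphi$ and no treatment of its zeros.

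\textbf{Lower bound: your route.} Your approach uses a piecewise-linear approximation, counts bridges in tubes piece by piece, and adds ramps to force positivity. It is self-contained, essentially reproving Mogulskii with the constraint built in, and it works. However, do the positivity step globally rather than near each zero of $\varphi$. A function in $\mathcal E$ can have infinitely many or accumulating zeros, and then local ramps need not fit or have small total cost.

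A clean choice is
\begin{itemize}
\item $\psi(t)=t$ on $[0,\eta]$,
\item $\psi(t)=\eta+(1-2\eta)\varphi\big(\frac{t-\eta}{1-2\eta}\big)$ on $[\eta,1-\eta]$,
\item $\psi(t)=1-t$ on $[1-\eta,1]$.
\end{itemize}
Then $\|\psi-\varphi\|_\infty=O(\eta)$, $\psi\ge\eta$ on the middle part, and $H^{231}(\psi)\le H^{231}(\varphi)+4\eta\log 2$. Interpolate $\psi$ linearly on the middle part; by Jensen this only lowers $H^{231}$. Finally, force all steps to be up, respectively down, on the two ramps.
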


We now state our result for $321$.
As explained above and illustrated in more details in Section \ref{sec:counterexamples} below,
we cannot use the excursion $\varphi_\sigma$ to characterize $321$-avoiding permutons. 
At the level of LDPs, this will require establishing a different LDP in this case, although the underlying discrete objects are related to Dyck paths in both cases.
For a $321$-avoiding permutation $\sigma$, we let $(\pi_1^\sigma,\pi_2^\sigma)$ be the rescaled occupation measures of positions and values of its strict right-to-left minima, see \Cref{ssec:bij321} for a precise definition.
The pair $(\pi_1^\sigma,\pi_2^\sigma)$ is an element of the space $\mathcal D$ parametrizing $321$-avoiding permutons, which we defined in \eqref{def:D}.
Then, the following holds.
\begin{prop}\label{prop:LDP_321_pi_intro}
  Let $H^{321}: \mathcal D \rightarrow [0,\infty]$ be defined as follows:
\[
        H^{321}(\pi_1, \pi_2) := \int_0^1 \!\! \left( \rho_1 \log \rho_1 + (1\!-\!\rho_1) \log (1\!-\!\rho_1)
                + \rho_2 \log \rho_2 + (1\!-\!\rho_2) \log (1\!-\!\rho_2) + 2 \log 2 \right) \mathrm{dLeb} ,
    \]
     where $\rho_1$ and $\rho_2$ are the densities of $\pi_1$ and $\pi_2$ with respect to the Lebesgue measure\footnote{
     These densities do exist since elements of $\mathcal D$ are pairs of subuniform measures.
     }.

	For all $n \geq 1$, let $\sigma_{n, 321}$ be a random permutation chosen uniformly among $321$-avoiding permutations of size~$n$.
	Then, the associated pairs $\left( \pi_1^{\sigma_{n, 321}}, \pi_2^{\sigma_{n, 321}} \right)$ of measures
    satisfy a large deviation principle on $\mathcal D$ at speed $n$ with rate function $H^{321}.$
\end{prop}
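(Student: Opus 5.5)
The plan is to encode a $321$-avoiding permutation by two binary words, prove a large deviation principle for these words by elementary counting, and then pass to the constrained set $\mathcal D$.

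\textbf{The encoding.} A $321$-avoiding permutation $\sigma$ of size $n$ is determined by the position set $P\subseteq[n]$ and value set $V\subseteq[n]$ of its strict right-to-left minima; these are the points on or below the diagonal that are not fixed points, or equivalently the excedance complement. Admissible pairs $(P,V)$ have $|P|=|V|$ and the domination condition that the $k$-th smallest element of $V$ is less than the $k$-th smallest element of $P$. These pairs are counted by Catalan numbers, so they are in bijection with Dyck-type objects. Under this encoding,
\[
\pi_1^\sigma=\frac1n\sum_{i\in P}\delta_{i/n}, \qquad \pi_2^\sigma=\frac1n\sum_{j\in V}\delta_{j/n}.
\]
Thus $(\pi_1^\sigma,\pi_2^\sigma)$ is the pair of rescaled empirical measures of two binary words $w_1=\One_P$ and $w_2=\One_V$ of length $n$. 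These words have the same number of ones, and the prefix condition $\pi_1([0,x])\le\pi_2([0,x])$ holds up to $O(1/n)$ boundary effects. Conversely, every pair of words satisfying the ballot-type condition should arise from exactly one permutation. I will check this against the precise definition in \Cref{ssec:bij321}; possibly the condition has to be made strict, with one unit of offset, which does not affect the asymptotics.

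\textbf{The unconstrained LDP.} Take $w_1,w_2$ to be independent uniform words in $\{0,1\}^n$. For weakly close measures, the LDP for the pair of empirical occupation measures is a standard block-counting (Mogulskii/Sanov-type) statement on the space of subuniform measures. The rate function is
\[
\sum_{i=1}^{2}\int_0^1 \Bigl(\rho_i\log\rho_i+(1-\rho_i)\log(1-\rho_i)+\log 2\Bigr)\,\mathrm{dLeb}.
\]
It can be derived by hand: split $[0,1]$ into $m$ blocks, count words whose number of ones in each block is prescribed with Stirling's formula, and let $m\to\infty$. The number of words is $4^n$, and the number of $321$-avoiding permutations is $C_n=4^{n+o(1)}$. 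So conditioning on the $321$-avoiding event costs only subexponentially, and this accounts exactly for the $2\log2$ term in $H^{321}$. It therefore remains to prove the LDP for the law of the uniform word pair restricted to the event $A_n$ that the pair is admissible. The upper bound is immediate from the unconstrained LDP, because $\mathbb P(A_n)=e^{o(n)}$ and $\mathcal D$ is closed.

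\textbf{The lower bound.} This is the main obstacle. Given $(\pi_1,\pi_2)\in\mathcal D$ with $H^{321}<\infty$ and a weak neighbourhood $U$, I must show $\mathbb P(A_n\cap\{(\pi_1^\sigma,\pi_2^\sigma)\in U\})\ge e^{-n(H^{321}+\eps)}$, and the difficulty is that the constraint may be tight, with $\pi_1([0,x])=\pi_2([0,x])$ on large sets. First perturb: replace $\rho_1$ by a density that slightly delays mass, for example $\rho_1^\delta=(1-\delta)\rho_1+\delta\rho'$, and similarly adjust $\rho_2$. The goal is that the new pair satisfies the strict inequality $\pi_2([0,x])-\pi_1([0,x])\ge c\min(x,1-x)$ in the interior with equal total masses, and has densities bounded away from $0$ and $1$. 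Convexity of $H^{321}$ and continuity under such mixtures keep the rate within $\eps$. Then use the block construction on $m$ blocks. For the perturbed pair, I will force the block counts to match, so the prefix gap at block boundaries is at least of order $n/m$, while within a block the prefix difference can fluctuate by at most $n/m$. To avoid worst-case arrangements, I will choose the word inside each block to be ``ones spread evenly'', which costs nothing exponentially. Near $x=0$ and $x=1$, where the gap is small, I will use short deterministic prefix and suffix segments of length $\eta n$, for instance $w_2$ starting with ones and $w_1$ ending with ones. These cost at most $O(\eta)n$ in rate and guarantee the ballot condition at the ends. Combining these pieces gives the lower bound, and exponential tightness is automatic by compactness of $\mathcal D$.
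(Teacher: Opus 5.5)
Your upper bound is the paper's argument: compare with independent uniform subsets and pay only $4^n/C_n=\Theta(n^{3/2})$.

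\textbf{How your lower bound differs from the paper's.} The paper never regularizes the target. It starts from an \emph{unconstrained} pair $(A',B')$ of subsets of $\{1,\dots,n-1\}$ with $(\nu_n^{A'},\nu_n^{B'})\in B(\pi,\varepsilon)$. Such a pair has prefix discrepancy $\#\{b\in B':b<k\}-\#\{a\in A':a<k\}\ge -2\varepsilon n$. The paper repairs it with a surgery map $T$: delete the $\lfloor 2\varepsilon n\rfloor+1$ smallest elements of $A'$, add the right number of largest missing elements, and keep $B=B'$. The output lies in $\Dyck_n$ within $8\varepsilon$ of $\pi$. At most $7\varepsilon n$ elements move, so $T$ is at most $e^{n(h(7\varepsilon)+o(1))}$-to-one, where $h(t)=-t\log t-(1-t)\log(1-t)$. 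Mogulskii's lower bound then transfers with an error that vanishes as $\varepsilon\to 0$.

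You instead push the target into the interior of the convex set $\mathcal D$ and count admissible words by hand. This regularization is sound:
\begin{itemize}
\item Mixing with $\rho_1'=\frac14+\frac x2$ and $\rho_2'=\frac34-\frac x2$ (both of mass $\frac12$, with $F_2'-F_1'=\frac{x(1-x)}2$) gives a gap $\ge \frac\delta4\min(x,1-x)$ at Kolmogorov distance $\le\delta$.
\item Convexity together with $H^{321}\le 2\log 2$ gives $H^{321}(\tilde\pi)\le H^{321}(\pi)+2\delta\log 2$.
\item Jensen's inequality shows that block-averaging only lowers the rate.
\end{itemize}
Your route is self-contained and constructive, since it reproves the needed half of Mogulskii via Stirling. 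The price is more bookkeeping: the order in which $\delta,\eta,m$ are chosen, rounding, and securing exact $|A|=|B|$ and $n\notin B$ through the end segments. The paper's surgery takes a few lines and needs no information about $\pi$.

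\textbf{A step that must be changed.} As written, one sentence breaks the lower bound: ``choose the word inside each block to be ones spread evenly, which costs nothing exponentially''. If this fixes the word in each block, then each vector of block counts leaves essentially one configuration. That configuration has probability about $4^{-n}$, i.e.\ rate $2\log 2$ instead of $H^{321}(\pi)$. The entropy $\prod_k\binom{n/m}{p_{i,k}n/m}$ that you need comes precisely from summing over \emph{all} intra-block arrangements.

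The restriction is also unnecessary, because your gap estimate undersells what the perturbation gives. Set $c=\delta/4$. On $[\eta,1-\eta]$, the prefix difference at block boundaries is at least $c\eta n-O(m)$; the all-ones start of $w_2$ only adds a nonnegative offset, since $\tilde F_2(\eta)\le\eta$. This is of order $n$, not $n/m$, while inside a block the difference drops by at most $n/m$. Taking $m>2/(c\eta)$ therefore makes every arrangement with the prescribed block counts admissible, and you keep the full block entropy. A mesoscopic version of ``evenly spread'' (prescribed counts in sub-blocks of fixed length $\ell$) would also be harmless, costing $\log(\ell+1)/\ell$ per unit length, but it is superfluous.

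\textbf{Minor points.} The $2\log 2$ in $H^{321}$ already belongs to the unconstrained Bernoulli$(1/2)$ rate; $\mathbb P(A_n)=e^{o(n)}$ only ensures that no \emph{extra} constant appears. Also, use the paper's Lebesgue-on-cells measures rather than Dirac masses, so that $(\pi_1^\sigma,\pi_2^\sigma)\in\mathcal D$ exactly.
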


\medskip

Note that in both cases, the rate functions $H^{231}$ and $H^{321}$ are explicit and simple, so that their analysis will be possible.
In particular, $H^{231}$ (resp. $H^{321}$) attains its unique minimum at $\varphi^* = 0$ (resp.~$\pi_1^* = \pi_2^* = \frac{1}{2} \textrm{Leb}_{|[0,1]}$), corresponding in both cases to the diagonal permuton. 
We retrieve the already known convergence of the uniform $231$ (resp. $321$)-avoiding permutations to this diagonal permuton. 
The fluctuations around this limit have been described in \cite{HoffmanBrownian1} and the  LDPs that we provide here complement these results naturally.

Also, the decay rate of single points probabilities, namely, $\mathbb P(\sigma_n(i)=j)$ when $i$ and $j$ are both of order $n$, have been studied in depth; see \cite{MinerPak,atapour2014largedev}.
Such results are neither weaker nor stronger than our LDPs. 
On the one hand,
we look at deviation probabilities of the whole object rather than the image of a single element, 
so it might seem that our results contain more information. 
However, on the other hand,
the topology on the space of permutons is too weak for the image of a specific element to be a continuous functional,
so one can not deduce deviation of single point probabilities from our LDPs.
\medskip

To conclude this section, let us say a few words of the proof. As pointed above, we do need to establish two LDPs respectively in the spaces $\mathcal E$ and $\mathcal D$. In both cases, the underlying random objects are Dyck paths, that is random excursions conditioned to stay nonnegative.
LDPs for random walks have been essentially known since the work of Mogulskii in the 70's \cite{mogulskij1976LDP_Random_Walks}. 
This involves a rate function which coincides with $H^{231}$ when it is finite and
we need to prove that the result can be transferred to the conditioned models.
Note that LDPs for continuous-time processes conditioned to return near zero have been obtained in the literature by \cite{mogulskii2014ldp,dort2024ldp}, but up to our knowledge, there are no known LDPs for Dyck paths.
The scheme of proof is similar for Propositions~\ref{prop:LDP_231_pi_intro} and~\ref{prop:LDP_321_pi_intro}, respectively.
The upper bound is easy as it follows immediately from the LDP for the unconditioned model,
and the fact that the conditioning event has a polynomially small probability (and not an exponentially small one).
More work is needed on the lower bound: we need to perform small transformations on our objects that force the conditioning event to hold, while not changing the probability too much.
This kind of transformation arguments is standard in the large deviation literature.

\subsection{Third main results: explicit limit shapes and asymptotic partition functions for Mallows random permutations conditioned to avoid 321 or 231}
\label{sec: permuton limits}

We now come back to our original motivation of studying constrained Mallows permutations.
Recall that for $\alpha\in\{321,231\}$ and $n\ge1$, we let $\tau_n^{\beta,\alpha}$ denote a Mallows permutation of size $n$ with bias parameter $q_n = \ee^{\beta/n}$ conditioned to avoid $\alpha$; see \eqref{eq:def_Constraint_Mallows}.

Our main theorems say that
the random permutations $\tau^{\beta,\alpha}_{n}$ converge to a deterministic permuton $\mu^\alpha_\beta$ which depends on $\beta$.
To describe the limit permutons, it is useful to use the language of push-forward measures. 
If $g:E \to F$ is a measurable function and 
$\mu$ a measure on $E$, then the push-forward measure $g_\# \mu$ 
is the measure defined by $g_\# \mu(A)=\mu(g^{-1}(A))$ for all measurable $A \subseteq F$. 
In the following, we implicitly use $X$ as the argument of $g$,
i.e.~we write $g(X)$ for the function $x \mapsto g(x)$. 
Lastly, $g_\#$ is seen as an operator on measures so that $(g_\#+h_\#)\mu$ is simply equal to $g_\# \mu+h_\# \mu$. In the results below, almost sure convergences hold for any coupling between the objects $\tau^{\beta,\alpha}_n$ for $n \geq 1$.

\begin{thm}
\label{thm:phase_transition_231}
For $\beta >0,$ we let
\[
    f^{231}_\beta(x) := 1 - \frac{1}{\beta} \log\left( 1 + \ee^\beta - \ee^{\beta x} \right) \,.
\]
Then, the random permutations $\tau^{\beta,231}_{n}$ converge almost surely in the sense of permutons to the deterministic permuton
\[
    \mu^{231}_\beta :=  \big(X,f^{231}_\beta(X)\big)_\# \min\left(1,(f^{231}_\beta)'(x) \right) \dd x
    + \big( X,1-X \big)_\# \max\left(0, 1- (f^{231}_\beta)'(x) \right) \dd x 
\]
if $\beta>0$,
and to the diagonal permuton if $\beta\le 0.$
\end{thm}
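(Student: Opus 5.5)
Our plan is to combine the LDP of Proposition~\ref{prop:LDP_231_pi_intro} with Varadhan's lemma, and then solve the resulting variational problem explicitly.

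\textbf{Step 1: inversions as a functional of the excursion.} Using the connection between inversions of $231$-avoiding permutations and Dyck path area (Proposition~\ref{prop: inv of 231 avoiding permutation}), we express $\inv(\sigma)/n^2$ as a continuous functional of $\varphi_\sigma$ up to an $o(1)$ error. We expect this to be $\inv(\sigma) = n^2\int_0^1 \varphi_\sigma(t)\,\dd t + O(n)$, possibly with a normalising constant fixed by the rotation convention. Writing $A(\varphi):=\int_0^1\varphi$, which is continuous for the sup norm on the compact space $\mathcal E$, we get $\frac{\beta}{n}\inv(\sigma) = n\,\beta A(\varphi_\sigma) + o(n)$ uniformly. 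Varadhan's lemma (tilted LDP version) then shows that $\varphi_{\tau^{\beta,231}_n}$ satisfies an LDP at speed $n$ with rate function
\[
I_\beta(\varphi) = H^{231}(\varphi) - \beta A(\varphi) - \inf_{\mathcal E}\big(H^{231}-\beta A\big).
\]
If $I_\beta$ has a unique zero $\varphi^*_\beta$, Borel--Cantelli (the probabilities of leaving a neighbourhood are exponentially small) gives $\varphi_{\tau_n}\to\varphi^*_\beta$ almost surely for any coupling.

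\textbf{Step 2: the variational problem.} We minimise $\int_0^1 \big(2J(\varphi') - \beta\varphi\big)$ over $1$-Lipschitz $\varphi\ge 0$ with zero boundary values. Since $J$ is strictly convex and $A$ is linear, the functional is strictly convex, so the minimiser is unique.
\begin{itemize}
\item For $\beta\le 0$, both terms are minimised by $\varphi=0$, which corresponds to the diagonal permuton.
\item For $\beta>0$, the Euler--Lagrange equation reads $2J''(\varphi')\varphi''=-\beta$, with $2J'(y)=\log\frac{1+y}{1-y}$. This gives $\varphi'(t)=\tanh(c-\beta t/2)$. Symmetry and $\varphi(1)=0$ force $c=\beta/4$. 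The constraint $\varphi\ge0$ is then automatically inactive, since $\varphi$ is concave.
\end{itemize}
We then translate back through the rotation to obtain the RLM curve $f$. Note that $f'=\frac{1-\varphi'}{1+\varphi'}$ in the rotated coordinates, which is an exponential in $t$. We expect to recover $f^{231}_\beta(x)=1-\frac1\beta\log(1+\ee^\beta-\ee^{\beta x})$, checking that $f(0)=0$ and $f(1)=1$. This step is a routine but careful computation.

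\textbf{Step 3: identify the permuton.} By Proposition~\ref{prop:intro-param231}, the map from excursions to $231$-avoiding permutons is continuous (indeed bicontinuous), so $\tau_n$ converges to the unique $231$-avoiding permuton with RLM curve $f^{231}_\beta$. It remains to check that $\mu^{231}_\beta$ is that permuton. Its marginals are uniform: the first marginal is clear, and the second follows because on the region $f'<1$ the mass not on the curve goes to the antidiagonal piece. One must also check that $\mu^{231}_\beta$ avoids $231$ and has RLM curve $f^{231}_\beta$; injectivity in Lemma~\ref{lem:231_Uniqueness} then concludes. Alternatively, one can plug $f^{231}_\beta$ into the explicit reconstruction formula of Lemma~\ref{lem:construction_muf}. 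Note that $(f^{231}_\beta)'(x)=\ee^{\beta x}/(1+\ee^\beta-\ee^{\beta x})$ is increasing and crosses $1$ at $x_0=\frac1\beta\log\frac{1+\ee^\beta}{2}$. So the antidiagonal part lives on $[0,x_0]$, with values in $[1-x_0,1]$, above the curve as the RLM structure requires.

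\textbf{Main obstacle.} We expect the main difficulty to be Step~1, namely getting the exact inversion/area identity and its constant right: a factor error would rescale $\beta$. Step~3's verification that this two-piece measure is precisely the reconstruction is the second delicate point.
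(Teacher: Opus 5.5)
Your overall route is the same as the paper's: the LDP for $\varphi_{\sigma_n}$, a Varadhan tilt by the inversion count, a strictly convex variational problem on $\mathcal E$, bicontinuity of $\mu\mapsto\varphi_\mu$, and identification of the limit via uniqueness of the $231$-avoiding permuton with given RLM curve. However, the constant you guess in Step~1 is wrong, and with it Step~2 does not yield the stated limit.

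Rotating and then scaling by $1/\sqrt2$ halves areas, so $\int_0^1\varphi_\sigma=\frac12\bigl(\frac12-\int_0^1 f_\sigma\bigr)$. Together with $\inv(\sigma)=\binom n2-n^2\int_0^1 f_\sigma$, this gives $\inv(\sigma)=2n^2\int_0^1\varphi_\sigma(t)\,\dd t-\frac n2$, which is exactly Proposition~\ref{prop: inv of 231 avoiding permutation}. Hence the functional to minimise is $H^{231}(\varphi)-2\beta\int_0^1\varphi$. Its Euler--Lagrange equation is $\log\frac{1+\varphi'}{1-\varphi'}=\beta(1-2t)$, i.e.\ $\varphi'(t)=\tanh(\beta(1-2t)/2)$. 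This integrates to $\varphi_\beta(t)=\frac1\beta\log\frac{1+\ee^\beta}{1+\ee^{\beta(1-2t)}}-t$, and via $f'=\frac{1-\varphi'}{1+\varphi'}$ to $(f^{231}_\beta)'(x)=\ee^{\beta x}/(1+\ee^\beta-\ee^{\beta x})$.

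With your functional $\int_0^1(2J(\varphi')-\beta\varphi)$ you instead get $\varphi'(t)=\tanh(\beta(1-2t)/4)$, which is the optimal excursion for parameter $\beta/2$. Carried through, your argument would prove convergence to $\mu^{231}_{\beta/2}$, so the claimed recovery of $f^{231}_\beta$ does not follow from your own Step~2. This is precisely the rescaling of $\beta$ you worried about, and it must be fixed for the proof to establish the theorem.

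A smaller point in Step~3: uniformity of the second marginal of $\mu^{231}_\beta$ is not explained by the mass off the curve ``going to the antidiagonal''. For $y\le f(x_0)$ only the curve contributes and the check is immediate. For $y\ge f(x_0)=1-x_0$, the mass of $[0,1]\times[y,1]$ is $(1-f^{-1}(y))+(1-y-f(1-y))$. This equals $1-y$ only thanks to the antidiagonal symmetry $f^{-1}(y)=1-f(1-y)$, inherited from $\varphi_\beta(1-t)=\varphi_\beta(t)$; this is how Proposition~\ref{prop: support and density of 231 permuton} proceeds. With that, the support description gives $231$-avoidance and RLM curve $f^{231}_\beta$, and Lemma~\ref{lem:231_Uniqueness} concludes as you intend.
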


\begin{thm}
\label{thm:phase_transition_321}
For $\beta >0,$ we let
\[ f^{321}_\beta(x) := \frac12 + \frac1\beta \log\left( \frac{\ee^{\beta x} + \ee^{\beta/2}}{1 + \ee^{\beta/2} + \ee^{\beta} - \ee^{\beta x}} \right) \,.\]
Then, the random permutations $\tau^{\beta,321}_{n}$ converge almost surely in the sense of permutons to the deterministic permuton
\[ \mu^{321}_\beta := \big( (X,f^{321}_\beta(X))_\# + (f^{321}_\beta(X),X)_\#  \big) \frac{\dd x}{1+\exp\left(\beta(\frac12-x) \right)} 
\]
if $\beta>0$,
and to the diagonal permuton if $\beta\le 0.$
\end{thm}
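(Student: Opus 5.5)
We would derive Theorem~\ref{thm:phase_transition_321} from the LDP of Proposition~\ref{prop:LDP_321_pi_intro} via Varadhan's lemma, together with the parametrization $\Psi$ of Proposition~\ref{prop:intro-param321}.

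\textbf{Step 1: the inversion count as a functional on $\mathcal D$.} We first express $\frac1{n^2}\inv(\sigma)$, for $\sigma$ a $321$-avoiding permutation, as a continuous functional $F$ of $(\pi_1^\sigma,\pi_2^\sigma)$, up to $o(1)$. A $321$-avoiding permutation is a union of two increasing sequences: the RLM, below the diagonal, and the rest, above it. An inversion pairs an upper point $(i,\sigma(i))$ with a lower point $(j,\sigma(j))$ with $i<j$ and $\sigma(i)>\sigma(j)$. Counting such pairs gives, up to lower order terms, the area between the cumulative functions of the two projections:
\[
F(\pi_1,\pi_2)=\int_0^1 \bigl(\pi_2([0,x])-\pi_1([0,x])\bigr)\,\dd x .
\]
This is the continuous analogue of the area of the associated Dyck path. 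We must check that the rescaled discrete quantity converges uniformly to $F$; this holds because $F$ is continuous for weak convergence of subuniform measures.

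\textbf{Step 2: Varadhan's lemma and the optimization.} Write $\mathbb P(\tau^{\beta,321}_n=\sigma)\propto \ee^{n\beta \inv(\sigma)/n^2}$, with the uniform measure as the reference. The LDP then transfers: $(\pi_1,\pi_2)$ satisfies an LDP with rate function $H^{321}-\beta F-\inf(H^{321}-\beta F)$, and $\frac1n\log(Z_n/C_n)$ converges to $\sup(\beta F-H^{321})$. It remains to show that $\beta F-H^{321}$ has a unique maximizer on $\mathcal D$. Writing $F=\int_0^1(1-x)(\rho_2-\rho_1)\,\dd x$ makes the functional separable in $\rho_1,\rho_2$ pointwise, apart from the constraints $\pi_1([0,1])=\pi_2([0,1])$ and the ordering of the cumulative functions.

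Introduce a Lagrange multiplier $\lambda$ for the equal-mass constraint. Pointwise maximization yields logistic densities
\[
\rho_1(x)=\frac1{1+\ee^{\beta(1-x)-\lambda}},\qquad \rho_2(x)=\frac1{1+\ee^{-\beta(1-x)-\lambda'}},
\]
with $\lambda$ and $\lambda'$ tied together by the equal-mass condition. For $\beta>0$ we expect, by symmetry, the common density $1/(1+\exp(\beta(\frac12-x)))$ for both; this appears in the statement. Indeed, the inversion weight is symmetric under the transformation $(x,y)\mapsto(1-y,1-x)$, which exchanges the roles of the two measures. We therefore look for $\pi_1=\pi_2$ with density $\rho$ and optimize directly. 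The claimed $\rho$ must be verified by checking the first-order conditions, and uniqueness follows from strict concavity: $H^{321}$ is strictly convex, $F$ is linear, and $\mathcal D$ is convex.

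The case $\beta\le0$ needs separate treatment. There $F\ge0$ is penalized, which forces $\pi_1=\pi_2=\frac12\Leb$ to remain optimal, giving the diagonal permuton.

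\textbf{Step 3: back to permutons.} Apply $\Psi$ to the maximizer. For a 321-avoiding permuton, the lower and upper parts are increasing curves transported by monotone couplings. With $\pi_1=\pi_2=\rho\,\dd x$, the lower part is $(X,f(X))_\#$ of some measure, where $f$ is determined by the condition that the upper part carries the complementary marginals. This yields an ODE for $f$ whose solution is $f^{321}_\beta$. The upper part is its mirror $(f(X),X)_\#$, which accounts for the symmetric form of $\mu^{321}_\beta$. Since $\Psi$ is continuous and the rate function has a unique minimizer, the LDP gives exponential concentration. Borel--Cantelli then gives almost sure convergence for any coupling.

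\textbf{Main obstacle.} The hardest part is Step 2: correctly handling the constraint $\pi_1([0,x])\le\pi_2([0,x])$ in the optimization. If the unconstrained maximizer had $\pi_1\ne\pi_2$, this constraint might be active, and a more careful variational argument would be needed. We would first try to show that this maximizer automatically satisfies the constraint, then check directly that $\mu^{321}_\beta$ is $\Psi$ of it, including the explicit ODE solution.
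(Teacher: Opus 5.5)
Your overall route is the paper's: the inversion count as a linear functional on $\mathcal D$, Varadhan's lemma, strictly convex optimization, continuity of $\Psi$, and Borel--Cantelli. Your $F$ also agrees with the paper's $\int_0^1 x(\rho_1-\rho_2)\,\dd x$, because $\pi_1,\pi_2$ have equal mass; the identity is in fact exact (Lemma~\ref{lem:inv_sigma}).

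\textbf{The gap: the ansatz $\pi_1=\pi_2$ in Step~2.} Replacing the optimizer by a pair with $\pi_1=\pi_2$ cannot work, for three reasons.
\begin{itemize}
\item On $\{\pi_1=\pi_2\}$ one has $F\equiv 0$, so the best such pair is $\rho\equiv\frac12$.
\item For \emph{any} $\rho$, the nondecreasing coupling $\rho\,\dd x\nearrow\rho\,\dd x$ lives on the diagonal. So $\Psi(\rho\,\dd x,\rho\,\dd x)$ is the diagonal permuton, and the ODE of Step~3 returns $f=\mathrm{id}$, not $f^{321}_\beta$.
\item Your own Lagrange formulas already exclude it: your $\rho_1$ is increasing in $x$ and your $\rho_2$ is decreasing.
\end{itemize}

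The symmetry argument is also misapplied. Reflection in the antidiagonal acts on $\mathcal D$ by $(\pi_1,\pi_2)\mapsto(R_\#\pi_2,R_\#\pi_1)$ with $R(t)=1-t$, so it forces $\rho_2(x)=\rho_1(1-x)$, not $\rho_1=\rho_2$. The inverse map $(x,y)\mapsto(y,x)$ acts by $(\pi_1,\pi_2)\mapsto(\Leb-\pi_2,\Leb-\pi_1)$ and forces $\rho_1+\rho_2=1$. The common density in $\mu^{321}_\beta$ is the density of $\pi_1$ for the lower curve and of $\Leb-\pi_2$ for the upper curve. These coincide because $\rho_2=1-\rho_1$, not because $\pi_1=\pi_2$.

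\textbf{The repair: push your first-order computation through.}
\begin{itemize}
\item A single multiplier gives $\lambda'=-\lambda$, hence $\rho_2=1-\rho_1$. Equal masses then force $\int_0^1\rho_1=\frac12$, i.e.\ $\lambda=\beta/2$. This yields $\rho_1(x)=(1+\ee^{\beta(\frac12-x)})^{-1}$ and $\rho_2(x)=(1+\ee^{\beta(x-\frac12)})^{-1}$, the paper's minimizer (Proposition~\ref{prop:max_action}).
\item Your ``main obstacle'' then disappears. The function $\rho_2-\rho_1$ is positive on $[0,\frac12)$ and odd about $\frac12$, so $\pi_2([0,x])-\pi_1([0,x])>0$ on $(0,1)$. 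Hence the pair lies in $\mathcal D$ and the ordering constraint is inactive.
\item The paper avoids infinite-dimensional Lagrange multipliers by guess-and-check. For every admissible direction (necessarily with $\int\alpha_1=\int\alpha_2$), the one-sided derivative of $A_\beta$ equals $\frac\beta2\int_0^1(\alpha_2-\alpha_1)=0$. Strict convexity on the compact convex set $\mathcal D$ (Proposition~\ref{prop:max_concave}) then makes this the unique minimizer.
\item Step~3 works with $f=G_2\circ G_1^{-1}$, characterized by $\pi_1([0,x])=\pi_2([0,f(x)])$. This gives $f'(x)=\rho_1(x)/\rho_2(f(x))$, a Riccati equation for $\ee^{\beta(f-\frac12)}$ whose solution with $f(0)=0$ is $f^{321}_\beta$.
\item Since $\Leb-\pi_1=\pi_2$ and $\Leb-\pi_2=\pi_1$, the upper part is $\pi_2\nearrow\pi_1=(f(X),X)_\#\pi_1$.
\end{itemize}
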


\begin{figure}
\centering
\includegraphics[height=5cm]{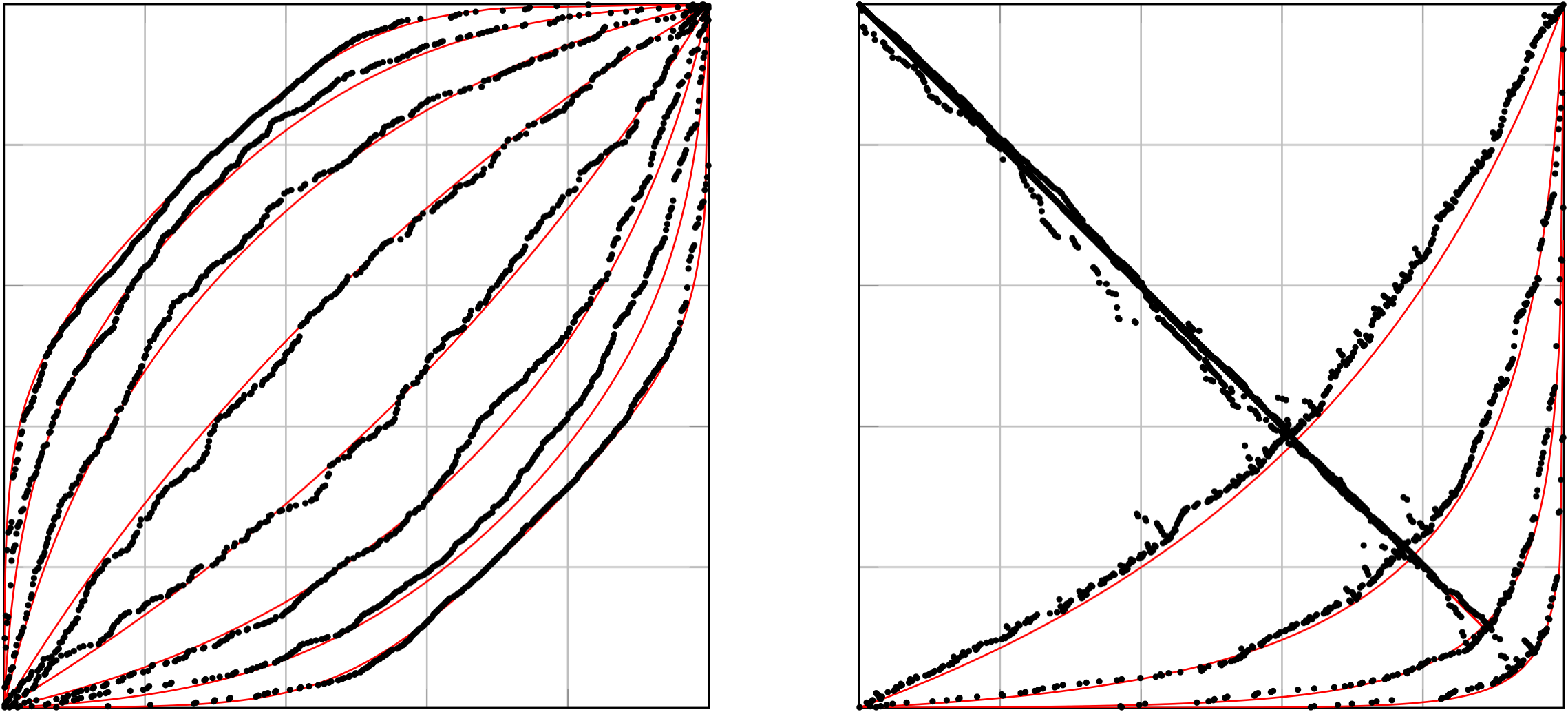}
\caption{Black dots: simulations of $\tau^{\beta,\alpha}_{n}$ with $n=600$ and $\beta\in\{1,3,6,12\}$.
Red curves: support of the limit permutons $\mu^{\alpha}_\beta$, again for $\beta\in\{1,3,6,12\}$.
Left: $\alpha=321$.
Right: $\alpha=231$.}
\label{fig:simulations_WithLimitCurves}
\end{figure}

In particular, we see that $\mu^{231}_\beta$ is supported on the union of an increasing curve
and a part of the decreasing diagonal, namely $\{(x,1-x): 0 \le x \le x_\beta^*\}$,
where $x_\beta^*$ fulfills $(f^{231}_\beta)'(x_\beta^*)=1$ 
(explicitly, $x_\beta^* = (\log(1+\ee^\beta)-\log(2) )/\beta$). On the other hand,   $\mu^{321}_\beta$ is supported on two increasing curves, symmetric of each other.
\Cref{fig:simulations_WithLimitCurves} shows realizations of $\tau^{\beta,321}_{n}$ and $\tau^{\beta,231}_{n}$ for various values of $\beta$, together with the support of their limit permutons.
\medskip
 
To finish this section, let us briefly discuss the proof strategy.
These results are obtained by using our LDPs for uniform $\alpha$-avoiding permutations, and then solving optimization problems. 
Informally speaking, our LDPs state that the probability that a uniform $\alpha$-avoiding permutation is close to a point $x$ in the permuton space is close to $\exp(-n I^\alpha(x))$.
Since all these permutations have roughly
$\frac12 n^2 \dens{21}{x}$ inversions 
(where $\dens{21}{x}$ is the density of pattern $21$ in $x$, see \Cref{ssec:permutons}), 
the probability that the Mallows random permutation $\tau^{\beta,\alpha}_{n}$ is close to $x$ is roughly proportional to
\[q_n^{n^2 \dens{21}{x}/2} \exp(-n I^\alpha(x)) = \ee^{n (\beta \dens{21}{x}/2 -I^\alpha(x))} \,.\]
The limit permuton will be the one that makes this quantity as large as possible,
i.e., that maximizes $\beta \dens{21}{x}/2 -I^\alpha(x)$. 
Note that these heuristics explain why $q_n=\ee^{\beta/n}$ was the right order of magnitude to have a non-trivial permuton limit.
Our proofs follow this heuristic, except that we formulate and solve the optimization
problems in the parameter spaces (namely,
 on excursions in the $231$-avoiding case and on pairs of measures in the $321$-avoiding case).
Remarkably, the density of inversions has a very simple expression in terms of these parameters, which eases the analysis.
In particular, the functions that we need to maximize are strictly convex,
and finding the optimum is a routine computation 
(even though the objects live in an infinite-dimensional space). 
This demonstrates the applicability of our LDPs.

As mentioned earlier, solving the above optimization problems also yield
asymptotic estimates for the underlying partition functions. 
We now state these results.
\begin{prop}\label{prop:estimateZ}
For any $\beta > 0,$ we have 
\begin{align}
	\lim_{n \rightarrow \infty} \frac{1}{n} \log (Z_n^{\beta,231}) &= 2 \log \left( 1+ \ee^\beta \right) -\beta  - \frac{1}{\beta} \int_{\ee^{-\beta}}^{\ee^\beta} \frac{\log y}{1+y} \dd y;\label{eq:partition231}\\
	\lim_{n \rightarrow \infty} \frac{1}{n} \log (Z_n^{\beta,321}) &= \frac{2}{\beta} \int_{\ee^{-\beta/2}}^{\ee^{\beta/2}} \frac{\log(1+y)}{y} \dd y.\label{eq:partition321}
\end{align}
For $\beta \le 0$, we have $\lim_{n \rightarrow \infty} \frac{1}{n} \log (Z_n^{\beta,231})=\lim_{n \rightarrow \infty} \frac{1}{n} \log (Z_n^{\beta,321})=\log 4$.
\end{prop}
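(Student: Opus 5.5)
The plan is to combine the LDPs of Propositions~\ref{prop:LDP_231_pi_intro} and~\ref{prop:LDP_321_pi_intro} with Varadhan's lemma. Write $C_n^\alpha$ for the number of $\alpha$-avoiding permutations of size $n$; this is the Catalan number, so $\frac1n\log C_n^\alpha\to\log 4$. Then
\[
Z_n^{\beta,\alpha} = C_n^\alpha\, \mathbb E\big[\ee^{\frac{\beta}{n}\inv(\sigma_{n,\alpha})}\big].
\]
The first step is to express $\frac{1}{n^2}\inv(\sigma)$, up to an $o(1)$ error uniform in $\sigma$, as a continuous functional $F^\alpha$ of the parameter. In the $231$ case, Proposition~\ref{prop: inv of 231 avoiding permutation} relates inversions to the area under the Dyck path. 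I expect this to give $\frac{1}{n^2}\inv(\sigma)=\int_0^1\varphi_\sigma(t)\,\dd t+O(1/n)$, up to the normalization constant fixed by the rotation convention, which is continuous for the sup norm on $\mathcal E$. In the $321$ case, an inversion of a $321$-avoiding permutation pairs a point strictly above the diagonal with one below it. Counting these through the positions and values of RLMs should give
\[
\frac{1}{n^2}\inv(\sigma)=\int_0^1\big(\pi_2^\sigma([0,x])-\pi_1^\sigma([0,x])\big)\,\dd x+O(1/n),
\]
which is weakly continuous on $\mathcal D$ because the measures are subuniform, so their distribution functions converge uniformly. Both expressions are bounded functionals on compact spaces, so Varadhan's lemma applies directly. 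It yields
\[
\lim\tfrac1n\log Z_n^{\beta,\alpha}=\log 4+\sup\{\beta F^\alpha-H^\alpha\}.
\]

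For $\beta\le0$, we have $F^\alpha\ge0$ and $H^\alpha\ge0$, with both equal to $0$ at the diagonal point ($\varphi=0$, resp.\ $\rho_1=\rho_2=1/2$). The supremum is therefore $0$, which gives $\log 4$.

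For $\beta>0$, the variational problems are concave, since $H^\alpha$ is convex and $F^\alpha$ is linear. I would solve each through its Euler--Lagrange equation.

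\emph{The $231$ case.} Maximize $\beta\int\varphi-2\int J(\varphi')$ subject to $\varphi(0)=\varphi(1)=0$. The stationarity condition is $2J'(\varphi')=\operatorname{artanh}(\varphi')=c-\beta t$. Hence $\varphi'(t)=\tanh(c-\beta t)$, and the boundary conditions force $c=\beta/2$. This profile should match $f^{231}_\beta$ after rotation.

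\emph{The $321$ case.} Maximize $\beta\int (\rho_2-\rho_1)(y)(1-y)\,\dd y-\int(\text{entropies})$ under the constraint $\int\rho_1=\int\rho_2$. The pointwise optimum is logistic, $\rho_2(y)=(1+\ee^{-\beta(1-y)+\lambda})^{-1}$ and similarly for $\rho_1$. Symmetry should give $\rho_1(x)=1/(1+\ee^{\beta(1/2-x)})$, consistent with Theorem~\ref{thm:phase_transition_321}, and it remains to check that the ordering constraint $\pi_1([0,x])\le\pi_2([0,x])$ is not active. Strict concavity gives uniqueness, and the KKT sufficiency argument shows the critical point is the global maximizer.

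The final step is to substitute the optimizers and compute the optimal value. This integral evaluation is where I expect most of the work. In the $231$ case, $\int\varphi=\int t\,\varphi'$ after an integration by parts, and $J(\tanh u)=u\tanh u-\log\cosh u$. Changing variables to $y=\ee^{\beta-2\beta t}$ (or similar) should produce $\int\frac{\log y}{1+y}\,\dd y$ over $[\ee^{-\beta},\ee^\beta]$, plus boundary terms of the form $2\log(1+\ee^\beta)-\beta-\log 4$. In the $321$ case, the substitution $y=\ee^{\beta(x-1/2)}$ should turn the entropy terms into $\int\frac{\log(1+y)}{y}\,\dd y$ over $[\ee^{-\beta/2},\ee^{\beta/2}]$, after cancelling $2\log2$ against $\log4$. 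The main obstacles are the exact inversion formula in terms of the parameters and the constants, which I would check with a small-$\beta$ expansion.
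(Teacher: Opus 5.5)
This is essentially the paper's argument. It applies Varadhan's lemma to the LDPs of Propositions~\ref{prop:LDP_231_pi_intro} and~\ref{prop:LDP_321_pi_intro}, then solves an explicit convex optimization: the paper checks that directional derivatives vanish and uses strict convexity, which amounts to your Euler--Lagrange/KKT argument. It finishes with the substitutions $y=\ee^{\beta(1-2t)}$ and $y=\ee^{\beta(x-1/2)}$. Your $321$ functional $\int_0^1(\pi_2([0,x])-\pi_1([0,x]))\,\dd x$ agrees with Lemma~\ref{lem:inv_sigma} after integrating by parts.

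One constant needs fixing in the $231$ case. Proposition~\ref{prop: inv of 231 avoiding permutation} gives $\frac{\beta}{n}\inv(\sigma)=2\beta n\int_0^1\varphi_\sigma-\frac{\beta}{2}$, so the objective is $2\beta\int\varphi-2\int J(\varphi')$, not $\beta\int\varphi-2\int J(\varphi')$. Separately, $J'=\operatorname{artanh}$, so $2J'(\varphi')=2\operatorname{artanh}(\varphi')$, not $\operatorname{artanh}(\varphi')$. These two factor-of-$2$ slips cancel in the optimizer, so $\varphi'=\tanh(\beta(\frac12-t))$ is correct. The final value, however, must be computed as $2\beta\int_0^1\varphi_\beta-H^{231}(\varphi_\beta)$. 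Evaluating your stated objective at $\varphi_\beta$ would be off by $\beta\int_0^1\varphi_\beta$.
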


We conclude this section with several comments.
\begin{rem}\label{rmk:negative_beta}
The last claim of Proposition~\ref{prop:estimateZ}, in the case $\beta \le 0$, is easy to prove. Indeed, let $C_n=\frac{1}{n+1} \binom{2n}n$ be the $n$-th Catalan number.
Since most of the $C_n$ many $\alpha$-avoiding permutations of size $n$
have $o(n^2)$ inversions, we have
	\[ C_n \, (1-o(1)) \, \ee^{- |\beta| o(n)} \le Z_n^{\beta,231}  \le C_n,\]
implying $\lim_{n \rightarrow \infty} \frac{1}{n} \log (Z_n^{\beta,321}) =\log 4$.
\end{rem}


\begin{rem}
When $\beta \to 0$, the permutons $\mu^{\alpha}_{\beta}$ converge to the diagonal permuton which is the limit of uniform $\alpha$-avoiding permutations as recalled above.
On the other hand, when $\beta \to \infty$, 
the permutons $\mu^{231}_{\beta}$ and $\mu^{321}_{\beta}$ converge respectively to the anti-diagonal permuton (in the $231$-avoiding case), 
and to the union of two increasing segments from $(0,1/2)$ to $(1/2,1)$ and from $(1/2,0)$ to $(1,1/2)$ (in the $321$-avoiding case). 
In both cases, it is easy to see that these are the $\alpha$-avoiding permutons maximizing the density of inversions, as expected.
\end{rem}

\begin{rem}
    Our results extend to all other patterns of size $3$ by symmetry.
    For instance, $123$-avoiding permutations are in one-to-one correspondance with $321$-avoiding permutations via $\sigma \mapsto \sigma^\mathrm{rev} := \sigma(n), \dots, \sigma(1)$.
    Through this map, right-to-left minima become left-to-right minima and inversions are swapped with non-inversions.
    Furthermore, the random permutations $\tau_n^{\beta,123}$ converge to the permuton $\mu_\beta^{123}$ obtained by flipping the permuton $\mu_\beta^{321}$ horizontally.
    In a similar way, $213$-avoiding permutations are in one-to-one correspondance with $231$-avoiding permutations via $\sigma \mapsto \sigma^\mathrm{comp} := n+1-\sigma(1), \dots, n+1-\sigma(n)$, and thus, the random permutations $\tau_n^{\beta,213}$ converge to the permuton $\mu_\beta^{213}$ obtained by flipping the permuton $\mu_\beta^{231}$ vertically.
    That way, we get limit shapes for $\tau_n^{\beta,\alpha}$ for all patterns $\alpha$ of size $3$.
\end{rem}

\subsection{Comparing 231 and 321-avoiding permutations with given RLM}
\label{sec:counterexamples}

In this section, we provide a few examples illustrating the differences between the $321$-avoiding and $231$-avoiding cases.

In our first family of examples, we consider two sets of right-to-left minima which differ by a single point. 
The associated pairs $\left( \pi_1, \pi_2 \right)$ of measures are then very close to each other, but the difference in the sets of right-to-left minima
have been chosen so that the associated RLM curves are far from each other.
In this case, the corresponding $321$-avoiding permutations converge to the same permuton, while the corresponding $231$-avoiding have different permuton limits 
--- see Figure~\ref{fig: counter-example RLM curve}.

\begin{figure}
\[\hspace{-2mm}
\begin{array}{cc|cc}
\begin{array}{c}\begin{tikzpicture}
\begin{axis}[
      axis lines=box,
      xmin=0, xmax=1,
      ymin=0, ymax=1,
      xticklabels={}, 
      yticklabels={}, 
      xtick={0,0.5,1}, 
      ytick={0,0.5,1},
      grid=both,
      width=4.7cm,
      height=4.7cm
    ]
   \addplot[black, only marks, mark size = .3] table {321perm_curve1.dat};
   \end{axis}
          \draw[color=red] (0.03,0) -- (3.1*1/2+.04,3.1*1/4)--(3.1*1/2+.04,3.1*1/2-.04)--(3.1*3/4,3.1*1/2-.04)--(3.1,3.1-.03);
\end{tikzpicture}
\end{array}
&
\begin{array}{c}
\begin{tikzpicture}
\begin{axis}[
      axis lines=box,
      xmin=0, xmax=1,
      ymin=0, ymax=1,
      xticklabels={}, 
      yticklabels={}, 
      xtick={0,0.5,1}, 
      ytick={0,0.5,1},
      grid=both,
      width=4.7cm,
      height=4.7cm
    ]
   \addplot[black, only marks, mark size = .3] table {321perm_curve2.dat};
\end{axis}
          \draw[color=red] (0.03,0) -- (3.1*1/2+.02,3.1*1/4-.02)--(3.1*3/4+.02,3.1*1/4-.02)--(3.1*3/4+.02,3.1*1/2-.02)--(3.1,3.1-.03);
\end{tikzpicture}
\end{array}
&
 \hspace{2cm}\begin{array}{c}\includegraphics[height=3.3cm]{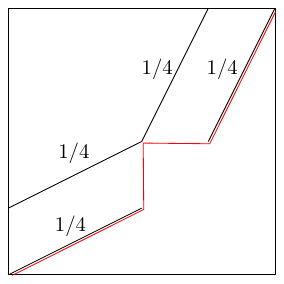}\end{array}\hspace{-2cm}
 &
\\
\begin{array}{c}
\begin{tikzpicture}
\begin{axis}[
      axis lines=box,
      xmin=0, xmax=1,
      ymin=0, ymax=1,
      xticklabels={}, 
      yticklabels={}, 
      xtick={0,0.5,1}, 
      ytick={0,0.5,1},
      grid=both,
      width=4.7cm,
      height=4.7cm
    ]   \addplot[black, only marks, mark size = .3] table {231perm_curve1.dat};
\end{axis}
          \draw[color=red] (0.03,0) -- (3.1*1/2+.04,3.1*1/4)--(3.1*1/2+.04,3.1*1/2-.04)--(3.1*3/4,3.1*1/2-.04)--(3.1,3.1-.03);
\end{tikzpicture}
\end{array}
&
\begin{array}{c}
\begin{tikzpicture}
\begin{axis}[
      axis lines=box,
      xmin=0, xmax=1,
      ymin=0, ymax=1,
      xticklabels={}, 
      yticklabels={}, 
      xtick={0,0.5,1}, 
      ytick={0,0.5,1},
      grid=both,
      width=4.7cm,
      height=4.7cm
    ]   \addplot[black, only marks, mark size = .3] table {231perm_curve2.dat};
\end{axis}
       \draw[color=red] (0.03,0) -- (3.1*1/2+.02,3.1*1/4-.02)--(3.1*3/4+.02,3.1*1/4-.02)--(3.1*3/4+.02,3.1*1/2-.02)--(3.1,3.1-.03);
\end{tikzpicture}
\end{array}
 &
 \begin{array}{c}\includegraphics[height=3cm]{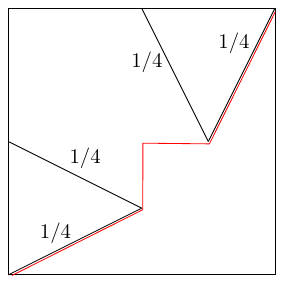}\end{array}
 &
   \begin{array}{c}  \includegraphics[height=3cm]{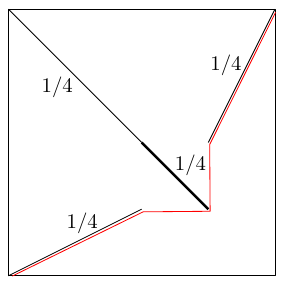}\end{array}
\end{array}\]
\caption{Examples of $\alpha$-avoiding permutations of size $120$ with given right-to-left minima on the left, and their limit permutons on the right.
In red, we plot the RLM curves of the permutations/permutons (slightly shifted so that they do not hide the points).
In the top line $\alpha=321$, while in the bottom line $\alpha=231$.
First column: RLMs are the points $(2i, i)$ for $i< \frac14n$, $(\frac12n, \frac12n)$, and $(n-i, n-2i)$ for $i<\frac14n$.
Second column: we have the same RLMs except that $(\frac12n, \frac12n)$
is replaced by $(\frac34n, \frac14n)$}
\label{fig: counter-example RLM curve}
\end{figure}

Our second family of examples illustrates the opposite situation.
Here, we choose two sets of right-to-left minima such that the associated RLM curves
are close to each other, but the associated pairs of measures are very different
(because one contains twice as many RLMs as the other).
In this case, the corresponding $231$-avoiding permutations converge to the same permuton,
but not the corresponding $321$-avoiding permutations ---
see Figure~\ref{fig: counter-example RLM mass}.

\begin{figure}
\[\hspace{-3mm} \begin{array}{cc|cc}
 \begin{array}{c}
\begin{tikzpicture}
\begin{axis}[
      axis lines=box,
      xmin=0, xmax=1,
      ymin=0, ymax=1,
      xticklabels={}, 
      yticklabels={}, 
      xtick={0,0.5,1}, 
      ytick={0,0.5,1},
      grid=both,
      width=4.7cm,
      height=4.7cm
    ]   \addplot[black, only marks, mark size = .2] table {321perm_mass1.dat};
\end{axis}
\end{tikzpicture}\end{array}
&
 \begin{array}{c}
\begin{tikzpicture}
\begin{axis}[
      axis lines=box,
      xmin=0, xmax=1,
      ymin=0, ymax=1,
      xticklabels={}, 
      yticklabels={}, 
      xtick={0,0.5,1}, 
      ytick={0,0.5,1},
      grid=both,
      width=4.7cm,
      height=4.7cm
    ]   \addplot[black, only marks, mark size = .2] table {321perm_mass2.dat};
\end{axis}
\end{tikzpicture}\end{array}
&
 \begin{array}{c}\includegraphics[height=3cm]{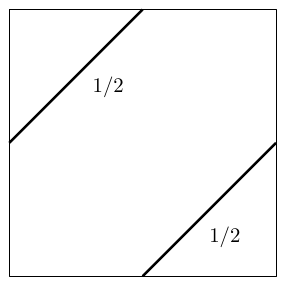}\end{array}&
     \begin{array}{c}\includegraphics[height=3cm]{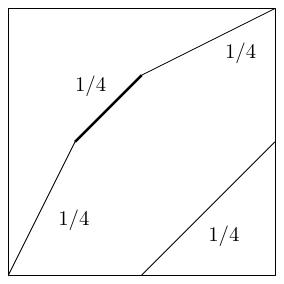}\end{array}\\
 \begin{array}{c}
\begin{tikzpicture}
\begin{axis}[
      axis lines=box,
      xmin=0, xmax=1,
      ymin=0, ymax=1,
      xticklabels={}, 
      yticklabels={}, 
      xtick={0,0.5,1}, 
      ytick={0,0.5,1},
      grid=both,
      width=4.7cm,
      height=4.7cm
    ]   \addplot[black, only marks, mark size = .2] table {231perm_mass1.dat};
\end{axis}
\end{tikzpicture}\end{array}
&
 \begin{array}{c}
\begin{tikzpicture}
\begin{axis}[
      axis lines=box,
      xmin=0, xmax=1,
      ymin=0, ymax=1,
      xticklabels={}, 
      yticklabels={}, 
      xtick={0,0.5,1}, 
      ytick={0,0.5,1},
      grid=both,
      width=4.7cm,
      height=4.7cm
    ]   \addplot[black, only marks, mark size = .2] table {231perm_mass2.dat};
\end{axis}
\end{tikzpicture}\end{array}&
 \hspace{20mm}\begin{array}{c}
\includegraphics[height=3cm]{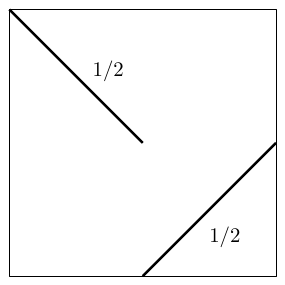}
\end{array} \hspace{-20mm}\end{array}\]
\caption{Further examples of $\alpha$-avoiding permutations of size $120$ with given right-to-left minima
on the left, and their limit permutons on the right.
In the top line $\alpha=321$, while in the bottom line $\alpha=231$.
First column: RLMs are the points $(\frac12n+i, i)$ for $i\le \frac12n$.
Second column: we keep only half of the RLMs, namely the RLMs are the points $(\frac12n+2i, 2i)$ for $i\le \frac14n$.}
\label{fig: counter-example RLM mass}
\end{figure}

These two families of examples illustrate that the RLM curve is a continuous parametrization for 231-avoiding permutons, while the description via pair of measures is more appropriate for 321-avoiding objects.

\subsection{Outline of the paper}
We end this introduction with a brief description of the organization of the paper. In the next section, we gather some notations and preliminaries on the various objects (LDPs, permutons, etc) that we use. 
The rest of the paper, from Section \ref{sec:structure_321} on, will be split in two parts, respectively devoted to the study of $321$ and of $231$-avoiding permutations. 
Note that, as the description of the results was easier for the $231$ case, we chose to start with the description of the corresponding results in the introduction. 
However, as the proofs are more involved in this case, we chose to devote the first part of the core of the paper to the $321$ case. As both parts can be read more or less independently, we hope it will be convenient for the reader.

\section{Notation and preliminaries}

For a compact Polish space $X$ equipped with its Borel $\sigma$-algebra, we let $\mathcal M(X)$ be the space of measures with finite mass on $X,$ endowed with the topology of weak convergence.

For two measures $\mu_1$ and $\mu_2$, we write $\mu_1 \le \mu_2$ to mean that $\mu_1(A) \le \mu_2(A)$ for any Borel set $A.$
Also, we write $f_{\#}\mu$ for the push-forward measure of $\mu$ by $f$, defined by $f_{\#}\mu (A)=\mu(f^{-1}(A))$
for any Borel set $A.$

\subsection{Large deviation principles}
\label{ssec:LDP}

In this subsection, we give a short introduction to large deviation principles (LDP).
This is standard material.
We refer the reader to Appendix D in~\cite{anderson2010BookRandomMatrix} for a concise introduction of useful tools on LDPs, and to~\cite{dembo1998large-deviations} for a thorough reference.

Let $E$ be a Polish space, $(X_n)$ a sequence of $E$-valued random variables, and $(a_n)$ a sequence of positive real numbers tending to $\infty$.
Let $I$ be a nonnegative lower semicontinuous function on $E$, possibly taking value $\infty$.
We say that the sequence $(X_n)$ satisfies an LDP\footnote{In the standard references \cite{dembo1998large-deviations} and \cite{anderson2010BookRandomMatrix}, LDPs are defined for sequences of probability measures and not of random variables. 
However, as often in probability theory, we shall identify random variables with their distribution, and say that a sequence of random variables satisfies an LDP when their distributions do.} 
at speed $a_n$ with rate function $I$ if, for any closed set $F \subseteq E$, one has
\begin{equation}\label{eq:def_LDP_ub}
  \limsup_{n \to \infty} \tfrac1{a_n} \log\mathbb{P}(X_n \in F) \le -\inf_{x \in F} I(x),
\end{equation}
and for any open set $O \subseteq E$, one has
\begin{equation}\label{eq:def_LDP_lb}
   \liminf_{n \to \infty} \tfrac1{a_n} \log\mathbb{P}(X_n \in O) \ge -\inf_{x \in O} I(x).
\end{equation}

Moreover, we say that the sequence $(X_n)$ satisfies a weak LDP if~\eqref{eq:def_LDP_lb} holds for open sets and if the upper bound~\eqref{eq:def_LDP_ub} holds for compact sets \cite[Definition D.2]{anderson2010BookRandomMatrix}.
Note that if $E$ is a compact set, weak and full LDP are equivalent. If moreover $d$ is a metric on $E$ (giving the required topology), and $B(x,\delta) := \{ y\in E : d(x,y) \le \delta\},$ then weak LDPs follow from the two following bounds~(see~\cite[Corollary D.6]{anderson2010BookRandomMatrix}):
\[ \lim_{\delta \rightarrow 0} \limsup_{n \rightarrow \infty} \tfrac1{a_n} \log\mathbb{P}(X_n \in B(x,\delta)) \le - I(x),  \]
and
\[ \lim_{\delta \rightarrow 0} \liminf_{n \rightarrow \infty} \tfrac1{a_n} \log\mathbb{P}(X_n \in B(x,\delta)) \ge - I(x).\medskip  \]

The following property, known as the contraction principle, is a useful tool to transfer LDPs, see e.g.~\cite[Theorem 4.2.1]{dembo1998large-deviations}.
\begin{prop}
 \label{prop:contractionprinciple}
 Assume that $(X_n)$ satisfies an LDP at speed $a_n$ with good\footnote{A rate function is said to be \textit{good} if its level sets $\{x \in E: \, I(x) \le M\}$ are compact.} rate function $I$ on some space $E$.   
 Let $F:E \rightarrow G$ be a bounded continuous function. 
 Then, the sequence of random variables $Y_n=F(X_n)$ satisfies an LDP at speed $a_n$ with good rate function $J$ given, for any $y\in G,$ by
 \[ J(y) =  \inf\{ I(x) : F(x)=y\}.\]
\end{prop}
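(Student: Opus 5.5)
We will prove the contraction principle directly from the definitions of \Cref{ssec:LDP}, working with preimages under $F$. We take $G$ to be a Hausdorff topological space (in our applications it is metric). Boundedness of $F$ plays no role; only continuity is used. The argument has three steps: first show that the infimum defining $J$ is attained and that $J$ has compact level sets, then deduce the two LDP bounds from the identity $\mathbb P(Y_n\in A)=\mathbb P(X_n\in F^{-1}(A))$.

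\emph{Step 1: $J$ is a good rate function.} Clearly $J\ge 0$, with $J(y)=\infty$ when $y\notin F(E)$. We first show that the infimum in the definition of $J(y)$ is attained whenever $J(y)<\infty$. For $\eps>0$ let
\[
K_\eps:=F^{-1}(\{y\})\cap\{x: I(x)\le J(y)+\eps\}.
\]
Since $\{y\}$ is closed in the Hausdorff space $G$ and $F$ is continuous, $F^{-1}(\{y\})$ is closed. The sublevel set $\{I\le J(y)+\eps\}$ is compact by goodness of $I$. Hence each $K_\eps$ is a nonempty compact set, and these sets decrease as $\eps\downarrow 0$. Their intersection is therefore nonempty, and any point $x$ in it satisfies $F(x)=y$ and $I(x)=J(y)$.

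Attainment gives, for every $M\ge 0$,
\[
\{y: J(y)\le M\}=F\big(\{x: I(x)\le M\}\big).
\]
The right-hand side is a continuous image of a compact set, hence compact, and hence closed since $G$ is Hausdorff. Closed sublevel sets mean that $J$ is lower semicontinuous, and compact sublevel sets mean that $J$ is good.

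\emph{Step 2: the two bounds.} For any set $A\subseteq G$ we have the identity
\[
\inf_{x\in F^{-1}(A)} I(x)=\inf_{y\in A}\ \inf_{x:\,F(x)=y} I(x)=\inf_{y\in A} J(y).
\]
If $C\subseteq G$ is closed, then $F^{-1}(C)$ is closed in $E$. The upper bound \eqref{eq:def_LDP_ub} for $(X_n)$ then gives
\[
\limsup_{n\to\infty}\tfrac1{a_n}\log\mathbb P(Y_n\in C)=\limsup_{n\to\infty}\tfrac1{a_n}\log\mathbb P\big(X_n\in F^{-1}(C)\big)\le-\inf_{F^{-1}(C)}I=-\inf_C J.
\]
The lower bound \eqref{eq:def_LDP_lb} for open sets $O\subseteq G$ is proved identically, since $F^{-1}(O)$ is open.

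\emph{Main obstacle.} The only nontrivial point is Step 1, namely that $J$ is genuinely lower semicontinuous and good. This is where goodness of $I$ is used: without compact sublevel sets the infimum defining $J$ need not be attained, and the level-set identity $\{J\le M\}=F(\{I\le M\})$ could fail.
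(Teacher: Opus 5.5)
The paper does not prove this statement itself; it only cites \cite[Theorem 4.2.1]{dembo1998large-deviations}. Your proposal is correct and is essentially the standard proof given there: goodness of $I$ makes the infimum defining $J$ attained, so $\{J\le M\}=F(\{I\le M\})$ is compact, and the two bounds follow by pulling closed and open sets back through the continuous map $F$. You are also right that boundedness of $F$ plays no role.
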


We will also use the following lemma, that can be seen as a corollary of Varadhan's lemma
(see \cite[Theorem 4.3.1]{dembo1998large-deviations} and~\cite[Theorem D.8]{anderson2010BookRandomMatrix}).
\begin{lem}
 \label{lem:Varadhan}
 For any $n\ge 1,$ let $\mu_n$ denote the distribution of $X_n$ and assume that $(X_n)$ satisfies an LDP at speed $a_n$ with good rate function $I.$ Let $F:E \rightarrow \mathbb R$ be a bounded continuous function. Let the distribution $\nu_n$  be defined as follows:
 \[ \mathrm{d}\nu_n(x) := \frac{1}{Z_n} \ee^{a_nF(x)} \mathrm{d}\mu_n(x),\]
 where $Z_n$ is a normalizing constant such that $\nu_n$ is a probability measure.
 Then we have
	\[ \lim_{n \rightarrow \infty} \frac{1}{a_n} \log Z_n = -  \inf_{y\in E} (I(y)- F(y)). \]
	Moreover, if we let $Y_n$ have distribution $\nu_n$,
the sequence $(Y_n)$ satisfies an LDP at speed $a_n$ with good rate function $J$
 given by
 \[ \forall x\in E, \quad J(x) = I(x) - F(x) - \inf_{y\in E} (I(y)- F(y)).\]
\end{lem}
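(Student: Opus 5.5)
This is a corollary of Varadhan's lemma, and the plan is to deduce it from the standard version (\cite[Theorem 4.3.1]{dembo1998large-deviations}, \cite[Theorem D.8]{anderson2010BookRandomMatrix}) in two steps.

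\textbf{Step 1: the partition function.} We have $Z_n = \mathbb E[\ee^{a_n F(X_n)}]$, with $F$ bounded and continuous and $I$ a good rate function. For bounded $F$ the tail (moment) condition in Varadhan's lemma holds trivially, since $\ee^{a_n \gamma F}\le \ee^{a_n\gamma\|F\|_\infty}$ for every $\gamma>1$. Varadhan's integral lemma therefore gives
\[
\lim_{n\to\infty}\frac{1}{a_n}\log Z_n=\sup_{y\in E}\bigl(F(y)-I(y)\bigr)=-\inf_{y\in E}\bigl(I(y)-F(y)\bigr).
\]
This quantity is finite, because $I$ is not identically $\infty$ (an LDP forces $\inf I=0$) and $F$ is bounded.

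\textbf{Step 2: the tilted LDP.} We first check that $J$ is a good rate function. It is nonnegative by definition of the infimum, and lower semicontinuous because $I$ is and $F$ is continuous. Its level sets are compact: $\{J\le M\}\subseteq\{I\le M+2\|F\|_\infty+c\}$ with $c$ the constant above, and this set is closed inside a compact set.

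For the upper bound, fix a closed $C$ and write
\[
\nu_n(C)=Z_n^{-1}\,\mathbb E\bigl[\ee^{a_nF(X_n)}\mathbf 1_{X_n\in C}\bigr].
\]
We apply the upper half of Varadhan's lemma to $F$ restricted to the closed set $C$. This works because the restricted laws still satisfy the LDP upper bound with rate $I$ on $C$, and the usual proof only needs the upper bound on closed sets. It yields
\[
\limsup\frac{1}{a_n}\log\mathbb E[\ee^{a_nF(X_n)}\mathbf 1_C]\le \sup_C(F-I).
\]
Subtracting the limit from Step 1 gives $\limsup\frac1{a_n}\log\nu_n(C)\le-\inf_C J$.

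For the lower bound, fix an open $O$ and a point $x\in O$ with $I(x)<\infty$. By continuity of $F$, for $\eps>0$ there is an open neighbourhood $U\subseteq O$ of $x$ on which $F\ge F(x)-\eps$. Then $\mathbb E[\ee^{a_nF}\mathbf 1_U]\ge \ee^{a_n(F(x)-\eps)}\mu_n(U)$, and the LDP lower bound for $\mu_n$ on $U$ gives
\[
\liminf\frac1{a_n}\log\nu_n(O)\ge F(x)-\eps-I(x)+\inf_E(I-F).
\]
Letting $\eps\to0$ and optimising over $x\in O$ gives $-\inf_O J$.

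There is no real obstacle here. The only point needing care is the restricted upper bound in Step 2, which is the standard covering argument: cover $\{I\le M\}\cap C$ by finitely many small balls on which $F$ varies by less than $\eps$, and use exponential tightness (automatic from goodness of $I$ on a compact $E$, or assumed otherwise) to discard the complement.
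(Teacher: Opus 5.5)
Your proof is correct and is the standard textbook argument; the paper does not prove this lemma itself but cites Varadhan's lemma in the form of \cite[Theorem D.8]{anderson2010BookRandomMatrix}, which already contains both the partition-function limit and the tilted LDP. One small remark: the restricted upper bound does not need exponential tightness, because the part of $C$ not covered by the open balls is closed and disjoint from $\{I\le M\}$, so the full LDP upper bound you already assume controls it directly.
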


The last point explains how to deduce almost sure convergence from an LDP. 
It is an easy consequence of the Borel--Cantelli lemma.
\begin{lem}
 \label{lem:asconvergence}
 Assume that $(X_n)$ satisfies an LDP at speed $n$ with good rate function $I$ and that $I$ has a unique minimizer $x^*.$
 Then $(X_n)$ converges almost surely to $x^*$.
\end{lem}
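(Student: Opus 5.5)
We prove Lemma~\ref{lem:asconvergence} by combining the LDP upper bound with compactness of level sets, and then applying Borel--Cantelli. Fix $\eps>0$ and let $F_\eps := \{x\in E : d(x,x^*)\ge \eps\}$, a closed set. The whole proof reduces to showing that
\[
c_\eps := \inf_{x\in F_\eps} I(x) > 0 .
\]
Once this holds, the upper bound~\eqref{eq:def_LDP_ub} with $a_n=n$ gives $\limsup_n \tfrac1n \log \mathbb P(X_n\in F_\eps) \le -c_\eps$. Hence for $n$ large enough, $\mathbb P(d(X_n,x^*)\ge\eps)\le \ee^{-n c_\eps/2}$, which is summable in $n$. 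This is where the speed $n$ (or any $a_n$ with $\sum_n \ee^{-c a_n}<\infty$) is used.

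The step needing care is $c_\eps>0$; this is where goodness of $I$ enters. First, since $I$ is nonnegative and has a minimizer, the LDP applied to $E$ itself (both open and closed) forces $\inf_E I = 0$, so $I(x^*)=0$. Now suppose $c_\eps=0$. Then there are $x_k\in F_\eps$ with $I(x_k)\to 0$, so eventually $x_k$ lies in the compact level set $\{I\le 1\}$. Passing to a subsequence, $x_k\to \bar x\in F_\eps$, because $F_\eps$ is closed. Lower semicontinuity gives $I(\bar x)\le \liminf_k I(x_k) = 0$, so $\bar x$ is a minimizer distinct from $x^*$. This contradicts uniqueness. If $F_\eps$ is empty the bound is trivial.

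Finally, Borel--Cantelli shows that almost surely $d(X_n,x^*)<\eps$ for all $n$ large enough. Intersecting these events over $\eps = 1/m$, $m\ge1$, a countable family, gives $X_n\to x^*$ almost surely. The argument only uses the marginal laws of the $X_n$. Borel--Cantelli (first lemma) needs no independence, so the conclusion holds for any coupling of the $X_n$ on a common probability space, consistent with the remark preceding Theorem~\ref{thm:phase_transition_231}.
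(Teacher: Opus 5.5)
Your proof is correct and is the argument the paper has in mind when it calls the lemma an easy consequence of Borel--Cantelli: goodness, lower semicontinuity and uniqueness of the minimizer give $\inf_{F_\eps} I>0$, the LDP upper bound makes $\mathbb P(d(X_n,x^*)\ge\eps)$ summable, and Borel--Cantelli then concludes for any coupling. One cosmetic fix: when $c_\eps=\infty$, replace $c_\eps/2$ by any finite positive constant.
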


\subsection{Permutons}
\label{ssec:permutons}
Let $\mathcal{P}$ be the set of permutons, that is, of probability measures $\mu$ on $[0,1]^2$ satisfying
\[ \mu([0,x] \times [0,1]) =\mu([0,1] \times [0,x])=x\]
for every $x$ in $[0,1]$ (i.e., $\mu$ has uniform marginals).
We endow the set of permutons with the topology of weak convergence.
If $\sigma$ is a permutation of size $n$, we associate with it a permuton $\mu_\sigma$ which attributes mass $1/n$ uniformly to each cell 
\[
    C_{i}^\sigma := [(i-1)/n, i/n]  \times \left[ (\sigma(i)-1) / n, \sigma(i)/n \right]  ,
    \quad 1\le i\le n \,.
\]
We say that a sequence $(\sigma_n)$ of permutations, converges to a permuton $\mu$ if $\mu_{\sigma_n}$ converges to $\mu$ weakly.
It is sometimes useful to define slight variations of $\mu_\sigma$:
we let $\mu_\sigma^\diagup$ (resp.~$\mu_\sigma^\diagdown$) be the permuton which attributes mass $1/n$ uniformly on the \emph{diagonal} (resp.~\emph{antidiagonal}) of each cell $C_{i}^\sigma$.
Since the Kolmogorov--Smirnov distances between these three permutons are bounded by $\cO(1/n)$, convergence results and large deviation principles hold for all three simultaneously.

If $\sigma$ is a permutation of size $n$ and $\alpha$ a permutation of size $k \le n$, the number of occurrences of $\alpha$ in $\sigma$ is
\[
    \occ(\alpha, \sigma) := \card{ (i_1, \dots, i_k) : 1\le i_1< \dots< i_k\le n \mbox{ and } \sigma(i_{\alpha^{-1}(1)})< \dots< \sigma(i_{\alpha^{-1}(k)}) }
\]
and the density of the pattern $\alpha$ in $\sigma$ is
\[
    \dens{\alpha}{\sigma} := \binom{n}{k}^{-1} \occ(\alpha, \sigma) \,.
\]
We say that the permutation $\sigma$ is $\alpha$-avoiding if $\dens{\alpha}{\sigma}=0$.

Similarly, if $\alpha$ is again a permutation of given size $k,$ we define the density of the pattern $\alpha$ in a permuton $\mu$ as
\[
    \dens{\alpha}{\mu} := \int k! \One_{x_1< \dots< x_k} \One_{y_{\alpha^{-1}(1)}< \dots< y_{\alpha^{-1}(k)}} \dd\mu(x_1, y_1) \dots \dd\mu(x_k, y_k) \,,
\]
and we say that the permuton $\mu$ is $\alpha$-avoiding if $\dens{\alpha}{\mu}=0$.
Note that if $\mu_\sigma$ is the permuton associated to the permutation $\sigma,$ we have in general $\dens{\alpha}{\sigma} \neq \dens{\alpha}{ \mu_\sigma}$. Nonetheless, it is easy to see that $\dens{321}{\sigma} = \dens{321}{\mu^\diagup_\sigma}$ and that $\dens{231}{\sigma} = \dens{231}{\mu^\diagdown_\sigma}$.
In particular, a permutation $\sigma$ is $321$-avoiding if and only if $\mu_\sigma^\diagup$ is $321$-avoiding, and $\sigma$ is $231$-avoiding if and only if $\mu_\sigma^\diagdown$ is $231$-avoiding.
Moreover, we know from \cite{Permutons} that the weak convergence of permutons is equivalent to the convergence of all pattern densities. 
Standard arguments show that a permuton $\mu$ is $\alpha$-avoiding if and only if there exists a sequence $(\sigma_n)$ of $\alpha$-avoiding permutations which converges to $\mu$.


\subsection{Nondecreasing subsets and coupling}
\label{ssec: nondecreasing subsets and coupling}

A subset $S$ of $\mathbb R^2$ will be called nondecreasing if one has $(x_1-x_2)(y_1-y_2) \ge 0$
for any $(x_1,y_1)$ and $(x_2,y_2)$ in $S$.
A coupling of two real-valued random variables $X$ and $Y$ is called nondecreasing
if the support of $(X,Y)$ is nondecreasing.
Given distributions $\mu$ and $\nu$ on $\mathbb R$, 
there is a unique nondecreasing coupling $(X,Y)$ of $X$ with law $\mu$ and $Y$ with law $\nu$.
Let us denote by $\mu \nearrow \nu$ the corresponding distribution of $(X,Y)$.

More precisely, if we define
\begin{equation}\label{eqn:defn_quantile_function}
    G_\mu : u\in(0,1) \mapsto \sup\{ x\in\R : \mu([0,x]) \le u \}
\end{equation}
the quantile function of $\mu$ and similarly $G_\nu$ the quantile function of $\nu$, 
we can write
\[ \mu \nearrow \nu = (G_\mu, G_\nu)_\# \textrm{Leb}_{[0,1]}. \]

We extend the notation to finite measures with the same total mass $M$
by setting
\[ \mu \nearrow \nu := M \cdot \left( \left( \frac{\mu}{M} \right) \nearrow \left( \frac{\nu}{M} \right) \right) .\]

Finally if $\mu$ is a measure on $[0,1]^2$, we denote by $\proj(\mu)$ the pair $(\mu_1,\mu_2)$,
where $\mu_1$ and $\mu_2$ are the projections of $\mu$ on the horizontal and vertical axes, respectively.
By construction it holds that $\proj(\mu_1 \nearrow \mu_2)=(\mu_1,\mu_2)$ 
and conversely if $\proj(\pi)=(\mu_1,\mu_2)$ and if $\pi$ has a nondecreasing support, 
then necessarily $\pi=\mu_1 \nearrow \mu_2$.

\part{Avoiding 321}
\label{part:321}

The goal of this part is to study the limit shape of $321$-avoiding random Mallows permutations with parameter $q_n = \ee^{\frac{\beta}{n}}.$ As explained in the introduction, we will successively establish Proposition~\ref{prop:intro-param321}, Proposition~\ref{prop:LDP_321_pi_intro}, Equation~\ref{eq:partition321} and Theorem~\ref{thm:phase_transition_321}. 
To ease the notations, we will drop the superscript $321$ in all the notation in this part.

\section{Permutations and permutons avoiding 321}
\label{sec:structure_321}

\subsection{A classical bijection for finite objects}
\label{ssec:bij321}
We say that a pair $(a,b)$ is a strict right-to-left (RL) minimum of a permutation $\sigma$ if $b=\sigma(a)<a$ and for any $c>a$, we have $\sigma(c)>b$ .
In the literature, RL minimum sometimes refers to the position $a$, or to the value $b$;
here we chose to call RL minimum the pair $(a,b)$.
In $321$-avoiding permutations, a pair $(a,b)$ is a strict RL minimum if and only if $b=\sigma(a)<a$.

For any permutation $\sigma$ of size $n,$ we denote by  $((a_1,b_1), \dots, (a_k,b_k))$  the set of its RL minima in increasing order,
that is
\begin{equation}\label{eq:condition_AB}
	1 <  a_1 < \dots <a_k \le n,
	\quad
	1 \le  b_1 < \dots <b_k < n,
	\quad\text{ and } a_i >b_i\text{ for all }1\le i \le k.
	\end{equation}
We also define
\[
\textrm{Dyck}_{n}:= \left\{(A,B)\in \mathcal P_n^2 : |A|=|B| \textrm{ and } \forall i \le |A|, \, a_i >b_i \right\},
\]
where $\mathcal P_n$ is the set of all subsets of $[n]= \{1, \ldots , n\}$ and $a_i$ (resp. $b_i$) is the $i$th smallest element of $A$ (resp. $B$).
Note that the choice of the notation $\textrm{Dyck}_{n}$ is related to the fact that this set is also in bijection with Dyck paths with $2n$ steps.

The following lemma holds, see e.g.~\cite[Section 4]{callan2007bijectionsdyckpaths321avoiding}.
\begin{lem}\label{lem:bij321}
	The application $\sigma \mapsto ((a_1,b_1), \dots, (a_k,b_k))$ which maps a permutation $\sigma$ to its list of strict RL minima in increasing order is a one-to-one correspondence between the set of  $321$-avoiding permutations of size $n$ and the set $\textrm{Dyck}_{n}.$
\end{lem}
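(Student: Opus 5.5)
We show that the map is well defined with image in $\textrm{Dyck}_n$, that it is injective, and that it is surjective.

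\emph{Well defined.} Let $\sigma$ avoid $321$, and first check the claim that $(a,b)$ is a strict RL minimum if and only if $b=\sigma(a)<a$. Suppose $\sigma(a)<a$ but some $c>a$ has $\sigma(c)<\sigma(a)$. The set $\{1,\dots,a-1\}$ has $a-1\ge\sigma(a)$ elements, and only $\sigma(a)-1$ values lie below $\sigma(a)$, one of which is $\sigma(c)$. So at most $\sigma(a)-2$ positions $i<a$ have $\sigma(i)<\sigma(a)$, and some $i<a$ has $\sigma(i)>\sigma(a)$. Then $i<a<c$ forms a $321$, a contradiction.

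Let $A$ be the set of excedance-free positions $\{a:\sigma(a)<a\}$ and $B=\sigma(A)$. Strict RL minima have increasing values as positions increase, so listing them by position gives $b_1<\dots<b_k$ with $a_i>b_i$. Hence $(A,B)\in\textrm{Dyck}_n$.

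\emph{Injectivity.} Let $\bar A=[n]\setminus A$ and $\bar B=[n]\setminus B$. For a $321$-avoiding permutation, the restriction of $\sigma$ to $\bar A$ is increasing. Indeed, an inversion $i<j$ in $\bar A$ with $\sigma(i)>\sigma(j)\ge j$ gives $\sigma(i)>j$. The values $\{1,\dots,j\}$ must then be placed at positions other than $i$, and position $i$ is among $1,\dots,j$. So some value $\le j$, hence $<\sigma(j)$, sits at a position $c>j$, and $i,j,c$ form a $321$. Therefore $\sigma$ is the unique permutation mapping $A$ to $B$ increasingly and $\bar A$ to $\bar B$ increasingly, and it is determined by $(A,B)$.

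\emph{Surjectivity.} Given $(A,B)\in\textrm{Dyck}_n$, define $\sigma$ by these two increasing matchings. The main obstacle is to check two things: that the elements of $\bar A$ become weak excedances, i.e. $\sigma(\bar a_j)\ge \bar a_j$, and that $\sigma$ avoids $321$. Then the strict RL minima of $\sigma$ are exactly $A$, with $\sigma(a_i)=b_i<a_i$.

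For the excedance condition, use counting. The $j$-th element $x=\bar a_j$ of $\bar A$ satisfies $j=x-|A\cap[1,x]|$. The $j$-th element of $\bar B$ is at least $x$ iff $|\bar B\cap[1,x-1]|<j$, i.e. $x-1-|B\cap[1,x-1]|<x-|A\cap[1,x]|$. This follows from $|B\cap[1,x-1]|\ge|A\cap[1,x]|$, which holds because $a_i\le x$ implies $b_i<a_i\le x$, and $x\notin A$ gives $b_i\le x-1$ (with $a_i\neq x$).

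For $321$-avoidance, any decreasing triple uses at most one point from each increasing class, so it must be a point of $\bar A$ above a point of $A$ above something else, in some order. Suppose $i<j$ with $\sigma(i)>\sigma(j)$. If $j\in A$ and $i\in\bar A$, then $\sigma(i)\ge i$ and all $\ell>j$ are handled as follows: every later point of $A$ has value $>\sigma(j)$, and every later point $\ell\in\bar A$ has $\sigma(\ell)\ge\ell>j>\sigma(j)$. The other case, $i\in A$ and $j\in\bar A$, cannot occur, since $\sigma(i)<i<j\le\sigma(j)$. So no $321$ exists. Finally, positions in $A$ satisfy $\sigma(a)<a$, and positions in $\bar A$ satisfy $\sigma(a)\ge a$. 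By the first step, the strict RL minima of $\sigma$ are exactly $(a_i,b_i)$, which completes the bijection.
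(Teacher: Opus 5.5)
Your proof is correct and follows the same route the paper sketches (citing Callan): the strict RL minima of a $321$-avoiding permutation are exactly the points strictly below the diagonal and form an increasing sequence, the other points form the unique increasing completion weakly above the diagonal, and conversely the two increasing matchings give a $321$-avoiding permutation with the prescribed minima. One simplification in the surjectivity step: your remark that a decreasing triple uses at most one point from each of the two increasing classes already excludes any $321$ by pigeonhole, so the case analysis that follows it is unnecessary.
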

Indeed, given the positions and values of the strict RL minima, which need to form an increasing
subsequence below the diagonal, one can complete the permutation
in a unique way by a second increasing subsequence weakly above the diagonal.
See the left part of Figure~\ref{fig:321-avoiding}.

For our purposes, it will be useful to encode these two lists as two measures, namely, if $((a_1,b_1), \dots, (a_k,b_k))$ is the list of  RL minima of the permutation $\sigma$ of size $n,$ we define
\[\pi_1^\sigma :=  \sum_{j=1}^k \Leb_{\left[\frac{a_j-1}n, \frac{a_j}n\right]} ,\qquad
\pi_2^\sigma :=  \sum_{j=1}^k \Leb_{\left[\frac{b_j-1}n, \frac{b_j}n\right]} .\]
This encoding turns out to be handy to compute the number of inversions of the permutation.
In the following, we write $\inv(\sigma)$ for the number of inversions of a permutation $\sigma$.

\begin{figure}
    \centering
    \includegraphics{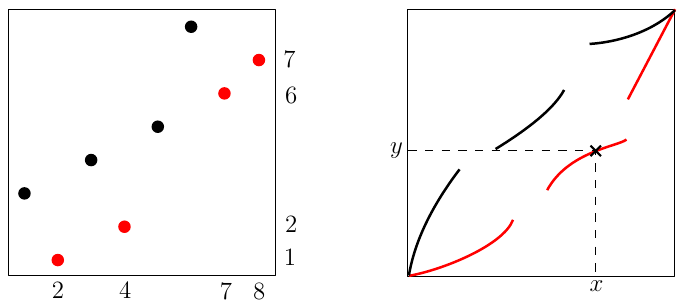}
    \caption{Left: a $321$-avoiding permutation $\sigma$, with its strict RL minima in red.
    The corresponding list encoding is $((2,1),(4,2),(7,6),(8,7))$. 
    Note that the element $(5,5)$ is a weak RL minimum 
    (in the sense that it is a fixed point of the permutation) 
    and does not appear in the list.
    Right: the support of a $321$-avoiding permuton $\mu=\Psi(\pi_1,\pi_2)$. 
    The support of $\pi_1 \nearrow \pi_2$ is in red.
    A point $(x,y)$ on the red curve should satisfy $\pi_1([0,x]) = \pi_2([0,y])$.
    For  $(\pi_1, \pi_2)$ in $\mathcal D$, we assume $\pi_1([0,x]) \le \pi_2([0,x])$, which forces $x \ge y$, i.e.~the support of $\pi_1 \nearrow \pi_2$ lies in the closed lower-right triangle $\{(x,y):x \ge y\}$.
    \label{fig:321-avoiding}}
\end{figure}

\begin{lem}\label{lem:inv_sigma}
    Let $\sigma$ be a $321$-avoiding permutation, and $((a_1,b_1),\dots,(a_k,b_k))$ be its list of strict RL minima. 
    Let also $\pi_1^\sigma$ and $\pi_2^\sigma$ be the associated measures on $[0,1]$,
    and $\rho_1^\sigma$ and $\rho_2^\sigma$ be their densities.
    We have:
    \[\inv(\sigma)=\sum_{i=1}^k (a_i-b_i) = n^2 \int_0^1 x \left( \rho^\sigma_1(x)-\rho^\sigma_2(x) \right) \mathrm{d}x \,.\]
\end{lem}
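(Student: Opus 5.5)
The plan is to count inversions by charging each one to the strict RL minimum it involves, and then to rewrite the resulting sum as an integral against $\rho_1^\sigma-\rho_2^\sigma$. Throughout we use the structure from Lemma~\ref{lem:bij321}: $\sigma$ is the union of two increasing subsequences. The first is the strict RL minima $(a_i,b_i)$, with $b_i<a_i$. The second consists of the remaining points, which satisfy $\sigma(c)\ge c$.

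First, every inversion involves exactly one strict RL minimum. Two elements of the same increasing subsequence never form an inversion. An inversion is a pair $c<d$ with $\sigma(c)>\sigma(d)$, so it must pair an element of the upper sequence with an element of the lower one. If the RL minimum came first, i.e. $c=a_i$, then $\sigma(d)<b_i$ would contradict the RL minimum property. Hence every inversion has the form $(c,a_i)$ with $c<a_i$ and $\sigma(c)>b_i$.

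Second, we count these $c$ for fixed $i$. The RL minimum property gives $\sigma(c')>b_i$ for all $c'>a_i$, so the values $\{1,\dots,b_i\}$ are taken at positions $\le a_i$. Position $a_i$ takes the value $b_i$, so the other $b_i-1$ of these values sit at positions $<a_i$. Among the $a_i-1$ positions before $a_i$, exactly $(a_i-1)-(b_i-1)=a_i-b_i$ therefore carry values $>b_i$, and each gives an inversion with $a_i$. Summing over $i$ gives $\inv(\sigma)=\sum_i (a_i-b_i)$. This step does not even need to identify the upper elements explicitly.

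Third, for the integral form, $\rho_1^\sigma$ is the indicator of $\bigcup_j[(a_j-1)/n,a_j/n]$. So $n^2\int x\rho_1^\sigma=n^2\sum_j \frac{1}{n}\cdot\frac{a_j-1/2}{n}=\sum_j(a_j-\tfrac12)$, and similarly $n^2\int x\rho_2^\sigma=\sum_j(b_j-\tfrac12)$. The two half-corrections cancel because $|A|=|B|=k$, which yields the identity.

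The only step needing care is the first one, showing that each inversion involves exactly one RL minimum and that the minimum is the right-hand element. After that, the counting in the second step is immediate.
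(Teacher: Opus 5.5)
Your proof is correct and follows essentially the same route as the paper: charge each inversion to its lower-right element, which must be an RL minimum because the remaining points form an increasing subsequence, count the $(a_i-1)-(b_i-1)=a_i-b_i$ elements above-left of it, and then evaluate the integral cell by cell. The only cosmetic difference is that you work throughout with strict RL minima, using the two-increasing-subsequence decomposition from Lemma~\ref{lem:bij321}, whereas the paper charges inversions to all RL minima and then notes that fixed points contribute $0$.
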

\begin{proof}
    Since $\sigma$ is $321$-avoiding, the elements in $\sigma$ that are not RL minima necessarily form an increasing subsequence. 
    It follows that, in each inversion of $\sigma$, the lower right element is an RL minimum. We claim that the number of inversions
    whose lower right element is a given RL minimum $(a_i,b_i)$ is $(a_i-b_i)$. Indeed, since $(a_i,b_i)$ is an RL minimum, the number of elements above and on the left of it
    is equal to the number $a_i-1$ of elements on its left, minus the number $b_i-1$ of elements below it.
    Hence the total number of inversions in $\sigma$ is $\sum (a_i-b_i)$, where the sum is taken over RL minima of $\sigma$, 
    or equivalently over strict RL minima (since fixed points contribute $0$ to the sum).
    This justifies the first equality.
    
    For the second equality, we simply observe that
    \[\int_0^1 x \rho^\sigma_1(x) \mathrm{d}x = \int_0^1 x \, \pi^\sigma_1(\mathrm{d}x) = \frac{1}{n}  \sum_{i=1}^k \frac{a_i-1/2}{n},\]
    and a similar equality holds replacing $\rho^\sigma_1$ and $\pi^\sigma_1$ by $\rho^\sigma_2$ and $\pi^\sigma_2$, and $a_i$ by $b_i$.
\end{proof}

\subsection{Parametrization of 321-avoiding permutons}

The goal of this section is to give a concrete parametrization of $321$-avoiding permutons, extending in some sense the bijection of Lemma~\ref{lem:bij321}.
For this it is useful to decompose permutons as follows:
given a permuton $\mu$ in $\mathcal P$, we denote by $\mu_{\lrtriangle}$, $\mu_{\ultriangle}$ and $\mu_{\diagup}$
the restrictions of $\mu$ respectively to the lower-right triangle $\{(x,y): x>y\}$, to the upper-left triangle $\{(x,y): x<y\}$,
and to the diagonal $\{(x,y): x=y\}$.
We start with a lemma.

\begin{lem}\label{lem:incr_supports}
Let $\mu$ be a $321$-avoiding permuton.
The supports of $\mu_{\lrtriangle} + \mu_{\diagup}$ and  $\mu_{\ultriangle} + \mu_{\diagup}$ are nondecreasing subsets of $[0,1]^2$.
\end{lem}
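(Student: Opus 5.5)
We argue by contradiction, using only that $\mu$ has zero density of $321$. By symmetry (the map $(x,y)\mapsto(y,x)$ sends $321$-avoiding permutons to $321$-avoiding permutons, because $321$ is an involution, and exchanges $\mu_{\lrtriangle}$ and $\mu_{\ultriangle}$), it suffices to treat $\mu_{\lrtriangle}+\mu_{\diagup}$. Call its support $S$; it is contained in the closed region $\{x\ge y\}$. Suppose $S$ is not nondecreasing. Then there are points $p=(x_1,y_1)$ and $q=(x_2,y_2)$ in $S$ with $x_1<x_2$ and $y_1>y_2$; equality cases such as $x_1=x_2$ with $y_1\neq y_2$ are excluded below. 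We will exhibit a third region of positive mass that completes a $321$ pattern with neighbourhoods of $p$ and $q$, contradicting $\dens{321}{\mu}=0$.

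The key step uses the uniform marginals. Since $q$ lies weakly below the diagonal, $y_2\le x_2$, and $y_2<y_1\le x_1$. Consider the rectangle $R=[0,x_1)\times(y_1,1]$ of points to the left of and above $p$. Its mass is
\[
\mu(R)=x_1-\mu([0,x_1)\times[0,y_1]) \ge x_1-\mu([0,1]\times[0,y_1])=x_1-y_1 .
\]
If $p$ is strictly below the diagonal, this is positive. Then small boxes around $p$ and $q$ have positive mass, since they lie in the support, and a shrunken version of $R$ that stays strictly to the left of and above the $p$-box also has positive mass. Taking one point from each of the three boxes, ordered by $x$-coordinate, gives a decreasing triple with positive probability, so $\dens{321}{\mu}>0$, which is a contradiction.

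The main obstacle is the degenerate case where $p$ lies on the diagonal, $x_1=y_1$. Here $R$ may have no mass. For this case we look to the right of $q$ instead. Let $R'=(x_2,1]\times[0,y_2)$ be the region to the right of and below $q$. Its mass is
\[
\mu(R')=(1-x_2)-\mu((x_2,1]\times[y_2,1]) \ge (1-x_2)-(1-y_2)=x_2-y_2 .
\]
This is positive unless $q$ is also on the diagonal. But if both $p$ and $q$ are on the diagonal, then $x_1<x_2$ forces $y_1<y_2$, which contradicts $y_1>y_2$. So at least one of $p,q$ is strictly below the diagonal, and we use $R$ or $R'$ accordingly; with $R'$, the decreasing triple is $p,q,R'$.

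It remains to handle the equality cases in the definition of nondecreasing, namely $x_1=x_2$ with $y_1\ne y_2$, or $y_1=y_2$ with $x_1\ne x_2$. Such pairs are not immediately a decreasing pair. We would treat them by a small-box perturbation: take small boxes around $p$ and $q$. Because the marginals have no atoms, the mass in these boxes cannot all concentrate on the common vertical (or horizontal) line. So we can find points of the support near $p$ and near $q$ that form a strict decreasing pair, at which point the argument above applies. I expect the care needed here, choosing the boxes so that they are strictly separated and nested correctly relative to $R$ or $R'$, to be the only genuinely fiddly part of the proof.
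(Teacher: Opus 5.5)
Your main argument (the first three paragraphs) is correct and is essentially the paper's proof. Both use the uniform marginals, together with the fact that $p$ lies weakly below the diagonal, to force positive mass in the quadrant to the upper left of $p$. The paper avoids your case split by not discarding a term. Exactly,
\[
\mu(R)=x_1-y_1+\mu\big([x_1,1]\times[0,y_1]\big),
\]
and the last term is already positive, because a neighbourhood of $q$ lies in $(x_1,1]\times[0,y_1)$. Hence $\mu(R)>0$ even when $x_1=y_1$, and the region $R'$ is never needed; your version with $R'$ is nonetheless valid.

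Your last paragraph should be deleted rather than completed. A set is called nondecreasing when $(x_1-x_2)(y_1-y_2)\ge 0$ for all pairs of its points. Pairs with $x_1=x_2$ or $y_1=y_2$ therefore never violate the property, so only strictly decreasing pairs must be ruled out, and your first three paragraphs already do this.

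Moreover, the perturbation claim you make there is false. Take the $321$-avoiding permuton obtained by blowing up $2413$, each point replaced by an increasing segment of mass $1/4$. The support of $\mu_{\lrtriangle}+\mu_{\diagup}$ is the union of the segments from $(1/2,0)$ to $(3/4,1/4)$ and from $(3/4,1/2)$ to $(1,3/4)$. It contains $(3/4,1/4)$ and $(3/4,1/2)$. However, support points near $(3/4,1/2)$ have abscissa at least $3/4$, while those near $(3/4,1/4)$ have abscissa at most $3/4$. So no strictly decreasing pair of support points exists near them.
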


\begin{proof}
    Assume for the sake of contradiction that there exist $(x_1,y_1)$ and $(x_2,y_2)$ in the support of $\mu_{\lrtriangle} + \mu_{\diagup}$ that satisfy $x_1 <x_2$, but $y_1>y_2$. 
    By construction, this support is a subset of the closed triangle $\{(x,y): x\ge y\}$, so we also have $x_1 \ge y_1$ (and $x_2 \ge y_2$).
    Since $\mu$ is a permuton, it holds
    that
    \[ 
        \mu([0,x_1] \times [0,1]) =x_1 \ge y_1 =\mu([0,1] \times [0,y_1]) \,.
    \]
    Subtracting $\mu([0,x_1] \times [0,y_1])$ to both sides, we have
    \[ 
        \mu([0,x_1] \times [y_1,1]) \ge \mu([x_1,1] \times [0,y_1]) \,.
    \]
    But the right-hand side is nonzero, since $(x_2,y_2)$ is in the support of $\mu$.
    Therefore there exists a point $(x_0,y_0)$ satisfying $x_0 <x_1$ and $y_0>y_1$ in the support of $\mu$.
    The existence of three points $(x_0,y_0)$, $(x_1,y_1)$ and $(x_2,y_2)$ in decreasing position in the support of $\mu$ contradicts the fact that $\mu$ is $321$-avoiding.
    This proves that the support of $\mu_{\lrtriangle} + \mu_{\diagup}$ is nondecreasing.
    The proof that the support of $\mu_{\ultriangle} + \mu_{\diagup}$ is nondecreasing is similar.
\end{proof}


We recall that the space $\mathcal D$ has been defined in \eqref{def:D}.
Note that $\mathcal D$ is a compact and convex subset of $\mathcal M([0,1])^2$.
It is straightforward to check that, for any $321$-avoiding permutation $\sigma$, the pair $\left( \pi_1^\sigma,\pi_2^\sigma \right)$ defined above is in $\mathcal{D}$.
For any $(\pi_1,\pi_2)$ in $\mathcal D$, the measure
\[
    \Psi(\pi_1,\pi_2) := \pi_1 \nearrow \pi_2 +(\Leb_{[0,1]}-\pi_1) \nearrow (\Leb_{[0,1]}-\pi_2) \,,
\]
where the operation $\nearrow$ has been defined in \Cref{ssec: nondecreasing subsets and coupling}, is a permuton supported on a union of two nondecreasing sets and is thus $321$-avoiding.
See the right part of Figure~\ref{fig:321-avoiding}.

\begin{prop}\label{prop:321-avoiding-permutons}
    The map $\Psi$ is a continuous surjective map from $\mathcal D$ to the set $\PAvTDU$ of $321$-avoiding permutons.
    Moreover, considering a $321$-avoiding permuton $\mu$ and its decomposition $\mu=\mu_{\lrtriangle}+\mu_{\ultriangle}+\mu_{\diagup}$ as above, we have
    \begin{equation}\label{eq:preimages}
        \Psi^{-1}(\mu) = \big\{\proj(\mu_{\lrtriangle} + \nu) : 0 \le \nu \le \mu_{\diagup} \big\} \,,
    \end{equation}
    where we recall that $\proj(\pi)$ is the pair of measures given by the projections of $\pi$ on the horizontal and vertical axes respectively.
\end{prop}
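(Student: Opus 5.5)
I would prove the three assertions (well-definedness and continuity, surjectivity, description of fibers) separately, relying throughout on Lemma~\ref{lem:incr_supports} and on the characterization recalled in \Cref{ssec: nondecreasing subsets and coupling}: a measure with nondecreasing support is determined by its projections, namely it equals $\mu_1\nearrow\mu_2$.

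\textbf{Well-definedness and continuity.} For $(\pi_1,\pi_2)\in\mathcal D$, both $\Leb-\pi_i$ are nonnegative and have equal masses, so $\Psi(\pi_1,\pi_2)$ is a measure of total mass $1$. Its marginals are $\pi_i+(\Leb-\pi_i)=\Leb$, so it is a permuton. It is $321$-avoiding because any three points of its support include two lying in the same nondecreasing support. For continuity, I would show that $(\mu,\nu)\mapsto\mu\nearrow\nu$ is continuous on pairs of finite measures of equal mass. When the mass $M$ tends to $M_\infty>0$, the quantile functions of the normalized measures converge Lebesgue-a.e., and dominated convergence then gives weak convergence of $(G_\mu,G_\nu)_\#\Leb$. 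When $M_\infty=0$, the mass vanishes and convergence to $0$ is trivial. Since the marginals in $\mathcal D$ are subuniform, they have no atoms, which removes any difficulty with discontinuity points.

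\textbf{Surjectivity and the fibers.} Let $\mu$ be $321$-avoiding and take $0\le\nu\le\mu_\diagup$. Set $(\pi_1,\pi_2)=\proj(\mu_\lrtriangle+\nu)$. Then $\pi_i\le\Leb$, the masses agree, and $\pi_1([0,x])\le\pi_2([0,x])$ holds because the measure is supported in $\{x\ge y\}$. Hence $(\pi_1,\pi_2)\in\mathcal D$. By Lemma~\ref{lem:incr_supports}, $\mu_\lrtriangle+\nu$ has nondecreasing support (it is contained in that of $\mu_\lrtriangle+\mu_\diagup$), so it equals $\pi_1\nearrow\pi_2$. Likewise $\mu_\ultriangle+(\mu_\diagup-\nu)$ has nondecreasing support and projections $\Leb-\pi_i$, so it equals $(\Leb-\pi_1)\nearrow(\Leb-\pi_2)$. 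Summing gives $\Psi(\pi_1,\pi_2)=\mu$. This proves surjectivity and the inclusion $\supseteq$ in \eqref{eq:preimages}.

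For $\subseteq$, suppose $\Psi(\pi_1,\pi_2)=\mu$. As illustrated in Figure~\ref{fig:321-avoiding}, the condition $\pi_1([0,x])\le\pi_2([0,x])$ forces $\pi_1\nearrow\pi_2$ to be supported in $\{x\ge y\}$. Symmetrically, since $\Leb-\pi_1$ dominates $\Leb-\pi_2$ on initial segments, the second term is supported in $\{x\le y\}$. Comparing the restrictions to the open triangles gives $\pi_1\nearrow\pi_2=\mu_\lrtriangle+\nu$, where $\nu$ is its restriction to the diagonal, and $\nu\le\mu_\diagup$. Thus $(\pi_1,\pi_2)=\proj(\mu_\lrtriangle+\nu)$.

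\textbf{Main obstacle.} The key step is the support claim: if $F_1(x)\le F_2(x)$ for all $x$, where $F_i$ are the cumulative functions, then the quantile coupling satisfies $G_{\pi_1}(u)\ge G_{\pi_2}(u)$. I would prove this directly from the definition \eqref{eqn:defn_quantile_function}: the inequality $F_1\le F_2$ gives $\{x:F_2(x)\le u\}\subseteq\{x:F_1(x)\le u\}$, and taking suprema yields the claim. Some care is needed with the normalization by $M$ and with closures of supports.
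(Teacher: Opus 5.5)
Your proposal is correct and follows the paper's proof essentially step by step. Continuity comes from a.e.\ convergence of the quantile functions and push-forward of Lebesgue measure. The inclusion $\supseteq$, and hence surjectivity, comes from Lemma~\ref{lem:incr_supports} and uniqueness of the nondecreasing coupling. The inclusion $\subseteq$ comes from noting that $\pi_1\nearrow\pi_2$ and $(\Leb_{[0,1]}-\pi_1)\nearrow(\Leb_{[0,1]}-\pi_2)$ live in the closed lower-right and upper-left triangles respectively. You also supply details the paper leaves implicit: that $\proj(\mu_{\lrtriangle}+\nu)\in\mathcal D$, the mass-zero limit case, and the quantile inequality behind the support claim.

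One minor quibble: absence of atoms does not control the jumps of the quantile functions, which come from gaps in the support rather than from atoms. This remark is unnecessary anyway, since convergence at all but countably many points already suffices for your dominated-convergence argument.
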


\begin{proof}
    We first prove that $\Psi$ is a continuous mapping. 
    Consider a sequence $(\pi^n_1,\pi^n_2)$ converging weakly to $(\pi_1,\pi_2)$. 
    Formally, we can write 
    \[\pi^n_1 \nearrow \pi^n_2= (G_1^n,G_2^n)_{\#}\Leb_{[0,M_n]},\]
    where $G_1^n$ and $G_2^n$ are the quantile functions of $\pi^n_1$ and $\pi^n_2$,
    and $\Leb_{[0,M_n]}$ is the Lebesgue measure on the interval $[0,M_n]$ (of total mass $M_n$). If $M=\pi_1([0,1])= \pi_2([0,1]),$
    the weak convergence of $\pi_1^n$ to $\pi_1$ implies that $M_n=\pi_1^n([0,1])$ tends to $M$, and consequently $\Leb_{[0,M_n]}$ converges in total variation distance to $\Leb_{[0,M]}$.
    Moreover, we have the convergence of the quantile functions $G^n_1$, $G^n_2$ to the quantile functions $G_1$ and $G_2$ of $\pi_1$ and $\pi_2$, at least at continuity points of $G_1$, $G_2$.
    Since $G_1$, $G_2$ have at most countably many discontinuity points, Lemma~\ref{lem:cv_push_forward} applies, and $\pi^n_1 \nearrow \pi^n_2$ converges weakly to $\pi_1 \nearrow \pi_2$.
    Similarly, $(\Leb_{[0,1]}-\pi^n_1) \nearrow (\Leb_{[0,1]}-\pi^n_2)$ converges weakly to $(\Leb_{[0,1]}-\pi_1) \nearrow (\Leb_{[0,1]}-\pi_2)$, which proves continuity.
    
    We now prove \eqref{eq:preimages}, which will imply the surjectivity of $\Psi$, since the right-hand side is always nonempty.
    Let $\mu$ be a $321$-avoiding permuton, and $(\pi_1,\pi_2)$ a pair in $\mathcal D$ such that $\Psi(\pi_1,\pi_2)=\mu$.
    The condition \enquote{$\pi_1([0,x]) \le \pi_2([0,x])$ for all $x$} implies that
    $\pi_1 \nearrow \pi_2$ is supported on the {\em closed} lower-right triangle $\{(x,y): x \ge y\}$ (see Figure~\ref{fig:321-avoiding}),
    while $(\Leb_{[0,1]}-\pi_1) \nearrow (\Leb_{[0,1]}-\pi_2)$ is supported on the closed upper-left triangle $\{(x,y): x \le y\}$.
    Hence we have
    \[  \mu_{\lrtriangle} \le \pi_1 \nearrow \pi_2 \le  \mu_{\lrtriangle} + \mu_{\diagup}. \]
    Hence, there is a measure $\nu \leq \mu$ such that $\pi_1 \nearrow \pi_2=\mu_{\lrtriangle}+\nu$, so $(\pi_1,\pi_2)=\proj(\mu_{\lrtriangle} + \nu)$.
    
    It remains to prove the reverse inclusion. 
    Let $\nu \le \mu_{\diagup}$, and set $(\pi_1,\pi_2)=\proj(\mu_{\lrtriangle} + \nu)$. 
    By Lemma~\ref{lem:incr_supports}, the support of $\mu_{\lrtriangle} + \mu_{\diagup}$ is nondecreasing, and this also holds for $\mu_{\lrtriangle} + \nu$. 
    By uniqueness of the nondecreasing coupling, we have
    \begin{equation}\label{eq:lower_part}
        \mu_{\lrtriangle} + \nu=\pi_1 \nearrow \pi_2.
   \end{equation}
    Similarly, we have $(\Leb_{[0,1]}-\pi_1,\Leb_{[0,1]}-\pi_2)=\proj(\mu_{\ultriangle} + (\mu_{\diagup}-\nu))$, and $\mu_{\ultriangle} + (\mu_{\diagup}-\nu)$ has a nondecreasing support, so
    \begin{equation}\label{eq:upper_part}
        \mu_{\ultriangle} + (\mu_{\diagup}-\nu)=(\Leb_{[0,1]}-\pi_1) \nearrow (\Leb_{[0,1]}-\pi_2).
   \end{equation}
   Combining equations \eqref{eq:lower_part} and \eqref{eq:upper_part} gives $\mu=\Psi(\pi_1,\pi_2)$, as wanted.
\end{proof}

\section{Large deviation principle for 321-avoiding permutations}

We recall that
\[
\mathcal D := \left\{ (\pi_1, \pi_2) \in \mathcal M([0,1])^2 :
\begin{array}{ll} \pi_1 \leq \Leb, \pi_2 \leq \Leb,
\pi_1([0,1]) = \pi_2([0,1])\\ \text{and }\forall x\in [0,1], \pi_1([0,x]) \le \pi_2([0,x])
\end{array}\right\}
\]
and define
\[
\mathcal S := \{ \pi \in \mathcal M([0,1]) : \pi \leq \Leb\} 
\]
the space of subuniform measures on $[0,1]$. 
The space $\mathcal S$ can be endowed with the Kolmogorov distance
\[ d(\pi_1, \nu_1) := \sup_{x\in [0,1]} |\pi_1([0,x]) - \nu_1([0,x])|,\]
which induces the topology of weak convergence (since measures in $\mathcal S$ have no atoms).
We extend this to $\mathcal S^2$ (and thus $\mathcal D$) by taking the maximum of the distances between coordinates.
For $\pi \in \mathcal S^2$ and $\varepsilon >0,$ we denote by
\[ B(\pi, \varepsilon) := \{ \nu \in \mathcal S^2 : d(\pi, \nu) \le \varepsilon\}\]
the ball of center $\pi$ and radius $\varepsilon.$

Our goal is to prove the LDP for $321$-avoiding permutations, that is, Proposition~\ref{prop:LDP_321_pi_intro}.
We start by introducing some notation. In the following, if $(\sigma_n)$ is a sequence of permutations, we write $\left( \pi_1^n, \pi_2^n \right)$ instead of $\left( \pi_1^{\sigma_n}, \pi_2^{\sigma_n} \right).$
Let $\mathcal P_n$ be the set of subsets of $\{1,\dots,n\}$. For $A \in \mathcal{P}_n$, we let $\nu_n^A$ be the measure $\sum_{a \in A}  \Leb_{[\frac{a-1}{n}, \frac a n )}$, which lives in $\mathcal S$.

By Lemma~\ref{lem:bij321}, the set $\mathrm{Dyck}_{n}$ is in bijection with $321$-avoiding permutations of size $n.$
Therefore, if $\sigma_n$ is a uniform random $321$-avoiding permutation of size $n$, then $\left( \pi_1^n, \pi_2^n \right)$ has the same distribution as $(\nu_n^A,\nu_n^B)$,
where $(A,B)$ is taken uniformly at random in $\mathrm{Dyck}_{n}$.
We will compare this model with a related but simpler model:
let $A'$ and $B'$ be two uniform random subsets of $\{1,\dots,n\}$,
independent from each other. 
Note that $(\nu_n^{A'},\nu_n^{B'})$ belongs to $\mathcal S^2$ but not necessarily to $\mathcal D.$

We define the continuous function $F_n^{A'}$ which is affine on each interval $[\frac{a-1}{n}, \frac a n ]$ and such that
\[F_n^{A'}(x)=\frac{1}{n} \sum_{i=1}^{nx} \One[i \in A']\]
when $xn$ is an integer. 
Then $F_n^{A'}$ is a normalized nondecreasing random walk with $\mathrm{Bernoulli}(1/2)$ steps.
From \cite[Theorem 5.1.2]{dembo1998large-deviations}, it follows
that $F_n^{A'}$ satisfies a large deviation principle\footnote{
    \cite[Theorem 5.1.2]{dembo1998large-deviations} is in fact stated for a càdlàg version of $F_n$,
    but both versions are exponentially equivalent \cite[Lemma 5.1.4]{dembo1998large-deviations}.
    Hence, large deviation principles for both versions are equivalent~\cite[Theorem 4.2.13]{dembo1998large-deviations}.
} at speed $n$ with rate function
$H_1(f)= \int_{0}^1 I(f'(x)) \mathrm{d}x$,
where 
\[I(y) := \begin{cases}  y \log(y)+(1-y)\log(1-y) +\log(2) & \text{if $0 \le y \le 1$};\\
\infty &\text{otherwise}\end{cases}\] is the Fenchel--Legendre transform of the moment generating function of a 
$\mathrm{Bernoulli}(1/2)$ random variable.
Note that $H_1(f)=\infty$ unless $f'$ takes its values in $[0,1]$ almost everywhere. But a function $f$ with $0 \le f' \le 1$ a.e.~can be identified with the measure $f' \mathrm{d}\Leb$, and the $L^\infty$ topology on such functions coincides with the Kolmogorov distance
on the set $\mathcal S$ of subuniform measures.
Hence the above large deviation principle can be rephrased as a large deviation principle on measures: if $A'$ is chosen uniformly at random in $\mathcal{P}_n$, then the sequence $(\nu_n^{A'})$
 satisfies a large deviation principle  on the set $\mathcal S$ at speed $n$ with rate function $H_1$ where
\[ H_1(\pi)= \int_{0}^1 \rho \log(\rho )+(1-\rho )\log(1-\rho )+\log(2) \]
if $\pi$ has density $\rho.$

Since $\nu_n^{B'}$ is an independent copy of $\nu_n^{A'}$,
it follows that the pair $\left( \nu_n^{A'},\nu_n^{B'}\right)_{n \ge 1}$
satisfies a large deviation principle on $\mathcal S^2$ at speed $n$ with rate function $H(\pi'_1,\pi'_2)=H_1(\pi'_1)+H_1(\pi'_2)$,
which coincides with the definition given in Proposition \ref{prop:LDP_321_pi_intro}.
We note for later use that the same large deviation principle holds if we take $A'$ and $B'$ to be uniform random subsets of $\{1,\dots,n-1\}$
(indeed, the two models are exponentially equivalent in the sense of \cite[Definition 4.2.12]{dembo1998large-deviations}).

\begin{proof}[Proof of Proposition~\ref{prop:LDP_321_pi_intro}]
As explained in Section \ref{ssec:LDP}, as $\mathcal D$ is compact, it is enough to show that, for any $\pi \in \mathcal D,$
\begin{equation}\label{LDP:ub}
 \lim_{\varepsilon \to 0} \limsup_{n \to \infty} \frac{1}{n} \log \mathbb P\big( \left( \pi_1^n, \pi_2^n \right) \in B(\pi, \varepsilon) \big) \le - H(\pi),
\end{equation}
and
\begin{equation}\label{LDP:lb}
 \lim_{\varepsilon \to 0} \liminf_{n \to \infty} \frac{1}{n} \log\mathbb P\big( \left( \pi_1^n, \pi_2^n \right) \in B(\pi, \varepsilon)  \big) \ge - H(\pi).
\end{equation}
We start with the upper bound \eqref{LDP:ub}, which is easy to establish.
The number of Dyck paths of length $2n$ is the Catalan number $C_n = \frac{1}{n+1}\binom{2n}{n}.$ From the Stirling formula, it is easy to show that $C_n=\frac{1}{\sqrt{\pi}}n^{-3/2}4^n(1+o(1)).$ Consequently, for any  $\pi \in \mathcal D,$ we have
\begin{eqnarray*}
 P\big( \left( \pi_1^n, \pi_2^n \right) \in B(\pi, \varepsilon)  \big)& = &
 \frac{1}{C_n}  \#  \left\{ (A,B) \in  \mathrm{Dyck}_n : (\nu_n^A, \nu_n^B) \in B(\pi, \varepsilon)\right\}\\
& \le & \frac{\#  \left\{ (A',B') \in \mathcal P_{n}^2 : (\nu_n^{A'}, \nu_n^{B'}) \in B(\pi, \varepsilon)\right\}}{4^{n}} \frac{4^{n} }{C_n} \\
& \le  &   \mathbb P \left( \left(\nu_n^{A'}, \nu_n^{B'}\right) \in B(\pi, \varepsilon) \right) \, n^{3/2} \, (\sqrt{\pi}+o(1)).
 \end{eqnarray*}
Therefore,
\[\! \lim_{\varepsilon \to 0} \limsup_{n \to \infty} \frac{1}{n} \log\mathbb P\left( \left( \pi_1^n, \pi_2^n \right) \in B(\pi, \varepsilon\right)) \le \lim_{\varepsilon \to 0} \limsup_{n \to \infty} \frac{1}{n} \log \mathbb P\left( \left( \nu_n^{A'}, \nu_n^{B'}\right) \in B(\pi, \varepsilon)\right) \le - H(\pi),\]
where the second inequality is given by the large deviation principle for $(\nu_n^{A'}, \nu_n^{B'})$.
If $\pi \in \mathcal S^2 \setminus \mathcal D,$ for $\varepsilon$ small enough $B(\pi, \varepsilon) \cap \mathcal{D} = \emptyset$ so that
$ \mathbb P\left( \left( \pi_1^n, \pi_2^n \right) \in B(\pi, \varepsilon\right)) =0.$

We now prove the lower bound. Let $\pi = (\pi_1, \pi_2) \in \mathcal D$. 
We denote by $F_1(x) = \pi_1([0,x])$ and   $F_2(x) = \pi_2([0,x])$ the respective cumulative distribution functions of $\pi_1$ and $\pi_2$.
By definition of $\mathcal D$, for all $ x \in [0,1]$, we have $0 \le F_1(x) \le F_2(x) \le x$ and $F_1(1)=F_2(1)$.
Consider a pair $(A',B')$ in $\mathcal P_{n-1}^2$ (note the shift of index, in particular $n \notin B'$) such that $(\nu_n^{A'},\nu_n^{B'})$ is in $B(\pi ,\varepsilon)$.
The goal is to associate with it a pair $(A,B)$ in $\Dyck_n$ such that $(\nu_n^A,\nu_n^B)$ is also close to $\pi$.

First observe that $(\nu_n^{A'},\nu_n^{B'}) \in B(\pi ,\varepsilon)$ implies that, for all $1\le k \le n,$
\[ \left| \frac{1}{n} \sum_{a \in A'} \One[a \le k] - F_1\left(\frac{k}{n}\right)\right| \le \varepsilon,\]
and
\[ \left| \frac{1}{n} \sum_{b \in B'} \One[b \le k] - F_2\left(\frac{k}{n}\right)\right| \le \varepsilon.\]
Hence, using $ F_1\left(\frac{k}{n}\right)\le  F_2\left(\frac{k}{n}\right),$ we get that
\begin{equation}\label{eq:discrepancy}
 \min_{1 \le k \le n} \left( \sum_{b \in B'} \One[b < k] - \sum_{a \in A'} \One[a < k] \right) \ge - 2\varepsilon n.
\end{equation}
Also, since $F_1(1)=F_2(1)$, setting $\delta=|B'|-|A'|$, we have $|\delta| \le 2 \varepsilon n$.
We now define $A$ as follows:
\begin{itemize}
    \item If $|A'| > 2 \varepsilon n$, we first let $A''$ be obtained from $A'$ by removing its
    $\lfloor 2\varepsilon n \rfloor +1$ smallest elements. Then we let $A$ be obtained from $A''$ by adding
    the $\lfloor 2\varepsilon n \rfloor+\delta+1$ largest elements of $\{1,\dots,n\}$ which are not in $A''$.
    \item If $|A'| \le 2 \varepsilon n$, we let $A=\{n-|B'|+1,\dots,n\}$.
\end{itemize}
Finally simply set $B=B'$. 
We claim that $(A,B)$ is in $\Dyck_n$. Let us prove it in the first case above, the second case being easier. 
That $|A|=|B|$ holds by construction.
To check that $a_i > b_i$ for all $i \le |A|$, we verify the equivalent assertion: for all $k \le n$
\begin{equation}\label{eq:ToCheck_A_Dominates_B}
    \sum_{b \in B} \One[b < k] \ge \sum_{a \in A} \One[a < k],
\end{equation}
with a strict inequality whenever $k \in A$.
We set $\bar{a}=\max(A' \setminus A'')$ and $\tilde{a}=\min(A \setminus A'')$, i.e.~these are respectively the largest element that has been removed and the smallest element that has been added to go from $A'$ to $A$.
If $\bar{a} \ge \tilde{a}$, then $A$ consists of the $|B|$ largest elements in $\{1,\dots,n\}$, and $a_i>b_i$ holds trivially for all $i \le |A|$ (recall that $n \notin B$ by construction).
So assume $\bar{a} < \tilde{a}$, and let us prove \eqref{eq:ToCheck_A_Dominates_B}.
\begin{itemize}
    \item If $k \le \bar{a}$, then $\sum_{a \in A} \One[a \le k]=0$
    and \eqref{eq:ToCheck_A_Dominates_B} holds trivially.
    \item If $k \ge \tilde{a}$, then $A$ contains every element larger than $k$. Consequently, we have
    \[\sum_{a \in A} \One[a \ge k]=n-k+1 > \sum_{b \in B} \One[b \ge k],\]
    since $n \notin B$. Since $|A|=|B|$, this implies \eqref{eq:ToCheck_A_Dominates_B} with a strict inequality.
    \item Finally, if $\bar{a} < k < \tilde{a}$, then we have
    \[\sum_{a \in A} \One[a \le k]= \sum_{a \in A'} \One[a \le k] -  \lfloor 2\varepsilon n \rfloor-1.\]
    Indeed, the $\lfloor 2\varepsilon n \rfloor +1$ elements of the set difference $A' \setminus A$ are all smaller than $k$,
    while the elements of the  set difference $A \setminus A'$ are all larger than $k$.
    The strict version of inequality~\eqref{eq:ToCheck_A_Dominates_B} then follows from \eqref{eq:discrepancy}.
\end{itemize}
Thus \eqref{eq:ToCheck_A_Dominates_B} holds for all $k$ (with a strict inequality when $k \in A$), and $(A,B)$ lies in $\Dyck_n$ as claimed.

Moreover, since we have transformed $A'$ into $A$ by removing at most $ 2\varepsilon n +1$ and adding at most $2\varepsilon n+\delta+1$ elements, we have
\[d(\nu_n^A,\nu_n^{A'}) \le 4\varepsilon+\tfrac{\delta+2}n \le 7 \eps\]
for $n$ sufficiently large since $\delta \le 2 \eps n$. 
Let us call $T$ the map associating $(A,B)$ with $(A',B')$. 
A preimage $(A',B')$ of $(A,B)$ is determined by the set of $k \le 7\eps n$ elements that have been added to or removed from $A'$ to obtain $A$. 
Choose $\varepsilon$ small enough so that $7\varepsilon <1/2$. 
Then, for a given $(A,B) \in \textrm{Dyck}_n$, the number of its preimages by $T$ is at most
\[\sum_{k \le 7 \eps n} \binom{n}{k} \le 7 \eps n \binom{n}{7\eps n} \le \frac{ e \sqrt{7  \eps n}}{2\pi \sqrt{1-7\eps}} ( (7\eps)^{7\eps} (1-7\eps)^{1-7\eps})^{-n},\]
where we used that $k \mapsto \binom{n}{k}$ is increasing if $k<n/2$ and $k!\ee^k k^{-k-1/2} \in [\sqrt{2\pi},e]$ for all $k \ge 1$.

Since $T$ maps any $(A',B')$ in $\mathcal P_{n-1}^2$ with $(\nu_n^{A'},\nu_n^{B'}) \in B(\pi ,\varepsilon)$ to a pair $(A,B)$ in $\textrm{Dyck}_n$ with $(\nu_n^A,\nu_n^B) \in B(\pi , 8 \varepsilon)$, we have
\begin{multline}
    \# \{(A,B) \in \textrm{Dyck}_n : (\nu_n^A,\nu_n^B) \in B(\pi, 8\varepsilon)\} \ge
\frac{2\pi \sqrt{1-7\eps}}{ e \sqrt{7 \eps n}} ( (7\eps)^{7\eps} (1-7\eps)^{1-7\eps})^n \\
\cdot \# \{(A',B') \in \mathcal P_{n-1}^2: (\nu_n^{A'},\nu_n^{B'}) \in B(\pi ,\varepsilon)\}.
\end{multline}
Therefore, if $(A,B)$ and $(A',B')$ are uniformly distributed  in $\Dyck_n$ and $\mathcal P_{n-1}^2$ respectively,
\begin{multline*}
    \liminf_{n \to \infty} \frac 1 n \log \mathbb P\left( (\nu_n^A,\nu_n^B) \in B(\pi, 8\varepsilon) \right) \ge 7\eps \log(7\eps) +(1-7\eps)\log(1-7\eps) \\ + \liminf_{n \to \infty} \frac 1 n \log\mathbb P\left( (\nu_n^{A'},\nu_n^{B'}) \in B(\pi ,\varepsilon) \right) .
\end{multline*}
Letting $\eps$ go to $0$ and using the large deviation principle for $(\nu_n^{A'},\nu_n^{B'})$ give \eqref{LDP:lb}. 
This concludes the proof of Proposition \ref{prop:LDP_321_pi_intro}.
\end{proof}

From there, it is straightforward to deduce the corresponding LDP for the pair of measures associated
with a random Mallows permutation, conditioned to avoid 321. 
Omitting the superscript $321,$ let us denote by $\tau_n^{\beta}$
a $321$-avoiding Mallows random permutation with parameters $n \geq 1$ and $q_n = \ee^{\frac{\beta}{n}}.$
To lighten notation, we write $(\pi_1^{[n,\beta]}, \pi_2^{[n,\beta]}) := \left( \pi_1^{\tau_n^{\beta}}, \pi_2^{\tau_n^{\beta}} \right)$.
Finally, for $\beta \in \R$, we define the action $A_{\beta}(\pi_1, \pi_2)$ on $\mathcal{D}$ by
\begin{equation}\label{eqn:defn_action_321}
A_{\beta}(\pi_1, \pi_2) := H(\pi_1, \pi_2) - \beta \int_0^1 x \left( \rho_1(x)-\rho_2(x) \right) \mathrm{d}x,
\end{equation}
where $\rho_1$ and $\rho_2$ are the respective densities of $\pi_1$ and $\pi_2$. Then, we have the following LDP.
\begin{cor}\label{cor:LDP_Mallows_321}
    Let $\beta \in \R$.
    The pair $(\pi_1^{[n,\beta]}, \pi_2^{[n,\beta]})$ satisfies a large deviation principle on $\mathcal D$ at speed $n$ with rate function
    \[ A_{\beta}(\pi_1,\pi_2)-\inf(A_{\beta}).\]
\end{cor}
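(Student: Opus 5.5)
The plan is to derive the corollary from Proposition~\ref{prop:LDP_321_pi_intro} via Lemma~\ref{lem:Varadhan} (Varadhan's lemma), applied on the compact space $\mathcal D$ with $a_n=n$. By Lemma~\ref{lem:bij321}, the law of $(\pi_1^{[n,\beta]},\pi_2^{[n,\beta]})$ is obtained from the law $\mu_n$ of $(\pi_1^{\sigma_{n,321}},\pi_2^{\sigma_{n,321}})$ by tilting with the density proportional to $q_n^{\inv(\sigma)}=\ee^{\frac{\beta}{n}\inv(\sigma)}$. By Lemma~\ref{lem:inv_sigma}, this equals $\exp\big(n\,F(\pi_1^\sigma,\pi_2^\sigma)\big)$, where
\[ F(\pi_1,\pi_2) := \beta\int_0^1 x\,\pi_1(\dd x) - \beta\int_0^1 x\,\pi_2(\dd x) = \beta\int_0^1 x\,(\rho_1(x)-\rho_2(x))\,\dd x. \]
So the Mallows measure is exactly $\dd\nu_n = Z_n^{-1}\ee^{nF}\dd\mu_n$, with $Z_n = Z_n^{\beta,321}/C_n$. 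The tilt is exact, with no error term, since Lemma~\ref{lem:inv_sigma} is an identity at finite $n$.

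Next, we check the hypotheses of Lemma~\ref{lem:Varadhan}. The function $F$ is bounded on $\mathcal D$ by $|\beta|$, since both measures have mass at most $1$. It is continuous for the weak topology, because $x\mapsto x$ is a bounded continuous test function on $[0,1]$. Alternatively, integrating by parts gives $\int x\,\pi_i(\dd x) = \pi_i([0,1]) - \int_0^1 \pi_i([0,x])\,\dd x$, which is Lipschitz for the Kolmogorov distance.

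The rate function $H$ is good, since $\mathcal D$ is compact and $H$ is lower semicontinuous. The lower semicontinuity comes from Proposition~\ref{prop:LDP_321_pi_intro}: a rate function is by definition lower semicontinuous. Concretely, this holds because $H$ is the restriction of the Mogulskii rate function, which is a convex integral functional.

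Lemma~\ref{lem:Varadhan} then gives an LDP at speed $n$ with rate function $H - F - \inf_{\mathcal D}(H-F) = A_\beta - \inf A_\beta$, as claimed. The same lemma also yields $\frac1n\log(Z_n^{\beta,321}/C_n)\to -\inf A_\beta$, which will be useful later for Proposition~\ref{prop:estimateZ}.

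The only point requiring any care is continuity and boundedness of $F$, which is routine here. I expect no real obstacle.
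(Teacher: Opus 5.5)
Your proposal is correct and follows essentially the same route as the paper. The paper also rewrites the Mallows weight via Lemma~\ref{lem:inv_sigma} as an exact tilt $\exp\big(n\beta\int_0^1 x(\rho_1-\rho_2)\,\dd x\big)$ of the uniform law, and then applies Proposition~\ref{prop:LDP_321_pi_intro} together with Lemma~\ref{lem:Varadhan}. Your checks that $F$ is bounded and weakly continuous, and that $H$ is good on the compact space $\mathcal D$, simply make explicit hypotheses the paper leaves implicit.
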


\begin{proof} By Lemma \ref{lem:inv_sigma},  the law of $(\pi_1^{[n,\beta]}, \pi_2^{[n,\beta]})$ is absolutely continuous with respect to the law of $(\pi_1^{[n,0]}, \pi_2^{[n,0]})$, with Radon--Nikodym derivative given by
    \[ \frac{Z_n^0}{Z_n^{\beta}} \exp \left( \frac{\beta}{n} \inv(\sigma) \right) = \frac{Z_n^0}{Z_n^{\beta}} \exp \left( \beta n \int_0^1 x(\rho_1(x)-\rho_2(x)) \right) \mathrm{d}x, \]
    where $\rho_1$ and $\rho_2$ are the respective densities of $\pi_1$ and $\pi_2$ with respect to the Lebesgue measure.
    The corollary now follows from
    Proposition~\ref{prop:LDP_321_pi_intro} and Varadhan's Lemma \ref{lem:Varadhan}.
\end{proof}

The limit shape will be determined by minimizing the function $A_\beta$; this is detailed in the next section. Before getting there, we make a remark on these LDPs.
For this, we observe that, by construction, for any 321-avoiding permutation $\sigma$, we have $\mu_{\sigma}^\diagup=\Psi(\pi_1^\sigma,\pi_2^\sigma)$.
Since $\Psi$ is a continuous mapping,
using  the contraction principle for large deviations (Proposition~\ref{prop:contractionprinciple}), it is possible to deduce from Proposition \ref{prop:LDP_321_pi_intro} an LDP in the space of permutons. Although we won't use it in the sequel, we think it may be interesting \emph{per se}.

\begin{cor}\label{cor:LDP_uniform_321}
    For all $n \geq 1$, let $\sigma_n$ be a uniform random $321$-avoiding permutation of size $n$.
    Then, the permutons $\mu_{\sigma_n}^\diagup$ satisfy a large deviation principle at speed $n$ with rate function
    \[\overline{H}(\mu) := \left\{
    \begin{array}{ll}
      \inf_{0 \le \nu \le \mu_{\diagup}} H(\proj(\mu_{\lrtriangle}+\nu)) & \textrm{if } \mu \in  \PAvTDU,\\
      \infty & \textrm{otherwise.}
    \end{array}
\right.\]
The same LDP holds for the permutons $\mu_{\sigma_n}.$
\end{cor}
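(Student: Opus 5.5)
The plan is to apply the contraction principle (Proposition~\ref{prop:contractionprinciple}) to the LDP of Proposition~\ref{prop:LDP_321_pi_intro}, using the continuous map $\Psi$ of Proposition~\ref{prop:321-avoiding-permutons}. First, $\mathcal D$ is compact, so the rate function $H$ of Proposition~\ref{prop:LDP_321_pi_intro} is automatically good. A lower semicontinuous function on a compact space has closed, hence compact, level sets; lower semicontinuity of $H$ is part of its being a rate function, and it also follows from convexity of $\rho\mapsto\rho\log\rho+(1-\rho)\log(1-\rho)$. Next, for every $321$-avoiding $\sigma$ we have $\mu_\sigma^\diagup=\Psi(\pi_1^\sigma,\pi_2^\sigma)$. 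To check this, note that $\pi_1^\sigma\nearrow\pi_2^\sigma$ is the uniform measure on the diagonals of the cells of the strict RL minima. Likewise, $(\Leb-\pi_1^\sigma)\nearrow(\Leb-\pi_2^\sigma)$ is the uniform measure on the diagonals of the remaining cells. Both claims hold because each family of points is increasing, so the nondecreasing coupling of the corresponding unions of intervals pairs them cell by cell. Since $\Psi$ is continuous, the contraction principle then gives an LDP for $\mu^\diagup_{\sigma_n}$ on the compact space $\mathcal P$ of permutons. Its rate function is $\overline H(\mu)=\inf\{H(\pi):\Psi(\pi)=\mu\}$, with the convention $\inf\emptyset=\infty$.

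We then identify this infimum. If $\mu\notin\PAvTDU$, then $\mu$ is not in the image of $\Psi$ (this image is exactly $\PAvTDU$ by surjectivity and by $321$-avoidance of $\Psi(\pi)$), so the infimum is $+\infty$. If $\mu\in\PAvTDU$, then \eqref{eq:preimages} describes the fibre as $\{\proj(\mu_{\lrtriangle}+\nu):0\le\nu\le\mu_\diagup\}$, which gives exactly the stated formula for $\overline H$.

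Finally, we transfer the LDP from $\mu_{\sigma_n}^\diagup$ to $\mu_{\sigma_n}$. As noted in Section~\ref{ssec:permutons}, the two permutons differ by $\cO(1/n)$ in Kolmogorov--Smirnov (or any metric for weak convergence, e.g.\ the bounded-Lipschitz metric), deterministically. Hence the two sequences are exponentially equivalent, and \cite[Theorem 4.2.13]{dembo1998large-deviations} gives the same LDP for $\mu_{\sigma_n}$. The only point needing a little care is the identity $\mu_\sigma^\diagup=\Psi(\pi_1^\sigma,\pi_2^\sigma)$, and in particular the treatment of fixed points: these are not strict RL minima, so they belong to the upper part. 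Since the upper part is increasing and consists of points weakly above the diagonal, the nondecreasing coupling still matches cells correctly. I expect no real obstacle beyond this bookkeeping.
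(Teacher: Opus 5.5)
Your proposal is correct and matches the paper's argument: the contraction principle through the continuous surjection $\Psi$, using $\mu_\sigma^\diagup=\Psi(\pi_1^\sigma,\pi_2^\sigma)$; the fibre description \eqref{eq:preimages} to identify $\overline H$; and exponential equivalence to pass from $\mu^\diagup_{\sigma_n}$ to $\mu_{\sigma_n}$. Your cell-by-cell check of $\mu_\sigma^\diagup=\Psi(\pi_1^\sigma,\pi_2^\sigma)$, including the placement of fixed points in the upper part, spells out what the paper only asserts as holding ``by construction''.
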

The last statement holds because the laws of $\mu_{\sigma_n}$ and $\mu_{\sigma_n}^\diagup$
are exponentially equivalent to each other in the sense of~\cite[Definition 4.2.10]{dembo1998large-deviations}.

\section{An optimization problem for pairs of measures}

Our goal is now to minimize the action $A_{\beta}$ appearing in Corollary~\ref{cor:LDP_Mallows_321}.
\begin{prop}\label{prop:max_action}
	For any $\beta\in \R$, the action $A_{\beta}$ has a unique minimizer $(\pi_1^{(\beta)}, \pi_2^{(\beta)})$ in $\mathcal{D}$. For $\beta \leq 0$, we have $\pi_1^{(\beta)}=\pi_2^{(\beta)}=\frac{1}{2}\Leb_{[0,1]}$. 
    For $\beta>0$, the densities of $\pi_1^{(\beta)}$ and $\pi_2^{(\beta)}$ on $[0,1]$ are given by
	\begin{equation}\label{eqn:formula_maximum_action}
	    \rho_1^{(\beta)}(x) := \frac{1}{1+\exp\left(\beta(\frac12-x) \right)}, 
        \quad \rho_2^{(\beta)}(x) := \frac{1}{1+\exp\left(\beta(x-\frac12) \right)}.
	\end{equation}
	In particular, both $\pi_1^{(\beta)}$ and $\pi_2^{(\beta)}$ have total mass $\frac{1}{2}$.
\end{prop}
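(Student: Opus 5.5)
The plan is to exploit convexity. The functional $A_\beta$ is a strictly convex entropy term plus a linear term, and $\mathcal D$ is convex and compact, so the main work is to guess the minimizer and certify it with a tangent-line (first-order) inequality. The one place where the constraint defining $\mathcal D$ matters turns out to be the case $\beta\le 0$.

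\emph{Existence and uniqueness.} I would first rewrite the linear part using cumulative distribution functions. Set $F_i(x)=\pi_i([0,x])$. Integrating by parts and using $F_1(1)=F_2(1)$ gives
\[
\int_0^1 x(\rho_1-\rho_2)\,\mathrm dx=\int_0^1 (F_2(x)-F_1(x))\,\mathrm dx .
\]
This is continuous on $\mathcal D$ for the Kolmogorov distance. Since $H$ is a rate function, it is lower semicontinuous, so $A_\beta$ is lower semicontinuous on the compact set $\mathcal D$ and attains its infimum. The infimum is finite, because $A_\beta(\tfrac12\Leb,\tfrac12\Leb)=0$. Uniqueness follows because $t\mapsto t\log t+(1-t)\log(1-t)$ is strictly convex on $[0,1]$. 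Hence $A_\beta$ is strictly convex along segments of $\mathcal D$ whose two endpoints have different densities on a set of positive measure, and two distinct minimizers would give a strictly smaller value at their midpoint.

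\emph{Case $\beta\le0$.} On $\mathcal D$ we have $F_2-F_1\ge0$, so the term $-\beta\int(F_2-F_1)$ is nonnegative. Moreover $H\ge0$, and $H=0$ exactly when $\rho_1=\rho_2=\tfrac12$ almost everywhere. Therefore $A_\beta\ge0=A_\beta(\tfrac12\Leb,\tfrac12\Leb)$, and $(\tfrac12\Leb,\tfrac12\Leb)$ is the minimizer.

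\emph{Case $\beta>0$.} I would first find the candidate through a heuristic Lagrangian. Drop the ordering constraint, keep only the equal-mass constraint with multiplier $\lambda$, and minimize pointwise. This gives
\[
\rho_1=\frac{1}{1+e^{\lambda-\beta x}},\qquad \rho_2=\frac{1}{1+e^{\beta x-\lambda}} .
\]
Equal masses force $\lambda=\beta/2$, using the symmetry $x\mapsto1-x$ and the monotonicity in $\lambda$. This yields exactly \eqref{eqn:formula_maximum_action}. I would then check that the candidate lies in $\mathcal D$:
\begin{itemize}
\item Both densities take values in $(0,1)$, so both measures are subuniform.
\item We have $\rho_2^{(\beta)}=1-\rho_1^{(\beta)}$ and $\rho_1^{(\beta)}(x)=\rho_2^{(\beta)}(1-x)$. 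The second identity gives equal masses, and the first then gives total mass $\tfrac12$ for each.
\item Since $\rho_1^{(\beta)}\le\rho_2^{(\beta)}$ on $[0,\tfrac12]$ and the reverse holds on $[\tfrac12,1]$, the function $F_2-F_1$ increases and then decreases back to $0$ at $x=1$, so it stays nonnegative.
\end{itemize}

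To certify optimality rigorously, I would avoid any Lagrangian theory and use the tangent inequality for the convex function $g(t)=t\log t+(1-t)\log(1-t)$, which is valid for $t\in[0,1]$ whenever $t^*\in(0,1)$:
\[
g(t)\ge g(t^*)+g'(t^*)(t-t^*).
\]
Here $g'(\rho_1^{(\beta)}(x))=\beta(x-\tfrac12)$ and $g'(\rho_2^{(\beta)}(x))=\beta(\tfrac12-x)$. Summing the two inequalities and integrating, for any $(\pi_1,\pi_2)\in\mathcal D$ the right-hand side contains $\beta\int x(\rho_1-\rho_2)-\beta\int x(\rho_1^{(\beta)}-\rho_2^{(\beta)})$, which cancels the corresponding linear terms in $A_\beta$. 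The remaining terms are constants times $\int(\rho_1-\rho_2)$ and $\int(\rho_1^{(\beta)}-\rho_2^{(\beta)})$, and both vanish by the equal-mass property. This yields $A_\beta(\pi_1,\pi_2)\ge A_\beta(\pi_1^{(\beta)},\pi_2^{(\beta)})$.

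The main obstacle I anticipate is the ordering constraint in $\mathcal D$. For $\beta>0$ it is not active, because the relaxed minimizer already satisfies it. For $\beta\le0$ the relaxed minimizer violates it, so the separate sign argument above is what handles that case.
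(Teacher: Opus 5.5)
Your proof is correct and follows the same route as the paper. Existence and uniqueness come from compactness, lower semicontinuity and strict convexity; the case $\beta\le 0$ uses the sign argument with $F_1\le F_2$; and for $\beta>0$ you give a first-order certification at the explicit candidate, using $\log\frac{\rho_i^{(\beta)}}{1-\rho_i^{(\beta)}}=\pm\beta(x-\tfrac12)$ together with the equal-mass constraint. Your pointwise tangent-line inequality is simply a concrete, integrated form of the paper's appeal to Proposition~\ref{prop:max_concave} (vanishing directional derivatives), and your explicit check that the candidate satisfies $F_1\le F_2$ supplies a membership verification in $\mathcal D$ that the paper leaves implicit.
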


\begin{proof}[Proof of Proposition~\ref{prop:max_action}]
	We will rely on differential calculus on the space $\mathcal{D}$ (see Proposition~\ref{prop:max_concave} in Appendix~\ref{sec:appendix} for a precise statement).
	We first note that the domain $\mathcal{D}$ is convex and compact for the topology of weak convergence of measures. 
	Moreover, the action $A_{\beta}$ is a lower semicontinuous function of $(\pi_1, \pi_2)$. 
	Finally, the entropy $H$ is a strictly convex function of $(\pi_1, \pi_2)$ and the additional term in the action is affine, so $A_{\beta}$ is strictly convex, which shows the existence and uniqueness of the minimizer.
	For $\beta \leq 0$, the term $H$ has a unique minimum at $\pi_1=\pi_2=\frac{1}{2}\Leb_{[0,1]}$, and the second term of~\eqref{eqn:defn_action_321} is always nonnegative and is $0$ for $\pi_1=\pi_2$, which identifies the minimum.
	
	We now assume $\beta>0$. By Proposition~\ref{prop:max_concave}, to ensure that the minimizer of $A_{\beta}$ is given by the $(\pi_1^{(\beta)}, \pi_2^{(\beta)})$ of~\eqref{eqn:formula_maximum_action}, it is now sufficient to show that the differential of $A_{\beta}$ vanishes at $(\pi_1^{(\beta)}, \pi_2^{(\beta)})$.
	
	To this end, we define the set of directions
	\[V_{\left(\pi_1^{(\beta)}, \pi_2^{(\beta)}\right)}(\mathcal{D}):=\big\{(\mu_1, \mu_2):(\pi_1^{(\beta)}+t\mu_1,\pi_2^{(\beta)}+t\mu_2) \in \mathcal D \text{ for $t>0$ small enough}\big\}\]
	in which the derivative could make sense.
	We first check that any $(\mu_1, \mu_2) \in V_{\left(\pi_1^{(\beta)}, \pi_2^{(\beta)}\right)}(\mathcal{D})$ is a pair of signed measures with
    $\mu_1([0,1])=\mu_2([0,1])$ and such that there is a constant $C$ such that $|\mu_1|,|\mu_2| \leq C \Leb$.	
	Indeed, one must have $\mu_1([0,1])=\mu_2([0,1])$ to ensure that $\pi_1^{(\beta)}([0,1])+t\mu_1([0,1])=\pi_2^{(\beta)}([0,1])+t\mu_2([0,1])$ for $t$ small enough. 
	Moreover, for $i \in \{1,2\}$, the condition $|\mu_i| \leq C \Leb$ is necessary to ensure that $0\le \pi_i^{(\beta)}+t\mu_i \le \Leb$ for $t$ small enough.

    We now differentiate the action $A_{\beta}$. For any direction $(\mu_1, \mu_2) \in V_{(\pi_1^{(\beta)}, \pi_2^{(\beta)})}(\mathcal{D})$, writing $\alpha_1, \alpha_2$ for the densities of $\mu_1, \mu_2$, we have
    \[ \frac{\mathrm{d}}{\mathrm{d}t}\Big|_{t=0^+} H \left( \pi_1^{(\beta)}+t\mu_1, \pi_2^{(\beta)}+t\mu_2 \right)= \int_0^1 \alpha_1 \log \frac{\rho_1^{(\beta)}}{1-\rho_1^{(\beta)}} + \alpha_2 \log \frac{\rho_2^{(\beta)}}{1-\rho_2^{(\beta)}}. \]
    Note that differentiating inside the integral sign is indeed legit since the densities $\rho_1^{(\beta)}$ and $\rho^{(\beta)}_2$ are bounded away from $0$ and $1$, and hence everything is uniformly bounded for $t$ small enough.
    On the other hand, the second term of the action $A_{\beta}$ is a linear function so we can easily differentiate it. 
    We obtain:
    \begin{multline}\label{eqn:derivative_action}
        \frac{\mathrm{d}}{\mathrm{d}t}\Big|_{t=0^+} A_{\beta} \left( \pi_1^{(\beta)}+t\mu_1, \pi_2^{(\beta)}+t\mu_2 \right) \\= \int_0^1 \alpha_1 \log \frac{\rho_1^{(\beta)}}{1-\rho_1^{(\beta)}} + \alpha_2 \log \frac{\rho_2^{(\beta)}}{1-\rho_2^{(\beta)}} -\beta \int_0^1 x \left( \alpha_1(x)-\alpha_2(x) \right) \mathrm{d}x.
    \end{multline}
    Replacing $\rho_1^{(\beta)}$ and $\rho_2^{(\beta)}$ by their explicit expressions, this becomes
    \begin{multline*}
        \int_0^1 \left( \alpha_1(x) \beta \left( x-\frac{1}{2} \right) + \alpha_2(x) \beta \left( \frac{1}{2}-x \right) - \beta x \alpha_1(x) + \beta x \alpha_2(x) \right) \mathrm{d}x \\
        = \frac{\beta}{2} \int_0^1 \left( \alpha_2(x) - \alpha_1(x) \right) \mathrm{d}x
        = 0 ,
    \end{multline*}
    since $\mu_1([0,1])=\mu_2([0,1])$.
    
    Finally, the fact that $\pi_1^{(\beta)}$ and $\pi_2^{(\beta)}$ both have mass $\frac{1}{2}$ follows from $\rho_1^{(\beta)}+\rho_2^{(\beta)}=1$.
\end{proof}

\begin{rem}\label{rem:equadiff}
The above proof may look like a ``guess-and-check'', but we could also \emph{find} the formulas~\eqref{eqn:formula_maximum_action} from the condition in Proposition~\ref{prop:max_concave}. Indeed, we look for densities $\rho_1$ and $\rho_2$ such that the right-hand side of~\eqref{eqn:derivative_action} vanishes for any $\alpha_1, \alpha_2$ with $\int \alpha_1=\int \alpha_2$.
In particular, the case $\alpha_2=0$ implies that one should have $\int \alpha_1 \left( \log \frac{\rho_1}{1-\rho_1}-\beta x \right)=0$ whenever $\int \alpha_1=0$. Therefore $\log \frac{\rho_1}{1-\rho_1}-\beta x$ must be equal a.e.~to a constant $c_1$, which means that $\rho_1(x)=\frac{1}{1+\ee^{-\beta x-c_1}}$ a.e., and similarly for $\rho_2$ with another constant $c_2$. Then taking $\alpha_1(x)=\alpha_2(x)=1$ for all $x$ in $[0,1]$ in~\eqref{eqn:derivative_action} shows $c_1=c_2$. Finally, for the minimizer to live in $\mathcal D$, we need to have $\int_0^1 \rho_1=\int_0^1 \rho_2$, which happens only for $c_1=c_2=-\beta/2$.
\end{rem}


Before going to the proof of Theorem~\ref{thm:phase_transition_321}, 
let us use the minimisation result that we have just obtained to study the partition function of the model.

\begin{proof}[Proof of the estimate~\eqref{eq:partition321}]
	By the first estimate in Lemma~\ref{lem:Varadhan}, Corollary~\ref{cor:LDP_Mallows_321} and Proposition~\ref{prop:max_action}, we have that
\[ \lim_{n \rightarrow \infty} \frac{1}{n} \log \frac{Z_n^\beta}{Z_n^0} =- \inf A_\beta =  -A_\beta(\pi_1^{(\beta)}, \pi_2^{(\beta)}).\]
	Recalling that $\lim_{n \rightarrow \infty} \frac{1}{n} \log (Z_n^0)=2 \log(2)$ (since $Z_n^0$ is the $n$-th Catalan number $C_n$),
	it suffices to evaluate $A_\beta(\pi_1^{(\beta)}, \pi_2^{(\beta)})$.
The latter can be (almost) explicitly computed: using the change of variable
$y = \ee^{\beta\left(x-\frac{1}{2}\right)}$, we have $\dd y=\beta y \dd x$, and $\rho^{(\beta)}_1=y/(1+y)$, so that
	\[ \int_0^1 \rho^{(\beta)}_1(x) \log\left( \frac{\rho^{(\beta)}_1(x)}{1-\rho^{(\beta)}_1(x)} \right) \mathrm{d}x =\beta^{-1} \int_{\ee^{-\beta/2}}^{\ee^{\beta/2}}  \frac{\log y }{1+y} \dd y,\]
	while
	\[ \int_0^1 \log\left( {1-\rho^{(\beta)}_1(x)} \right) \mathrm{d}x = - \beta^{-1} \int_{\ee^{-\beta/2}}^{\ee^{\beta/2}} \log(1+y) \frac{\dd y}{y}.\]
	By symmetry the integral with $\rho^{(\beta)}_2$ have the same values and we get
	\begin{equation}\label{eq:Hopt} H(\pi_1^{(\beta)}, \pi_2^{(\beta)}) = \frac{2}{\beta} \int_{\ee^{-\beta/2}}^{\ee^{\beta/2}} \left( \frac{\log y }{1+y} -  \frac{\log (1+y) }{y}\right) \dd y + 2 \log 2.
	\end{equation}
	We now compute, using that $\rho_1^{(\beta)}$ and $\rho_2^{(\beta)}$ both have integral $1/2$,
	\begin{align*}
    \beta \int_0^1 x\left(\rho_1^{(\beta)}(x) - \rho_2^{(\beta)}(x)\right) \dd x 
    &= \beta \int_0^1 \left( x-\frac{1}{2} \right) \left(\rho_1^{(\beta)}(x) - \rho_2^{(\beta)}(x)\right) \dd x 
	\\&= \int_{\ee^{-\beta/2}}^{\ee^{\beta/2}}
	\big( \tfrac1{\beta}\log(y) \big) \frac{y-1}{y+1} \frac{\dd y}{y} \,.
    \end{align*}
	Now, rewriting $(y-1)/(y+1)$ as $1-2/(y+1)$ and observing that
	\[\int_{\ee^{-\beta/2}}^{\ee^{\beta/2}} \frac{\log(y)}{y} \dd y =\tfrac12 (\log(\ee^{\beta/2})^2 - \log(\ee^{-\beta/2})^2) =0,\]
	we get
	\begin{multline*}
	\beta \int_0^1 x\left(\rho_1^{(\beta)}(x) - \rho_2^{(\beta)}(x)\right) \dd x =
	-2 \beta^{-1} \int_{\ee^{-\beta/2}}^{\ee^{\beta/2}} \frac{\log(y)}{y(y+1)} \dd y\\
	=
	2 \beta^{-1} \int_{\ee^{-\beta/2}}^{\ee^{\beta/2}} \log(y) (\tfrac1{y+1}-\tfrac1y) \dd y=
	2 \beta^{-1} \int_{\ee^{-\beta/2}}^{\ee^{\beta/2}} \frac{\log(y)}{y+1} \dd y.
	\end{multline*}
	Removing this quantity
	from \eqref{eq:Hopt} leads to the asymptotic formula anounced in Equation~\eqref{eq:partition321}.
\end{proof}

We can now conclude the proof of Theorem~\ref{thm:phase_transition_321}. 


\begin{proof}[Proof of Theorem~\ref{thm:phase_transition_321}]
    We recall from Corollary~\ref{cor:LDP_Mallows_321} that $(\pi_1^{[n,\beta]}, \pi_2^{[n,\beta]})$ satisfies an LDP at speed $n$ with rate function $A_{\beta}(\pi_1,\pi_2)- \inf(A_{\beta})$.
    In particular, since $A_{\beta}$ has a unique minimizer $(\pi_1^{(\beta)}, \pi_2^{(\beta)})$ by Proposition~\ref{prop:max_action}, the sequence $(\pi_1^{[n,\beta]}, \pi_2^{[n,\beta]})$ converges a.s.~to $(\pi_1^{(\beta)}, \pi_2^{(\beta)})$.
    Since the mapping $\Psi : \mathcal D \to \PAvTDU$ is continuous by Proposition~\ref{prop:321-avoiding-permutons}, it follows that the sequence $\tau_n^{\beta} = \Psi\left(\pi_1^{[n,\beta]}, \pi_2^{[n,\beta]} \right)$ converges a.s.~to the permuton $\Psi\left( \pi_1^{(\beta)}, \pi_2^{(\beta)} \right)$.
    
    To conclude the proof, we only need to check that $\Psi\left( \pi_1^{(\beta)}, \pi_2^{(\beta)} \right)$ is indeed the permuton described by Theorem~\ref{thm:phase_transition_321}. We will drop the dependency in $\beta$ for ease of notation. We denote by $G_1 = G_{2\pi_1}$ and $G_2=G_{2\pi_2} : \left[0,1 \right] \to [0,1]$ the respective quantile functions of the probability measures $2\pi_1$ and $2\pi_2$ as defined in~\eqref{eqn:defn_quantile_function}. Note that $\pi_1, \pi_2$ have positive densities, so $G_1$ and $G_2$ are both increasing bijections. We can write
    \begin{align}
    	\pi_1 \nearrow \pi_2 &= \frac{1}{2} \left( G_1, G_2 \right)_{\#}  \Leb_{[0,1]} \nonumber \\
    	&= \frac{1}{2} \left( X, G_2 \circ G_1^{-1}(X) \right)_{\#} \left( (G_1)_{\#} \Leb_{[0,1]}\right) \nonumber \\
    	&=  \left( X, G_2 \circ G_1^{-1}(X) \right)_{\#} \pi_1. \label{eqn:identification_permuton}
    \end{align}
    We now write $f=G_2 \circ G_1^{-1}$ and check that this coincides with the function $f_{\beta}$ appearing in Theorem~\ref{thm:phase_transition_321}. By definition of quantile functions, for any $u \in [0,1]$, we have
    \[ 2\pi_1 \left( \left[ 0,G_1(u)\right] \right) = u = 2\pi_2 \left( \left[ 0,G_2(u)\right] \right), \]
    so $\pi_1([0,x]) = \pi_2([0,f(x)])$. Since the density $\rho_2$ is continuous and positive, this identity implies that $f$ is $\mathcal C^1$ as well.
    Differentiating yields $\rho_1(x) = f'(x) \rho_2(f(x))$, and therefore:
    \begin{equation*}
    f'(x) = \frac{\rho_1(x)}{\rho_2(f(x))} = \frac{1 + \ee^{\beta(f(x)-1/2)}}{1 + \ee^{\beta(1/2-x)}} \,.
    \end{equation*}
    Substituting $g := \ee^{\beta(f-1/2)}$, we find that
    \begin{equation*}
    g' = q g (1+g)
    \end{equation*}
    where $q(x) := \frac{\beta}{1 + \ee^{\beta(1/2-x)}}$.
    This is a simple Riccati equation, which can be solved\footnote{To solve $g'=qg(1+g)$, consider $u := a\ee^{\beta x}+b$ for abitrary $a\ne0$ and $b\in\R$.
    The latter solves $u'' = R u'$ where $R := q + \frac{q'}{q} = \beta$.
    It can then be checked that $g := \frac{-u'}{qu}$ solves the desired equation, and finally, $c$ is defined as $b/a$.} by
    \begin{equation*}
    g(x) = - \frac{\ee^{\beta x} + \ee^{\beta/2}}{\ee^{\beta x} + c}
    \end{equation*}
    for $c\in\R$ (the only other solution is $g=0$).
    Using $g(0) = \ee^{-\beta/2}$ yields $c = -(1+\ee^{\beta/2}+\ee^\beta)$, and the identification of $f$ follows. 
    Combined with~\eqref{eqn:identification_permuton} and the same computation for $(\Leb_{[0,1]}-\pi_1) \nearrow (\Leb_{[0,1]}-\pi_2)$, this shows that $\Psi(\pi_1,\pi_2)$ is indeed as described by Theorem~\ref{thm:phase_transition_321}.
\end{proof}

\part{Avoiding 231}
\label{part:231}

We now consider permutations and permutons avoiding $231$. 
Again, in this part we will omit the superscript $231$ in all the notations.

\section{231-avoiding permutations and permutons}
\label{sec:structure_231}

\subsection{A classical bijection for 231-avoiding permutations}
\label{ssec:bij231}

If $\sigma$ avoids $231$, let 
\[
    F_\sigma : x\in[0,n) \mapsto \min\left\{ \sigma(c) : c>x \right\} - 1 \,.
\]
This defines a càdlàg, piecewise constant, nondecreasing function\footnote{For readers familiar with the notion of right-to-left minima of a permutation, $F_\sigma$ is the largest integer-valued function whose graph stays below the right-to-left minima of $\sigma$.
In particular, it is completely determined by, and completely determines, the positions and values of the right-to-left minima of $\sigma$.}.
We can let $F_\sigma(n) := n$, so that $F_\sigma$ starts at $0$ and ends at $n$.
Also define $\Phi_\sigma$ as the Dyck path of size $2n$ obtained by rotating $F_\sigma$ by $-\pi/4$, flipping it, and scaling it up by $\sqrt 2$.
See Figure~\ref{fig: example_231} for an illustration.
The following lemma can e.g.~be found in \cite[Proof of Lemma 4.3]{BonaLivre}:
\begin{lem}
The application $\sigma \mapsto \Phi_\sigma$ is one-to-one, from $231$-avoiding permutations to Dyck paths of length $2n$.
\end{lem}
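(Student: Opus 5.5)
The plan is to show that $\sigma\mapsto\Phi_\sigma$ is well defined with values in Dyck paths, that it is injective, and then to conclude by cardinality (both sets are counted by the Catalan number $C_n$); alternatively, I would construct the inverse directly.

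\emph{Well-definedness.} By construction $F_\sigma$ is nondecreasing and integer-valued, with $F_\sigma(0)=\min_{c>0}\sigma(c)-1=0$ and $F_\sigma(n)=n$. Its jumps occur exactly at the positions of the right-to-left minima. Moreover $F_\sigma(x)\le x$ for all $x$, since the $n-\lfloor x\rfloor$ values $\sigma(c)$ with $c>x$ are distinct, which forces their minimum to be at most $\lfloor x\rfloor+1$. Thus the graph of $F_\sigma$ is a lattice staircase from $(0,0)$ to $(n,n)$ that stays weakly below the diagonal. After rotation and scaling, it becomes a path of $2n$ steps $\pm1$ that stays nonnegative, i.e.\ a Dyck path. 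I note that this part does not use $231$-avoidance.

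\emph{Injectivity.} $\Phi_\sigma$ determines $F_\sigma$, hence the set of right-to-left minima (their positions and values). I would prove that a $231$-avoiding $\sigma$ is determined by its RL minima, by reading $\sigma$ from right to left. Suppose the RL minima are known and let $c$ be a position that is not an RL minimum. Then $\sigma(c)>m_c:=\min_{d>c}\sigma(d)$, and $231$-avoidance forces every value between $m_c$ and $\sigma(c)$ to sit to the right of $c$. Indeed, if a value $v$ with $m_c<v<\sigma(c)$ sat at a position $e<c$, then $e<c<d$, where $d$ is the position of $m_c$, would give an occurrence of $231$.

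Taking this further, I would show that $\sigma(c)$ must be the smallest value larger than $m_c$ that is not already used at positions $>c$. So, filling the positions from right to left, each non-RL-minimum value is forced. This determines $\sigma$ uniquely, which gives injectivity.

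\emph{Surjectivity.} I would count: the number of $231$-avoiding permutations of size $n$ is classically $C_n$, for instance via the decomposition around the maximum recalled in the Remark of the introduction, which yields the Catalan recurrence $c_{n+1}=\sum_i c_ic_{n-i}$. Dyck paths of length $2n$ are also counted by $C_n$. An injection between finite sets of equal size is a bijection.

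The main obstacle I expect is the greedy-reconstruction claim in the injectivity step. One must check carefully that the choice it forces is consistent, i.e.\ that the smallest unused value above $m_c$ is exactly $\sigma(c)$ and not merely some admissible value. I would argue this by contradiction using a $231$ pattern formed by $c$, the position of the skipped value, and the position of $m_c$.
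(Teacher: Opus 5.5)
Your proof is correct and complete. Well-definedness holds for every permutation. For injectivity, $231$-avoidance forces all values strictly between $m_c$ and $\sigma(c)$ to lie to the right of $c$, and this already says that $\sigma(c)$ is the smallest value above $m_c$ not used to the right of $c$. So the ``main obstacle'' you flag is settled by the sentence just before it; note also that $m_c=F_\sigma(c)+1$ can be read directly off the path. Equal cardinality $C_n$ then upgrades injectivity to bijectivity.

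The paper gives no proof of its own and only points to B\'ona's book. The classical argument there is of Simion--Schmidt type, stated for $132$-avoiders and left-to-right minima, which is the reversal of your setting. It uses the same forced greedy reconstruction from the minima. However, it typically obtains surjectivity constructively, by checking that greedily completing any admissible set of minima yields an avoider with exactly those minima.

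Your counting shortcut spares you that verification. The price is importing the Catalan enumeration of $231$-avoiders, which comes from the decomposition around the maximum mentioned in the paper's introductory remark. The constructive route is self-contained and also gives the inverse map from Dyck paths to $231$-avoiding permutations explicitly.
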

In particular, if $\sigma_n$ is a uniformly random $231$-avoiding permutation with size $n$ then $\Phi_{\sigma_n}$ is a uniformly random Dyck path of length $2n$.
Finally, we define the normalized RLM curve of $\sigma$ as $f_\sigma := \frac1n F_\sigma(n\,\cdot)$ 
and its normalized excursion as $\varphi_\sigma := \frac{1}{2n} \Phi_\sigma(2n\,\cdot)$ on $[0,1]$.

\begin{figure}
    \centering
    \includegraphics{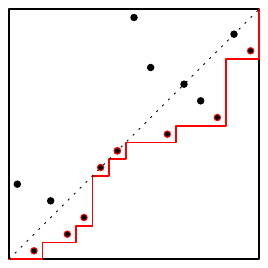}
    \qquad
    \includegraphics{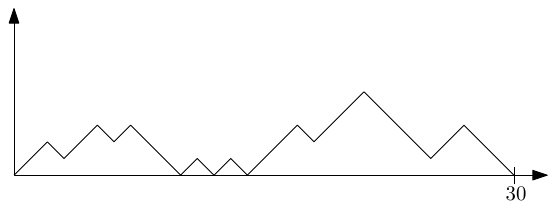}
    \caption{Left: a $231$-avoiding permutation $\sigma$ of size $15$. 
    Its set of RL minima, and the associated curve $F_\sigma$, are shown in red.
    Right: the Dyck path $\Phi_\sigma$ of length $30$ obtained by rotating and flipping the RLM curve.
    }
    \label{fig: example_231}
\end{figure}

\begin{prop}\label{prop: inv of 231 avoiding permutation}
    If $\sigma$ is a $231$-avoiding permutation then
    \[\inv(\sigma) = \binom{n}{2} - \sum_{x=0}^{n-1} F_\sigma(x).\]
    Therefore,
    \[ \inv(\sigma)
    = \binom{n}{2} - n^2 \int_0^1 f_\sigma(x) \mathrm{d}x 
    = \frac{-n}{2} + 2 n^2 \int_0^1 \varphi_\sigma(x) \mathrm{d}x  .\]
\end{prop}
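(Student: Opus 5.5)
The plan is to prove the first identity by counting non-inversions, and then to obtain the two integral expressions by elementary geometry.

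\textbf{The combinatorial identity.} For $1\le i\le n$ set $m_i:=\min\{\sigma(c): c\ge i\}$. With the definition of $F_\sigma$, we have $F_\sigma(i-1)=m_i-1$, so $\sum_{x=0}^{n-1}F_\sigma(x)=\sum_{i=1}^n (m_i-1)$. Since $\binom n2-\inv(\sigma)$ is the number of non-inversions, it suffices to show, for each $i$,
\[
 m_i-1=\#\{j<i:\sigma(j)<\sigma(i)\}.
\]

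First, $m_i-1$ counts the values smaller than $m_i$. By minimality of $m_i$ over the suffix $\{i,\dots,n\}$, all these values sit at positions $j<i$. Hence $m_i-1=\#\{j<i:\sigma(j)<m_i\}$.

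Next, I compare the two sets $\{j<i:\sigma(j)<m_i\}$ and $\{j<i:\sigma(j)<\sigma(i)\}$.
\begin{itemize}
\item The first set is contained in the second, because $m_i\le\sigma(i)$.
\item For the reverse inclusion, suppose $j<i$ and $\sigma(j)<\sigma(i)$ but $\sigma(j)>m_i$. Write $m_i=\sigma(k)$. Then $k\neq i$, so $k>i$, and $\sigma(k)<\sigma(j)<\sigma(i)$. The indices $j<i<k$ then form an occurrence of $231$, which is impossible.
\end{itemize}
So the two sets coincide. Summing over $i$ gives $\sum_x F_\sigma(x)=\binom n2-\inv(\sigma)$.

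\textbf{The first integral form.} The function $f_\sigma=\frac1nF_\sigma(n\,\cdot)$ is constant on each interval $[k/n,(k+1)/n)$, with value $F_\sigma(k)/n$ there. Therefore
\[
 \int_0^1 f_\sigma=\frac1{n^2}\sum_{k=0}^{n-1}F_\sigma(k),
\]
which gives the middle expression immediately.

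\textbf{The excursion form.} This is the only step needing care; the risk is the normalisation conventions. The map from the graph of $F_\sigma$ to $\Phi_\sigma$ is a rotation by $-\pi/4$, a flip, and a scaling by $\sqrt2$. The diagonal $y=x$ is sent to the horizontal axis.
\begin{itemize}
\item Rotation and flip preserve areas, and scaling by $\sqrt2$ multiplies them by $2$.
\item The region between the staircase $F_\sigma$ (with vertical segments filled in) and the diagonal on $[0,n]$ has area $\int_0^n (x-F_\sigma(x))\,\dd x=\frac{n^2}{2}-\sum_k F_\sigma(k)$.
\item Hence $\int_0^{2n}\Phi_\sigma=n^2-2\sum_kF_\sigma(k)$.
\item Rescaling via $\varphi_\sigma=\frac1{2n}\Phi_\sigma(2n\,\cdot)$ gives $\int_0^1\varphi_\sigma=\frac{1}{4n^2}\int_0^{2n}\Phi_\sigma$.
\end{itemize}
It follows that $2n^2\int_0^1\varphi_\sigma=\frac{n^2}{2}-\sum_kF_\sigma(k)$. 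Subtracting $\frac n2$ yields $\binom n2-\sum_kF_\sigma(k)=\inv(\sigma)$, as claimed.

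I will check this bookkeeping on the small example of Figure~\ref{fig: example_231}, to make sure the area convention (staircase versus diagonal, with vertical segments included) matches the definition of $\Phi_\sigma$.
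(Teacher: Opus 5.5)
Your proof is correct and follows the paper's route. You count non-inversions by their right endpoint, and in the reverse inclusion the suffix minimum $m_i$ (the paper's ``first RL minimum to the right of $c$'') supplies the forbidden $231$ occurrence; your explicit area computation then spells out the identity $\frac12=\int_0^1 f_\sigma+2\int_0^1\varphi_\sigma$, which the paper uses without further justification.
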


\begin{proof}
 Let $\noninv(\sigma) := \card{ (x,y)\in[n]^2 : x<y , \sigma(x)<\sigma(y) }$ be the number of noninversions of $\sigma$.
    Since $\inv(\sigma) + \noninv(\sigma) = \binom{n}{2}$, we may as well compute $\noninv(\sigma)$.
    For this, we enumerate noninversions by their north-east point.
    Let $c\in[n]$ be fixed.
    We claim that 
    \begin{equation}\label{eq: noninversion set of 231 avoiding}
        \left\{ x\in[n] : x<c , \sigma(x)<\sigma(c) \right\}
        = \left\{ \sigma^{-1}(y) : y \le F_\sigma(c-1) \right\} \,,
    \end{equation}
    and thus
    \begin{equation*}\label{eq: noninversion count of 231 avoiding}
        \card{ x\in[n] : x<c , \sigma(x)<\sigma(c) }
        = F_\sigma(c-1) \,,
    \end{equation*}
    so that $\noninv(\sigma) =  \sum_{x=0}^{n-1} F_\sigma(x).$

Indeed, for $c\in [n],$ by definition of $F_\sigma,$ for any $y \le    F_\sigma(c-1)$, we have $\sigma^{-1}(y)<c $ since there is no point under the RLM curve.

Moreover, by construction $\sigma(c) \ge  F_\sigma(c-1) +1.$ If $ \sigma(c) =  F_\sigma(c-1) +1,$ then the previous remark directly gives \eqref{eq: noninversion set of 231 avoiding}. Now, if  $ \sigma(c) > F_\sigma(c-1) +1,$ then there exists $c'>c$ such that  $ \sigma(c') = F_\sigma(c-1) +1$ (intuitively, $c'$ is the first RL minimum at the right of $c$). Let $ F_\sigma(c-1)< y< \sigma(c).$
If $\sigma^{-1}(y) < c,$ then the permutation $\sigma$ is not $231$-avoiding because we have $\sigma^{-1}(y) < c < c'$ and
$\sigma(c')< y < \sigma(c).$ Therefore, \eqref{eq: noninversion set of 231 avoiding} holds in both cases and this concludes the proof of the first formula.

    Finally, since the non-normalized RLM curve $F_\sigma$ is locally constant and jumps at integer positions,  
    $\sum_{x=0}^{n-1} F_\sigma(x)=\int_0^n F_\sigma(x) \mathrm{d}x$.
    Using $\int_0^1 f_\sigma(x) \mathrm{d}x = \frac{1}{n^2} \int_0^n F_\sigma(x) \mathrm{d}x$ and $\frac12 = \int_0^1 f_\sigma(x) \mathrm{d}x + 2\int_0^1 \varphi_\sigma(x) \mathrm{d}x$, the rest of the Proposition follows.
\end{proof}

\subsection{231-avoiding permutons}

We write $\PAvDTU$ for the set of $231$-avoiding permutons.
Given $\mu \in \PAvDTU$, we define its RLM curve as
\[
    f_\mu :x\in[0,1] \mapsto \max \left\{ y\in[0,1] : \mu\left([x,1]\times[0,y]\right) = 0 \right\} \,.
\]
The function $f_\mu$ is càdlàg, nondecreasing and satisfies $f_\mu(1)=1$ and $f_\mu(x) \le x$
for $x$ in $[0,1]$ (using the fact that permutons have uniform marginals).


Conversely, let $f$ be a nondecreasing càdlàg function from $[0,1]$ to $[0,1]$
satisfying $f(x) \le x$ and $f(1)=1$.
We let $\mathcal{F}$ denote the space of such functions.
We want to construct a $231$-avoiding permuton $\mu_f$ whose RLM-curve is $f$.
The standard way to proceed for $231$-avoiding permutations is a recursive construction from right to left, which is hard to transpose to continuous objects.
Instead, we construct $\mu_f$ by specifying the area of lower-right rectangles $[x,1] \times [0,y]$.

For this, we let $H_f := \{(x,y)\in[0,1]^2 : y \ge f(x)\}$ be the region above the graph of $f$, $G_f=\{(x,y) : f(x^-) \le y \le f(x)\}$ be the completed graph of $f$, and for $(x,y)$ in $[0,1]^2$ we let $G_f(x,y)=G_f \cap ([x,1] \times [0,y])$ be the part of the graph which lies below and to the right of $(x,y)$.
Finally, we define $P_f:[0,1]^2 \to [0,1]$ as follows:
\begin{equation}\label{eq: cumulative formula for 231-avoiding permuton}
    P_f(x,y)=
    \begin{cases} 
        y-x + \min_{(x',y') \in G_f(x,y)} (x'-y') &\text{ if } (x,y)\in H_f ;\\
        0 & \text{ if }y <f(x).
    \end{cases}
\end{equation}
Note that $P_f(x,y) = \min_{(x',y') \in G_f} ||(x,y) - (x',y')||_1$  if $(x,y)\in H_f$.
In particular, $P_f(x,y) = 0$ if $(x,y)\in G_f$, and $P_f$ is continuous.
See Figure~\ref{fig: cumulative formula for 231-avoiding permuton} for an illustration of \eqref{eq: cumulative formula for 231-avoiding permuton}.

\begin{figure}
    \centering
    \includegraphics{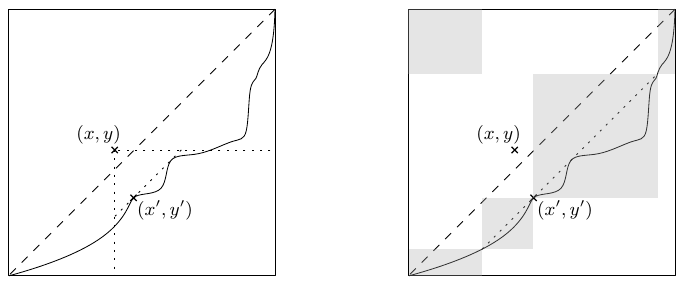}
    \caption{Illustration of \eqref{eq: cumulative formula for 231-avoiding permuton}. 
    Left: 
    the point $(x',y')$ is one of the closest to the diagonal of $[0,1]^2$ in $G_f(x,y)$.
    Right:
    It can actually be shown that the permuton with RLM-curve $f$ has support contained in the shaded area (see the proof of Lemma~\ref{lem:231_Uniqueness}), explaining the formula \eqref{eq: cumulative formula for 231-avoiding permuton}.}
    \label{fig: cumulative formula for 231-avoiding permuton}
\end{figure}

\begin{lem}\label{lem:construction_muf}
Let $f\in\mathcal{F}$.
There exists a unique $231$-avoiding permuton $\mu_f$ such that
\begin{equation}\label{eq:def_muf}
    \text{for all }(x,y)\in[0,1]^2,\quad
    \mu_f([x,1] \times [0,y])=P_f(x,y).
\end{equation}
Moreover, the RLM-curve of $\mu_f$ is $f$.
\end{lem}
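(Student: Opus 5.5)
The plan is to treat \eqref{eq:def_muf} as prescribing a ``lower-right distribution function''. For a Borel measure on $[0,1]^2$, the values $\mu([x,1]\times[0,y])$ determine $\mu$: these rectangles form a $\pi$-system generating the Borel $\sigma$-algebra, and a $\pi$--$\lambda$ argument gives \emph{uniqueness}. For \emph{existence}, I define $\mu_f$ on half-open rectangles $[x_1,x_2)\times(y_1,y_2]$ by the inclusion--exclusion increment
\[
\Delta(x_1,x_2;y_1,y_2):=P_f(x_1,y_2)-P_f(x_2,y_2)-P_f(x_1,y_1)+P_f(x_2,y_1).
\]
I then extend it via Carathéodory, exactly as one builds a measure from a bivariate CDF. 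This requires three facts: $P_f$ is continuous (already noted after \eqref{eq: cumulative formula for 231-avoiding permuton}), its boundary values are right, and $\Delta\ge 0$.

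The boundary values are checked directly. Since $f(0)\le 0$, we have $f(0)=0$, so $(0,0)\in G_f(0,y)$. Because $f(x)\le x$ forces $x'-y'\ge 0$ on $G_f$, the minimum is $0$ and $P_f(0,y)=y$. Likewise $f(1)=1$ gives $(1,1)\in G_f(x,1)$ and $P_f(x,1)=1-x$. Together with $P_f(1,y)$ being $0$ (or the point mass at the corner, which vanishes by continuity), this gives total mass $1$ and uniform marginals. So once $\Delta\ge0$ holds, $\mu_f$ is a permuton.

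The main obstacle is the positivity $\Delta\ge 0$. On $H_f$ I write $P_f=y-x+m(x,y)$ with $m(x,y)=\min_{G_f(x,y)}(x'-y')$. The affine part $y-x$ has zero rectangular increment, so only $m$ matters. Set $a=m(x_2,y_2)$, $b=m(x_1,y_1)$ and $c=\min$ of $x'-y'$ over $G_f\cap([x_1,x_2)\times(y_1,y_2])$. Then
\[
m(x_1,y_2)=\min(a,b,c),
\]
while $G_f(x_2,y_1)\subseteq G_f(x_2,y_2)\cap G_f(x_1,y_1)$ gives $m(x_2,y_1)\ge\max(a,b)$. If $c$ is irrelevant, the increment is at least $\min(a,b)+\max(a,b)-a-b=0$. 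If $G_f$ meets the middle rectangle at some $(x',y')$, monotonicity of $f$ forces $f>y_1$ on $[x_2,1]$. Hence $(x_2,y_1)$ lies strictly below the graph, $P_f(x_2,y_1)=0$, and that corner uses the second branch of \eqref{eq: cumulative formula for 231-avoiding permuton}. I will do the case analysis according to which of the four corners lie in $H_f$. In each case I use $P_f\ge0$, and the fact that $P_f(x,y)$ is the $\ell^1$-distance to $G_f$, which is $1$-Lipschitz in each coordinate. This should reduce every case to the inequality above or to a trivial one.

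For the remaining claims I argue as follows. Since $P_f(x,f(x))=0$ and $P_f(x,y)>0$ for $y>f(x)$ (the distance to $G_f$ in the lower-right quadrant is positive off the graph), the RLM-curve of $\mu_f$ is exactly $f$. For $231$-avoidance I identify the support of $\mu_f$. Wherever the minimizer of $m$ is locally constant in an open subset of $H_f$, $P_f$ is affine of the form $y-x+\mathrm{const}$, so $\Delta=0$ and there is no mass. The support is therefore contained in $G_f$ together with the locus where the minimizer switches, which I expect to be a union of decreasing (antidiagonal-type) pieces as in the shaded region of Figure~\ref{fig: cumulative formula for 231-avoiding permuton}. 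I then check that three points of this set cannot form a $231$ pattern. A point of type ``$3$'' must be above a ``$2$'' to its left and a ``$1$'' to its right, and this configuration would contradict the minimality defining $m$ or the monotonicity of $f$. This geometric identification of the support is the second delicate step. If it proves awkward, I would instead show $\dens{231}{\mu_f}=0$ by approximating $f$ by step functions, for which $\mu_f$ is the permuton $\mu_\sigma^\diagdown$ of an explicit $231$-avoiding permutation, and pass to the limit using continuity of $f\mapsto P_f$ in sup norm.
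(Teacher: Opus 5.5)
Your overall architecture (uniqueness by a $\pi$-system argument, existence via rectangle increments and Carathéodory, boundary values, identification of the RLM-curve) matches the paper. Your $(a,b,c)$ inequality for rectangles whose lower-right corner lies in $H_f$ is exactly the paper's use of Lemma~\ref{lem:inf}.

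The genuine gap is the $231$-avoidance, which you only sketch. Your description of the support is conjectural. For general $f\in\mathcal F$ the off-graph part of the support can be a tree-like union of infinitely many decreasing arcs, roughly one per sub-excursion of the associated excursion $\varphi$. So ``check that no three points form a $231$'' is not routine, and you give no mechanism for the contradiction. Your remark that $\Delta=0$ where the minimizer does not switch is the right seed. It has to be quantified, as the paper does, through the strict case of Lemma~\ref{lem:inf}. If $\mu_f$ charges every small square around $p=(u,v)$, then the minimum of $x'-y'$ over $G_f(u+\eps,v+\eps)$ is attained only in the horizontal band through $p$, and the minimum over $G_f(u-\eps,v-\eps)$ is attained only in the vertical band through $p$. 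Apply the first statement to the ``$2$'' point $(x_1,y_1)$ and the second to the ``$3$'' point $(x_2,y_2)$. Since the ``$1$'' point lies weakly above the graph, $x_2<f^{-1}(y_1)$. Then each minimizer lies in the other's quadrant, so each would have to be strictly closer to the diagonal than the other, a contradiction.

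Your fallback can be made rigorous, but it needs more than you state. First, you must prove $\mu_{f_\sigma}=\mu^\diagdown_\sigma$, for instance by the computation in the proof of Lemma~\ref{lem:231_Uniqueness}, which does not use the present lemma. Second, a jump of $f$ at a non-grid point cannot be matched in sup norm by RLM step functions. The approximation must therefore be in the sense of completed graphs, as in Lemma~\ref{lem:bicontinuity}, followed by weak convergence of $\mu_{f_n}$ and continuity of $\dens{231}{\cdot}$.

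There is a second, smaller gap. Since you define increments on \emph{all} rectangles, you must also handle rectangles straddling the graph, and $P_f\ge0$ plus Lipschitz bounds give no sign information. If only the lower-right corner $(x_2,y_1)$ is below the graph, you need $P_f(x_1,y_2)\ge P_f(x_2,y_2)+P_f(x_1,y_1)$, that is, $\min(a,b,c)\ge a+b-(x_2-y_1)$. This holds, but only by using specific points of $G_f$:
\begin{itemize}
\item the point $(x_2,f(x_2))$ gives $a<x_2-y_1$;
\item the point of $G_f$ at height $y_1$ gives $b\le x_2-y_1$;
\item any $(p,q)\in G_f$ in the middle rectangle gives $a\le x_2-q$ and $b\le p-y_1$, hence the bound against $c$.
\end{itemize}
The cases with two corners below the graph need monotonicity of $P_f$ in each variable. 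Note also that $P_f$ is the $\ell^1$-distance to $G_f$ only on $H_f$; it vanishes below the graph.

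The paper sidesteps this by defining the pre-measure only on rectangles contained in $H_f$, where your inequality suffices, and putting no mass on $\{y<f(x)\}$. This is consistent with \eqref{eq:def_muf} there, because $[x,1]\times[0,y]$ misses $H_f$ when $y<f(x)$.
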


\begin{proof}
The uniqueness is straightforward. 
It remains to construct a measure $\mu_f$ satisfying \eqref{eq:def_muf} and to verify that it is a permuton, that it is $231$-avoiding, and that its RLM-curve is $f$.

We first check that for any pair $(x_1,y_1)$, $(x_2,y_2)$ in $H_f$ with $x_1<x_2$ and $y_1 >y_2$, we have
\begin{equation}
    \label{eq:Condition_Bivariate_Cumulative_Repartition_Function}
    P_f(x_1,y_1) - P_f(x_1,y_2) - P_f(x_2,y_1) + P_f(x_2,y_2) \ge 0.
\end{equation} 
It is equivalent to check that
\[\min_{(x',y') \in G_f(x_1,y_1)} \! (x'-y')
-\min_{(x',y') \in G_f(x_1,y_2)} \! (x'-y')
-\min_{(x',y') \in G_f(x_2,y_1)} \! (x'-y')
+\min_{(x',y') \in G_f(x_2,y_2)} \! (x'-y') \ge 0.\]
But the above follows from Lemma~\ref{lem:inf} since $G_f(x_1,y_2) \cap G_f(x_2,y_1) =G_f(x_2,y_2)$ and $G_f(x_1,y_2) \cup G_f(x_2,y_1) =G_f(x_1,y_1)$.
For the latter equality, it is important to note that $(x_2,y_2) \in H_f$.
Hence~\eqref{eq:Condition_Bivariate_Cumulative_Repartition_Function} is proved. 

We now define a pre-measure $\mu_f$ on finite unions of semi-open rectangles included in $H_f$ by
\[\mu_f([x_1,x_2) \times (y_2,y_1]) = 
P_f(x_1,y_1) - P_f(x_1,y_2) - P_f(x_2,y_1) + P_f(x_2,y_2).\]
By Carathéodory's extension theorem, we can extend $\mu_f$ to a Borel measure on the set $H_f$ that we still denote $\mu_f$.
This measure $\mu_f$ can be further extended to a measure on $[0,1]^2$ by putting no mass on the complement set  $\{(x,y) \in [0,1]^2: y < f(x)\}$.
Note that $\mu_f$ indeed satisfies \eqref{eq:def_muf}. Let us prove that it is a permuton: for $y \in [0,1]$, we have
\[\mu_f([0,1]\times [0,y])=P_f(0,y)=y - \min_{(x',y') \in G_f(0,y)} (x'-y').\]
But since $f(0)=0$ and $f(x) \le x$, the minimum is reached for $x'=y'=0$, and we get $\mu_f([0,1]\times [0,y])=y$.
Similarly, $\mu_f([x,1]\times [0,1])=1-x$ (in this case, the minimum is reached for $x'=y'=1$), proving that $\mu_f$ is a permuton.

The next step is to prove that $\mu_f$ is $231$-avoiding. The proof will be illustrated by Figure~\ref{fig:proof_muf_avoids_231}.
Assume for the sake of contradiction that we have three points $(x_1,y_1)$, $(x_2,y_2)$ and $(x_3,y_3)$ in the support of $\mu_f$ such that $x_1<x_2<x_3$ and $y_3<y_1<y_2$.
Since $(x_3,y_3)$ is both the lowermost and rightmost point of the three,
the points $(x_1,y_1)$, $(x_2,y_2)$ cannot lie on the curve $G_f$,
and we necessarily have $y_i > f(x_i)$ for $i \in \{1,2\}$. Moreover, since $(x_3, y_3)$ lies weakly above $G_f$, we can write
\begin{equation}\label{eqn:x2_and_y1}
x_2<x_3 \leq f^{-1}(y_3) \leq f^{-1}(y_1).
\end{equation}
For small $\eps>0$, we have $\mu_f([x_1-\eps,x_1+\eps) \times (y_1-\eps,y_1+\eps])>0$.
Looking at the strict inequality case in \eqref{eq:Condition_Bivariate_Cumulative_Repartition_Function} and in Lemma~\ref{lem:inf}, this implies that the minimum of $x'-y'$ over $G_f(x_1+\eps,y_1+\eps)$ is reached only on the horizontal band
\[
[x_1+\eps,1] \times [y_1-\eps,y_1+\eps].
\]
Let $m_1$ be a point in this band where the minimum is attained, as on Figure~\ref{fig:proof_muf_avoids_231}. Similarly, the minimum of $x'-y'$ over $G_f(x_2-\eps,y_2-\eps)$ is reached only on the vertical band
\[ [x_2-\eps,x_2+\eps] \times [0,y_2-\eps]\]
and it is reached at a point $m_2$.
By~\eqref{eqn:x2_and_y1}, we have $x_1<x_2<f^{-1}(y_1)$ so for $\eps$ small enough,
the intersection of $G_f$ with the band $[x_1+\eps,1] \times [y_1-\eps,y_1+\eps]$ is included in $G_f(x_2-\eps,y_2-\eps)$, so $m_1 \in G_f(x_2-\eps,y_2-\eps)$. But the definition of $m_2$ implies that $m_2$ is strictly closer than $m_1$ to the main diagonal. Vice-versa, we also have $m_2 \in G_f(x_1+\eps,y_1+\eps)$, so by definition $m_1$ is strictly closer than $m_2$ to the main diagonal, and we get a contradiction (see Figure~\ref{fig:proof_muf_avoids_231}). This proves that $\mu_f$ is $231$-avoiding.

Finally, we verify that the RLM-curve of $\mu_f$ is $f$. Let us call $g$ this RLM-curve.
That $g(x) \ge f(x)$ is trivial since for any $y < f(x)$, one has $\mu_f([x,1] \times [0,y])=0$. Let us prove the reverse inequality.
Fix $x$ in $[0,1]$, and consider $y>f(x)$. Since $f$ is càdlàg, there exists $\eps>0$ such that for $x' \in [x,x+\eps]$, we have $f(x') \le y-\eps$. This implies that all points $(x',y')$ on $G_f(x,y)$ are at $L^1$-distance at least $\eps$ from $(x,y)$ 
(either they satisfy $x' \ge x+\eps$ or $y'\le y-\eps$),
so that $\mu_f([x,1] \times [0,y]) \ge \eps >0$, so $g(x) \le y$. We conclude by letting $y \to f(x)$.
\end{proof}
\begin{figure}
    \centering
    \includegraphics{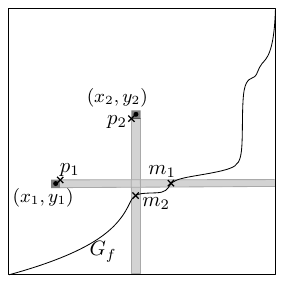} 
    \caption{Illustration of the proof of Lemma~\ref{lem:construction_muf}. Left: we consider two hypothetical points $(x_1,y_1)$ and $(x_2,y_2)$ with $y_1 <y_2$ and $x_1<x_2<f^{-1}(y_1)$.
    The minimum of $x'-y'$ on the curve $G_f$ below and on the right of $p_1:=(x_1+\eps,y_1+\eps)$ is reached somewhere
    in the horizontal light gray band, namely in $m_1$.
    On the other hand, the minimum of $x'-y'$ on the curve $G_f$ below and on the right of $p_2:=(x_2-\eps,y_2-\eps)$
    is reached in the vertical light gray band, resp.~$m_2$. This is impossible, since $m_1$ would need to be at the same time closer and further from the main diagonal than $m_2$.
	}
    \label{fig:proof_muf_avoids_231}
\end{figure}

Our next goal is to prove that the map $\mu \mapsto f_\mu$ is one-to-one on the space of $231$-avoiding permutons.

\begin{lem}\label{lem:231_Uniqueness}
Let $f\in\mathcal{F}$.
Then, the above construted permuton $\mu_f$ is the unique $231$-avoiding permuton
whose RLM-curve is equal to $f$.
\end{lem}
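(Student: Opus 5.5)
We need to show that any $231$-avoiding permuton $\mu$ with $f_\mu = f$ satisfies $\mu([x,1]\times[0,y]) = P_f(x,y)$ for all $(x,y)$. Uniqueness then follows: these values determine $\mu$ on all lower-right rectangles, hence on all rectangles by inclusion--exclusion, so $\mu=\mu_f$. We already know from Lemma~\ref{lem:construction_muf} that $\mu_f$ exists and has RLM-curve $f$. If $y<f(x)$, then $\mu([x,1]\times[0,y])=0$ by definition of $f_\mu$, so only points $(x,y)\in H_f$ need work.

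The key structural step is a support lemma, the shaded region of Figure~\ref{fig: cumulative formula for 231-avoiding permuton}. For a $231$-avoiding permuton with RLM-curve $f$, I would show: if $(x',y')\in G_f$, then $\mu$ puts no mass on
\[ [0,x')\times(y',1]\cap\{\text{points lying "above-left" of } (x',y')\}\]
in the following sense. A point $(a,b)$ of the support with $a<x'$ and $b>y'$, combined with a support point near $(x',y')$ lying just below/right of it, would yield a $231$ pattern if there is also a support point $(a_2,b_2)$ with $a<a_2<x'$ and $b_2>b$.

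More precisely, I expect the usable statement to be the following. For $(x',y')\in G_f$, every support point $(a,b)$ with $b\le y'$ must have $a\ge$ the leftmost support point with height in $(b,y']$, and so on. Making this precise is fiddly. Instead I would prove two inequalities directly.

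\emph{Lower bound.} For any $(x',y')\in G_f(x,y)$, I would decompose
\[\mu([x,1]\times[0,y]) \ge \mu([x,x']\times[0,y]) + \mu([x',1]\times[0,y]) - \ldots\]
A cleaner route uses $\mu([x',1]\times[0,y'])=0$ up to boundary, which holds because $y'\le f(x')$, and the permuton marginals. This gives
\[\mu([x,1]\times[0,y]) \ge \mu([x,1]\times[0,1]) - \mu([x,1]\times(y,1]) \ge \ldots\]
The goal is to show $\mu([x,1]\times[0,y]) \ge (y-y')-(x'-x)$ for each such point. Indeed, $\mu([x,1]\times(y',y]) \ge (y-y') - \mu([0,x)\times(y',y])$, and the $231$-avoidance together with the fact that $(x',y')$ lies on the RLM-curve should force $\mu([0,x)\times(y',y])\le$ the mass in columns $[x,x')$ lying above $y$. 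This bound gives the desired inequality after taking the infimum.

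\emph{Upper bound.} I would choose $(x',y')$ attaining the minimum and show that $\mu([x,1]\times[0,y])\le (y-x)+(x'-y')$. The idea is that all mass in $[x,1]\times[0,y]$ lies in $[x,x']\times[y',y]$ plus the region near the curve. Mass in $[x',1]\times[0,y']$ is zero, so the region of interest is only $[x,x')\times[y',y]$. Its mass is at most $(x'-x)-\mu([x,x')\times(y,1])-\mu([x,x')\times[0,y'))$. Minimality of $(x',y')$ together with $231$-avoidance should force the columns $[x,x')$ to contain no mass above $y$ that can be "exchanged".

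\emph{Main obstacle.} The main obstacle is this pair of exchange inequalities: relating $231$-avoidance to the claim that, within the strip between $x$ and the optimal $x'$, support points above $y$ and below $y'$ are arranged compatibly with the curve. My first attempt would mimic the $m_1,m_2$ argument of Lemma~\ref{lem:construction_muf}. If the support of $\mu$ violated the formula, I would locate a rectangle where the mixed difference of $\mu([x,1]\times[0,y])$ disagrees with that of $P_f$. I would then exhibit a support point above-left of a point of $G_f$ together with one between them, and derive a $231$ occurrence, using that points of $G_f$ are limits of support points (a consequence of the definition of $f_\mu$ as a maximum).
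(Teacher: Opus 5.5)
There is a genuine gap: your reduction to proving $\mu([x,1]\times[0,y])=P_f(x,y)$ on $H_f$ is fine, but the two inequalities are set up incorrectly and the one that carries the content is never proved. The bound $\mu([x,1]\times[0,y])\le P_f(x,y)$ is the trivial direction and needs neither minimality nor $231$-avoidance. For \emph{any} $(x',y')\in G_f(x,y)$, split $[x,1]\times[0,y]$ into $[x,x')\times[0,y]$, $[x',1]\times[0,y']$ (which is $\mu$-null since $y'\le f(x')$) and $[x',1]\times(y',y]$. The uniform marginals then give the exact identity
\[
\mu([x,1]\times[0,y])=(x'-x)+(y-y')-\mu\big([x,x')\times(y,1]\big)-\mu\big([0,x')\times(y',y]\big).
\]
So the lemma is equivalent to both defect terms vanishing at a minimizer of $x'-y'$ on $G_f(x,y)$. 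Your ``lower bound'' aims at $(y-y')-(x'-x)$, which has the wrong sign. It also proposes to prove a bound at every point of $G_f(x,y)$ and then take the infimum, which is backwards: a bound valid at every point yields the supremum, whereas here the bound must be proved at a minimizer. Your ``upper bound'' paragraph says the only relevant region is $[x,x')\times[y',y]$. In fact that rectangle is part of the second defect term, so at a minimizer it must be $\mu$-null, and the mass actually sits in $[x,x')\times[0,y']$ and $[x',1]\times(y',y]$.

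The missing ingredient is the structural fact that makes the defects vanish, which you leave as ``should force'' and ``main obstacle''. The paper obtains it via excursion corners: points $(x,y)$ with $y\ge f(x)$ such that the minimum of $x'-y'$ over $G_f(x,y)$ is attained both at $(x,f(x))$ and at $(f^{-1}(y),y)$. A minimizer for a general $(x,y)$ yields two such corners, $(x_l,y')$ and $(x',y_r)$. The two defect terms are then masses above the first corner's square and to the left of the second's. The key claim is that a $231$-avoiding $\mu$ with RLM-curve $f$ puts no mass to the left of or above any excursion square $[x,f^{-1}(y)]\times[f(x),y]$. For the left side, suppose $(x_1,y_1)$ is a support point with $x_1<x$, $f(x)<y_1<y$ and $\mu([0,x]\times[f(x),y_1])>0$. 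The marginals force mass in the columns $[x,x+y_1-f(x)]$ above height $y_1$, i.e.\ a support point $(x_2,y_2)$ with $x<x_2<x+y_1-f(x)$ and $y_2>y_1$. The corner property keeps $G_f$ over these columns below the slope-$1$ line through $(x,f(x))$, so $f(x_2)<y_1$. The definition of $f$ then produces a support point to the right of $x_2$ below $y_1$, completing a $231$. Your parenthetical ``points of $G_f$ are limits of support points'' is false in general: for the antidiagonal permuton, $G_f=[0,1]\times\{0\}\cup\{1\}\times[0,1]$ meets the support only at $(1,0)$. So the third point of the pattern must be produced from the definition of $f$ as above, not picked arbitrarily on the curve. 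Without this step the proposal does not prove the lemma.
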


\begin{proof}
    Let $\nu$ be a $231$-avoiding permuton whose RLM-curve is equal to $f$.
    A point $(x,y)$ in $[0,1]^2$ with $y \ge f(x)$ will be called an excursion corner of $f$ if the minimum of $x'-y'$ on $G_f(x,y)$ is attained both at $(x,f(x))$ and at $(f^{-1}(y),y)$.
    We let $S_{x,y}$ be the corresponding square $[x,f^{-1}(y)] \times [f(x),y]$.
    We claim that necessarily $\nu(S_{x,y})=f^{-1}(y)-x=y-f(x)$, i.e.~that there is no mass on the left, on the right, below or above this square.
    
    That there is no mass below and on the right of $S_{x,y}$ is trivial since $f$ is the RLM-curve of $\nu$.
    Assume that there is some mass on the left, then we can find a point $(x_1,y_1)$ in the support of $\nu$, verifying $x_1<x$ and $f(x)<y_1<y$ and $\nu([0,x] \times [f(x),y_1])>0$.
    Consider the square $S=[x,x+y_1-f(x)] \times [f(x),y_1]$; see Figure~\ref{fig:231_Uniqueness}, left. Since $\nu$ is a permuton with $\nu([0,x] \times [f(x),y_1])>0$, we have
    \[ \nu(S) < \nu( [0,1] \times [f(x),y_1] ) =y_1-f(x).\]
    Thus there is some mass of $\nu$ above $S$ (since $f$ is the RLM curve of $\nu$, there cannot be any mass of $\nu$ below $S$),
    i.e.~there exists a point $(x_2,y_2)$ in the support of $\nu$ with $x<x_2<x+y_1-f(x)$ and $y_2>y_1$.
    Since $(x,y)$ is an excursion corner, the restriction of the curve $G_f$ to $[x,x+y_1-f(x)]$ lies inside $S$ and we can find $(x_3,y_3)$ on 
    $G_f \cap (x_2, x+y_1-f(x)] \times [f(x), y_1)$.
    The next step is to let $x'_3$ be maximal such that $(x'_3,y_3)$ is on $G_f$;
    note that $(x'_3,y_3)$ might be outside $S$, but this ensures that $(x'_3,y_3)$ is in the support of $\nu$ (while $(x_3,y_3)$ might not be).
    Finally, we have constructed three points $(x_1,y_1)$, $(x_2,y_2)$ and $(x'_3,y_3)$ which are in the support of $\nu$ and satisfy $x_1<x<x_2<x_3 \le x'_3$ and $y_3<y_1<y_2$, i.e.~form a $231$ pattern.
    This is in contradiction with the assumption that $\nu$ is $231$-avoiding.
    We have proved that, for any excursion corner $(x,y)$ there is no mass on the left of $S_{x,y}$.
    A similar argument ensures that there is no mass above $S_{x,y}$ either.
    
    We now consider any point $(x,y)\in(0,1)^2$ (not necessarily an excursion corner) weakly above the RLM-curve $f$ and we let $(x',y')$ minimizing the quantity $x'-y'$
    on the set $G_f(x,y)$. We denote by $(x_l,y_l)$ the rightmost point on $G_f$ for which $x_l<x$ and $x_l-y_l=x'-y'$.
    By construction $(x_l,y')$ is an excursion corner of $f$. 
    See Figure~\ref{fig:231_Uniqueness}, right.
    Similarly, let $(x_r,y_r)$ be the leftmost point on $G_f$ satisfying $y_r \geq y$ and $x_r-y_r=x'-y'$.
    Again $(x',y_r)$ is an excursion corner of $f$. Using that $\nu$ has no mass below $G_f$, nor on the left of or above excursion squares, we can write
    \[ \nu([x,1] \times [0,y]) = \nu([x,x'] \times [y_l,y']) + \nu([x',x_r] \times [y',y])=x'-x+y-y'.\]
    Hence $\nu$ coincides with the permuton $\mu_f$ introduced in Lemma~\ref{lem:construction_muf}.
    \end{proof}
    
\begin{figure}
    \[
    \includegraphics{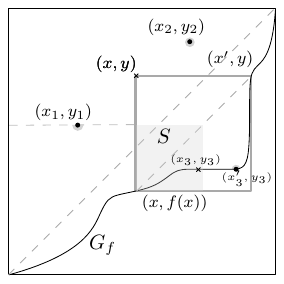} \qquad \includegraphics{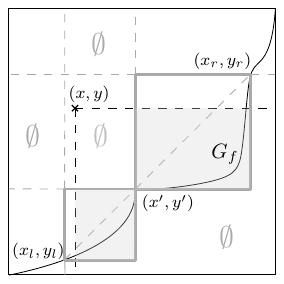}\]
    \caption{Illustration of the proof of Lemma~\ref{lem:231_Uniqueness}. The black curve is the graph $G_f$ of $f$, and gray fat squares represent excursions of $f$. The black disks with some gray zones around are points of the support of $\nu$ with some zone having a positive mass around them. The crosses are points that are not necessarily in the support.  {\em Left:} we assume by contradiction that there is some mass below a point $(x_1,y_1)$ 
    on the left of the excursion square. Then there must be some mass around $(x_2,y_2)$
    which is above the light-gray square $S$ (notation from the proof),
    and around a point $(x'_3,y_3)$ on $G_f$, which creates the forbidden pattern 231.
    {\em Right:} We want to compute the $\nu([x,1] \times [0,y])$.  Empty set symbols indicate zones where $\nu$ does not have any mass.
    The mass  $\nu([x,1] \times [0,y])$ is obtained by summing the contributions
    of the two light gray rectangles.}
    \label{fig:231_Uniqueness}
\end{figure}

We end this section with a more satisfying description of the permuton $\mu_f$ in a special case:
if the RLM curve is strictly convex and symmetric, then the support of $\mu_f$ is the union of its graph with a piece of the antidiagonal, with explicit densities.
This special case will be the case of interest for us later.

\begin{prop}\label{prop: support and density of 231 permuton}
    Let $f\in\mathcal{F}$, and let $\mu_f$ be the unique permuton with RLM curve $f$.
    Assume the following:
    \begin{enumerate}
        \item $f$ is increasing and continuous on $[0,1]$. 
        In particular, there is a unique $x^*\in(0,1)$ such that $f(x^*) = 1-x^*$.
        \item The graph $G_f$ of $f$ is invariant under the symmetry $(x,y) \mapsto (1-y, 1-x)$, that is: 
        \[ \text{for all } x\in[0,1],\quad f(1-f(x)) = 1-x .\]
        \item $f$ is strictly convex.
    \end{enumerate}
    Then $\mu_f$ equals $\mu_\nearrow + \mu_\searrow$ where:
    \begin{itemize}
        \item $\mu_\nearrow$ is the pushforward of the measure with cumulative distribution function (CDF) 
        \[ F_\nearrow : x\in[0,1] \mapsto \left\{
        \begin{array}{ll}
            f(x)
            &\text{ if } x\le x^* \,;\\
            1 + x- 2x^*
            &\text{ if } x\ge x^* \,;
        \end{array}
        \right. \]
        via $x\mapsto(x,f(x))$;
        \item $\mu_\searrow$ is the pushforward of the measure with CDF
        \[ F_\searrow : x\in[0,1] \mapsto \left\{
        \begin{array}{ll}
            x - f(x)
            &\text{ if } x\le x^* \,;\\
            2x^* - 1
            &\text{ if } x\ge x^* \,;
        \end{array}
        \right. \]
        via $x\mapsto(x,1-x)$.
    \end{itemize}
    The support of $\mu_\nearrow$ is the graph $G_f$ of $f$, and the support of $\mu_\searrow$ is the antidiagonal line from $(1,0)$ to $(x^*, f(x^*))$.
\end{prop}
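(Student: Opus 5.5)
By Lemma~\ref{lem:231_Uniqueness}, it suffices to show three things about $\mu:=\mu_\nearrow+\mu_\searrow$: it is a permuton, it is $231$-avoiding, and its RLM-curve is $f$. Then $\mu=\mu_f$. An equivalent route is to check directly that $\mu([x,1]\times[0,y])=P_f(x,y)$ for all $(x,y)$, as in Lemma~\ref{lem:construction_muf}. I plan to use the first route, with the formula for $P_f$ serving as a check.

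\emph{Step 1: $\mu$ is a nonnegative measure.} Since $f$ is strictly convex with $f(0)=0$ and $f(1)=1$, its slopes increase. The symmetry $f(1-f(x))=1-x$ sends $x^*$ to itself. Differentiating (a.e., or via one-sided derivatives) gives $f'(1-f(x))\,f'(x)=1$, so $f'(x^{*\pm})$ bracket $1$. Hence $f'\le 1$ on $[0,x^*)$ and $f'\ge1$ on $(x^*,1]$. It follows that $F_\nearrow$ and $F_\searrow$ are nondecreasing: $x-f(x)$ is nondecreasing on $[0,x^*]$, and $F_\nearrow$ has slope $1$ after $x^*$. Both are continuous at $x^*$ because $f(x^*)=1-x^*$. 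The total mass is $F_\nearrow(1)+F_\searrow(1)=2-2x^*+2x^*-1=1$.

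\emph{Step 2: uniform marginals.} The horizontal marginal has CDF $F_\nearrow+F_\searrow$, which equals $x$ on $[0,x^*]$ and $1+x-2x^*+2x^*-1=x$ afterwards. For the vertical marginal, I need $\mu([0,1]\times[0,y])=y$. For $y\le f(x^*)=1-x^*$, the antidiagonal piece contributes nothing: it lies at heights $1-x\ge 1-x^*$ for $x\le x^*$. The curve contributes $F_\nearrow(f^{-1}(y))$. This equals $\mu_\nearrow$-mass $f(f^{-1}(y))=y$ when $f^{-1}(y)\le x^*$. Otherwise it equals $1+f^{-1}(y)-2x^*$, and using the symmetry this should reduce to $y$ after adding the antidiagonal mass at heights in $[1-x^*,y]$, namely $F_\searrow(x^*)-F_\searrow(1-y)=2x^*-1-(1-y)+f(1-y)$. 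I then use $f(1-y)=1-f^{-1}(y)$, which is the symmetry applied at $x=f^{-1}(y)$. This bookkeeping, especially matching the cases $y\lessgtr 1-x^*$ correctly, is the computational core. I expect it to be the main obstacle, though it is routine once the symmetry identity is written down.

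\emph{Step 3: $231$-avoidance and RLM-curve.} The support is the union of the increasing graph $G_f$ (only the portion where $F_\nearrow$ increases, which is all of it) and the antidiagonal segment from $(x^*,1-x^*)$ to $(0,1)$. Despite the statement's wording, this lies on the left, above the graph. A $231$ pattern $(x_1,y_1),(x_2,y_2),(x_3,y_3)$ needs its last point lowest and rightmost. Two points on $G_f$ are increasing. Antidiagonal points lie weakly above and to the left of all graph points with $x\ge x^*$, and above the graph for $x<x^*$ because $1-x>f(x)$. I will check the cases directly. For $1,2$ on the antidiagonal they are decreasing, which is not allowed. For $1$ on the antidiagonal and $2$ on the graph, we have $y_2>y_1\ge 1-x^*$, forcing $x_2>x^*$, and then $3$ to the right of $x_2$ is higher on the graph, so $y_3>y_1$. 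The other cases are similar. For the RLM-curve, no mass lies strictly below $G_f$, and every graph point is charged with positive density near it by strict monotonicity, so $f_\mu=f$. Lemma~\ref{lem:231_Uniqueness} then concludes, and the support claims follow since $F_\nearrow$ and $F_\searrow$ are strictly increasing on the relevant ranges by strict convexity.
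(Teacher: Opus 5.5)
Your proposal is correct and follows essentially the same route as the paper. Like the paper, you check that $\mu_\nearrow+\mu_\searrow$ is a genuine measure because $F_\nearrow$ and $F_\searrow$ are monotone (by the symmetry plus convexity), verify the vertical marginal separately below and above $f(x^*)$ using $f(1-y)=1-f^{-1}(y)$, read off $231$-avoidance and the RLM curve from the support, and conclude with Lemma~\ref{lem:231_Uniqueness}. You are also right that the antidiagonal piece is $\{(x,1-x):0\le x\le x^*\}$, running from $(0,1)$ rather than $(1,0)$ to $(x^*,f(x^*))$; the paper's own proof uses exactly this description, so the wording in the statement is a typo.
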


\begin{proof}
    Let $\mu := \mu_\nearrow + \mu_\searrow$ be the measure defined by the proposed formulas.
    First, let us check that these measures are well-defined.
    By (1), the functions $F_\nearrow$ and $F_\searrow$ are continuous and the function $F_\nearrow$ is increasing.
    Then, by (2) and (3), the function $x\mapsto x-f(x)$ is increasing on $[0,x^*]$ and decreasing on $[x^*,1]$, and so $F_\searrow$ is monotonous.
    Thus, these functions are indeed the CDFs of two measures on $[0,1]$.

    The claim about the supports of $\mu_\nearrow$ and $\mu_\searrow$ is a direct consequence of the assumptions made on $f$.
    Note that the support of $\mu_\searrow$ can be described as
    \[ \{ (x,1-x) : x\le x^* \} = \{ (x,1-x) : 1-x\ge f(x^*)\} \,. \]
    In particular, it is immediate from the description of the support of $\mu$ that this measure is $231$-avoiding and that its RLM curve is $f$.

    Now, let us check that $\mu$ is a permuton.
    Since $F_\nearrow(x) + F_\searrow(x) = x$ for all $x\in[0,1]$, the first marginal of $\mu$ is indeed uniform.
    It remains to check that the second marginal is uniform as well.
    Fix $0\le y\le z\le 1$, and let us check that $\mu([0,1]\times[y,z]) = z-y$.
    It suffices to check this in the two cases $z\le f(x^*)$ and $y\ge f(x^*)$.

    First, assume $z\le f(x^*)$.
    The support of $\mu_\searrow$ does not intersect the band $[0,1]\times[y,z]$.
    Also, $f^{-1}(z)\le x^*$.
    Hence:
    \[
        \mu([0,1]\times[y,z]) 
        = F_\nearrow\left( f^{-1}(z) \right) - F_\nearrow\left( f^{-1}(y) \right)
        = z-y \,.
    \]
    as desired.

    Now, assume $y\ge f(x^*)$.
    Since $f^{-1}(y)\ge x^*$, we have that:
    \begin{multline*}
        \mu([0,1]\times[y,z]) 
        = F_\searrow(1-y) - F_\searrow(1-z)
        + F_\nearrow\left( f^{-1}(z) \right) - F_\nearrow\left( f^{-1}(y) \right)
        \\= ( 1-y-f(1-y) ) - ( 1-z-f(1-z) ) + ( f^{-1}(z) - f^{-1}(y) ) \,.
    \end{multline*}
    By the symmetry of $f$, it holds that $f^{-1}(x) = 1 - f(1-x)$ for all $x\in[0,1]$.
    Therefore, $\mu([0,1]\times[y,z]) = z-y$ as desired.

    In conclusion, we have shown that $\mu$ is a $231$-avoiding permuton with RLM curve $f$.
    By Lemma~\ref{lem:231_Uniqueness}, this shows that $\mu_f = \mu$.
\end{proof}

\subsection{Bicontinuity of the RLM curve parametrization}\label{subsec:231_continuity}
Recall that $\mathcal{F}$ is the space of nondecreasing càdlàg functions from $[0,1]$ to $[0,1]$ such that $f(1)=1$ and $f(x)\le x$ for all $x\in[0,1]$.
For a permuton $\mu$, its RLM-curve $f_\mu$ lives in $\mathcal F$.
To discuss continuity properties (and for large deviation principles later), it is convenient to introduce also $\varphi_\mu$ as the $1$-Lipschitz function on $[0,1]$ whose graph is obtained by rotating $G_{f_\mu}$ by $-\pi/4$, flipping it, and scaling it down by $\sqrt2$ (as on \Cref{fig: example_231,fig:parametrization}).
The function $\varphi_\mu$ is $1$-Lipschitz, nonnegative, and satisfies $\varphi_\mu(0)=\varphi_\mu(1)=0$.
We recall that we denote by $\mathcal E$ the space of such functions, 
endowed with the uniform norm.

Finally, we will also consider the (completed) graph $G_f$ of $f$ as living in the space of compact subsets of $[0,1]^2$, endowed with the Hausdorff distance $d_H$.
Namely for compact sets $G_1$ and $G_2$, 
\[d_H(G_1,G_2) = \inf \{\eps>0:\ G_1 \subseteq G_2^\eps \mbox{ and } G_2 \subseteq G_1^\eps\},\]
where for a compact set $G$ and $\eps>0$, the $\eps$-halo $G^\eps$ of $G$ is the set of points at distance at most $\eps$ from $G$.
This notion depends on an underlying metric on the ambient space, here $[0,1]^2$, and we will use the supremum norm on $[0,1]^2$ in the sequel.

\begin{lem}\label{lem: equiv cv f and G_f and varphi}
    Let $f$ and $f_n$, $n\ge1$, be functions in $\mathcal{F}$.
    Let $\varphi$ and $\varphi_n$, $n\ge1$, be the corresponding functions in $\mathcal{E}$.
    The following assertions are equivalent:
    \begin{enumerate}[label=(\roman*)]
        \item $f_n(x) \to f(x)$ for all continuity points $x$ of $f$;
        \item $G_{f_n} \to G_f$ for the Hausdorff distance;
        \item $\varphi_n \to \varphi$ for the uniform distance.
    \end{enumerate}
\end{lem}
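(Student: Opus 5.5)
The plan is to prove (ii)$\Leftrightarrow$(iii) first, using the geometry of the rotation, and then (i)$\Leftrightarrow$(ii), which is the classical statement that pointwise convergence of monotone functions at continuity points is equivalent to Hausdorff convergence of their completed graphs.

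\emph{Step 1: (ii)$\Leftrightarrow$(iii).} Let $R$ be the affine map sending $(x,y)$ to $(s,h)=\big(\tfrac{x+y}{2},\tfrac{x-y}{2}\big)$, i.e.\ rotation by $-\pi/4$, flip and scaling by $1/\sqrt2$. Then $R(G_f)$ is exactly the graph of $\varphi$ over $[0,1]$. Because $f$ is nondecreasing, the completed graph $G_f$ is a continuous curve that is monotone in the direction $x+y$. Hence $R(G_f)$ is the graph of a genuine $1$-Lipschitz function, with horizontal pieces of $G_f$ giving slope $+1$ and vertical pieces giving slope $-1$. $R$ is bi-Lipschitz with explicit constants for the sup norms, so Hausdorff convergence of $G_{f_n}$ is equivalent to Hausdorff convergence of the graphs $\Gamma_{\varphi_n}\to\Gamma_\varphi$. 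For $1$-Lipschitz functions on $[0,1]$ we have
\[ d_H(\Gamma_{\varphi_n},\Gamma_\varphi)\le \|\varphi_n-\varphi\|_\infty , \]
because each point $(s,\varphi_n(s))$ is within $\|\varphi_n-\varphi\|_\infty$ of $(s,\varphi(s))$. Conversely, if $(s,\varphi_n(s))$ is at sup-distance $\le\eps$ from some $(s',\varphi(s'))$, then $|\varphi_n(s)-\varphi(s)|\le \eps+|\varphi(s')-\varphi(s)|\le 2\eps$ by the Lipschitz property. So the two modes of convergence coincide.

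\emph{Step 2: (ii)$\Rightarrow$(i).} Let $x$ be a continuity point of $f$, and let $y_n=f_n(x)$ along a subsequence converge to some $y^*$. Since $(x,y_n)\in G_{f_n}$, Hausdorff convergence puts $(x,y^*)$ in $G_f$ (which is closed). At a continuity point, $G_f\cap(\{x\}\times[0,1])=\{(x,f(x))\}$, so $y^*=f(x)$. Compactness of $[0,1]$ then gives the full convergence $f_n(x)\to f(x)$.

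\emph{Step 3: (i)$\Rightarrow$(ii).} This is the main obstacle. The key tool is a sandwich argument using monotonicity. Fix $\eps>0$ and choose continuity points $0<t_1<\dots<t_m<1$ of $f$ with mesh $<\eps$; these are dense since $f$ has countably many jumps. We also use the fixed values $f_n(0)=f(0)=0$ (forced by $f\le x$) and $f_n(1)=f(1)=1$. For $n$ large, $|f_n(t_i)-f(t_i)|<\eps$ for all $i$. By monotonicity, on each cell $[t_i,t_{i+1}]$ both completed graphs lie in the box $[t_i,t_{i+1}]\times[f(t_i)-\eps,f(t_{i+1})+\eps]$. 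Both graphs are connected and run monotonically from near $(t_i,f(t_i))$ to near $(t_{i+1},f(t_{i+1}))$. Each point of one graph in this box is within about $2\eps$ (horizontally) or $\eps$ (vertically) of the other graph. The way to see this: a connected monotone curve crossing the box must meet every horizontal level between its endpoints, and levels near its endpoints are within $\eps$ vertically. This gives $d_H(G_{f_n},G_f)\le C\eps$.

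Care is needed at the boundary cells near $0$ and $1$, and in using the completed (vertical-segment) graphs so that connectedness holds. Alternatively, one can bypass Step 3 by proving (i)$\Rightarrow$(iii) directly. In that approach one would use Arzelà--Ascoli on the $1$-Lipschitz functions $\varphi_n$, extract a uniformly convergent subsequence with limit $\psi$, and apply the already-proved (iii)$\Rightarrow$(ii)$\Rightarrow$(i) to $\psi$. The corresponding limit curve $g$ then agrees with $f$ at common continuity points, hence everywhere, being càdlàg. This forces $\psi=\varphi$, and uniqueness of the limit gives convergence of the whole sequence. I would take this compactness route, as it avoids the box estimates.
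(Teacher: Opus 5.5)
Your proof is correct, but it closes the cycle differently from the paper.

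\textbf{The paper's route.} The paper proves (i)$\Rightarrow$(ii)$\Rightarrow$(iii)$\Rightarrow$(i).
\begin{itemize}
\item For (i)$\Rightarrow$(ii) it uses exactly the box argument you sketch and then set aside. It takes a finite $\eps$-net of continuity points $x_k$. For large $n$, monotonicity traps $G_{f_n}\cap\big((x_k,x_{k+1}]\times[0,1]\big)$ in the rectangle $[x_k,x_{k+1}]\times[f(x_k)-\eps,f(x_{k+1})+\eps]$. That rectangle is covered by the $\eps$-halo of the corresponding piece of $G_f$, and the same holds with the roles exchanged.
\item For (ii)$\Rightarrow$(iii) it uses the same Lipschitz estimate as your Step 1.
\item For (iii)$\Rightarrow$(i) it runs a limit-point argument on the rotated points $(y_n,\varphi_n(y_n))$ corresponding to $(x,f_n(x))$.
\end{itemize}

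\textbf{Comparison.} Your Step 2 is the same limit-point idea carried out directly on $G_f$. It is cleaner there, since it only needs that $G_f$ is closed and meets $\{x\}\times[0,1]$ in the single point $(x,f(x))$ when $x$ is a continuity point.

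The genuine difference is (i)$\Rightarrow$(iii). You replace the box estimate by Arzel\`a--Ascoli plus uniqueness of subsequential limits. This is why you also need the bi-Lipschitz converse (iii)$\Rightarrow$(ii), which the paper never uses.

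What each route buys:
\begin{itemize}
\item The paper's argument is effective: $d_H(G_{f_n},G_f)\le\eps$ as soon as $|f_n-f|<\eps$ on a finite $\eps$-net of continuity points.
\item Yours is shorter and avoids the bookkeeping on boundary cells and connectedness of completed graphs, but it gives no such control.
\end{itemize}

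\textbf{Points to state explicitly.} Your route depends on the following, which you should write out.
\begin{itemize}
\item The uniform limit $\psi$ lies in $\mathcal E$ (which is closed in $\mathcal C([0,1])$), and it comes from some $g\in\mathcal F$. Indeed, $R^{-1}(\Gamma_\psi)$ is a continuous curve $s\mapsto(s+\psi(s),s-\psi(s))$. It is nondecreasing in both coordinates because $\psi$ is $1$-Lipschitz, runs from $(0,0)$ to $(1,1)$ weakly below the diagonal, and is the completed graph of its upper envelope $g$.
\item When you identify $g=f$ by right-continuity at the dense set of common continuity points, the endpoint $x=1$ is handled by $f(1)=g(1)=1$.
\item To obtain convergence of the whole sequence, run the extraction starting from an arbitrary subsequence.
\end{itemize}
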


\begin{proof}
    \underline{(i)$\implies$(ii).}
    Assume that $f_n \to f$ pointwise at continuity points of $f$.
    Fix $\eps>0$.
    Since $f$ is nondecreasing, almost every point is a continuity point and we can construct a sequence $0=x_0 < x_1 < \dots < x_{\ell-1} < x_\ell=1$ of continuity points (except maybe $x_\ell$, but this does not matter in the following) which is a subdivision of $[0,1]$ with steps bounded by $\eps$.
    For each $k<\ell$, $G_f \cap \left( (x_k,x_{k+1}]\times[0,1] \right)$ is a continuous nondecreasing curve from $(x_k, f(x_k))$ to $(x_{k+1}, f(x_{k+1}))$, and thus its $\eps$-halo covers the rectangle $[x_k, x_{k+1}] \times [f(x_k)-\eps, f(x_{k+1})+\eps]$.
    
    For large enough $n$, we have $\left| f_n(x_k) - f(x_k) \right| < \eps$ for all $k\in[0,\ell]$.
    Therefore $G_{f_n} \cap \left( (x_k,x_{k+1}]\times[0,1] \right)$ is contained in the rectangle $[x_k, x_{k+1}] \times [f(x_k)-\eps, f(x_{k+1})+\eps]$.
    Since this holds for each $k<\ell$, this proves that for large enough $n$, the graph $G_{f_n}$ is contained within the $\eps$-halo of $G_f$.
    The reverse inclusion is proved with the same reasoning.
    This proves the Hausdorff convergence of completed graphs.
    \medskip
    
    \underline{(ii)$\implies$(iii).}
    Assume that $G_{f_n} \to G_f$ for the Hausdorff distance, and let $G_{\varphi}$ and $G_{\varphi_n}$, $n\ge1$, be their rotated-flipped-rescaled analogs.
    Obviously, $G_{\varphi_n} \to G_\varphi$ for the Hausdorff distance as well.
    Now let $\eps>0$.
    For large enough $n$, the graph $G_\varphi$ is contained within the $\eps$-halo of $G_{\varphi_n}$.
    Consequently, for any $x$ in $[0,1]$, there exists $x_n\in[0,1]$ such that $|x_n-x|<\eps$ and $|\varphi_n(x_n) - \varphi(x)| < \eps$.
    Since $\varphi_n$ is $1$-Lipschitz, we deduce that $|\varphi_n(x) - \varphi(x)| < 2\eps$.
    This proves that $\varphi_n \to \varphi$ uniformly.
    \medskip
    
    \underline{(iii)$\implies$(i).}
    Assume that $\varphi_n \to \varphi$ uniformly, and let $x\in[0,1]$ be a continuity point of $f$.
    Let $y\in[0,1]$ be such that the point $(y,\varphi(y))$ corresponds to the point $(x,f(x))$ after rotation, flip, and rescaling, see Figure~\ref{fig: topologies f and varphi}.
    Likewise, let $y_n\in[0,1]$ be such that the point $(y_n,\varphi_n(y_n))$ corresponds to the point $(x,f_n(x))$.
    
    Let us consider a subsequence $y_{n_k}$ of $y_n$ which converges to a limit point $y_\infty$.
    Since $\varphi_n \to \varphi$ uniformly, we get that $(y_{n_k}, \varphi_{n_k}(y_{n_k})) \to (y_\infty, \varphi(y_\infty))$.
    Let $(x_\infty, z_\infty) \in G_f$ be the point corresponding to $(y_\infty, \varphi(y_\infty))$, and note that $f(x_\infty^-) \le z_\infty \le f(x_\infty)$.
    By continuity of scaling operations, we have $(x, f_{n_k}(x)) \to (x_\infty, z_\infty)$.
    Therefore $x_\infty = x$, and since $f$ is continuous at $x$, we deduce that $z_\infty=f(x)$.
    In particular, $y_\infty = y$, i.e.~$y$ is the only possible limit point of the sequence $y_n$. This implies $y_n \to y$ so $(y_n, \varphi_n(y_n)) \to (y,\varphi(y))$ and finally $(x,f_n(x))$ converges to $(x,f(x))$.
\end{proof}

\begin{rem}
The equivalence $(G_{\varphi_n} \to G_\varphi)\ \Longleftrightarrow\ (\varphi_n \to \varphi)$ is a special case of a general result
relating convergence of functions and convergence of their graphs, see e.g.~\cite{waterhouse1976graph-convergence}.
\end{rem}

\begin{figure}
    \centering
    \includegraphics{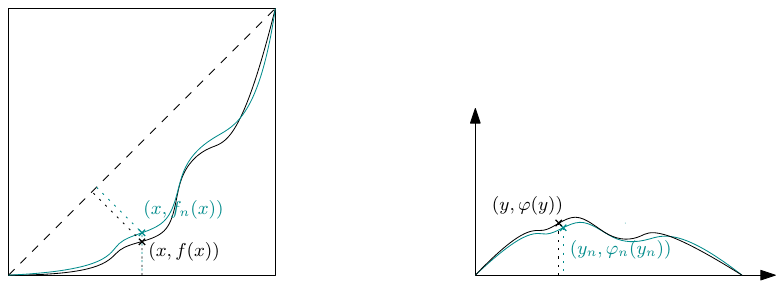}
    \caption{Construction of the sequence $(y_n)$ in the proof of Lemma~\ref{lem: equiv cv f and G_f and varphi}, (iii)$\implies$(i).}
    \label{fig: topologies f and varphi}
\end{figure}

\begin{lem}\label{lem:bicontinuity}
    The correspondence $\mu \in \PAvDTU \mapsto \varphi_\mu \in \mathcal E$ is bicontinuous.
\end{lem}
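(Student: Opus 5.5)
The plan is to use compactness so that only one direction needs a genuine argument. Both $\PAvDTU$ and $\mathcal E$ are compact metrizable spaces: $\PAvDTU$ is a closed subset of the compact space of permutons, since pattern densities are continuous (\cite{Permutons}), and $\mathcal E$ is compact by Arzelà--Ascoli. By Lemmas~\ref{lem:construction_muf} and~\ref{lem:231_Uniqueness}, the map $\mu\mapsto f_\mu$ is a bijection from $\PAvDTU$ onto $\mathcal F$. The map $f\mapsto\varphi$ is a bijection from $\mathcal F$ onto $\mathcal E$. Hence $\mu\mapsto\varphi_\mu$ is a bijection between compact Hausdorff spaces. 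It therefore suffices to prove that the inverse map $\varphi\mapsto\mu_f$ is continuous, because a continuous bijection from a compact space onto a Hausdorff space is a homeomorphism.

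For the continuity of the inverse, I would take $\varphi_n\to\varphi$ uniformly and let $f_n,f$ be the corresponding RLM curves. By Lemma~\ref{lem: equiv cv f and G_f and varphi}, the graphs satisfy $G_{f_n}\to G_f$ in Hausdorff distance, and $f_n(x)\to f(x)$ at continuity points of $f$. Since permutons have no atoms on lines and uniform marginals, weak convergence $\mu_{f_n}\to\mu_f$ is equivalent to pointwise convergence of the functions $(x,y)\mapsto\mu([x,1]\times[0,y])$. Convergence on a dense set would also suffice, because these functions are $2$-Lipschitz. So, by~\eqref{eq:def_muf}, the task reduces to showing $P_{f_n}(x,y)\to P_f(x,y)$.

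For this I would use the expression of $P_f(x,y)$ as the $\ell^1$-distance from $(x,y)$ to $G_f$ when $(x,y)\in H_f$. The aim is to show that $P_f$ can be written uniformly as
\[
P_f(x,y)=\max\Bigl(0,\; y-x+\min_{(x',y')\in G_f,\ x'\ge x,\ y'\le y}(x'-y')\Bigr).
\]
The set over which the minimum is taken is nonempty, since it contains the point $(1,\cdot)$ or $(x,f(x))$. When $y<f(x)$, every point of $G_f(x,y)$ lies to the right of $x$ and below $y$, so $x'-y'\ge x-y$ and the max returns $0$. With this formula in hand, the function $g(G,x,y):=\min\{x'-y' : (x',y')\in G\cap[x,1]\times[0,y]\}$ has to be shown continuous in $G$ for the Hausdorff topology.

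The \textbf{main obstacle} is exactly this step: minima over the constrained set $G\cap([x,1]\times[0,y])$ are not Hausdorff-continuous in general, because points can enter or leave the quadrant under small perturbations. The extra structure to exploit is that the $G_{f_n}$ and $G_f$ are connected nondecreasing curves. Upper semicontinuity of the limit follows from the $\eps$-halo inclusion. A point of $G_f$ in the quadrant is approximated by a point of $G_{f_n}$ that is $\eps$-close; moving along the monotone curve $G_{f_n}$ to re-enter the quadrant changes $x'-y'$ by $O(\eps)$. Lower semicontinuity is handled the same way with the roles of the curves swapped. One should be careful at the boundary of the quadrant, handling it via the monotone connectedness and perhaps restricting to $(x,y)$ in a dense set. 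This gives $P_{f_n}\to P_f$ pointwise, which concludes the proof.
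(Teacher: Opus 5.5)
Your reduction is the paper's. Compactness of $\mathcal E$ reduces the lemma to continuity of $\varphi\mapsto\mu_f$; Lemma~\ref{lem: equiv cv f and G_f and varphi} turns $\varphi_n\to\varphi$ into $G_{f_n}\to G_f$ in Hausdorff distance; and one then needs $P_{f_n}(x,y)\to P_f(x,y)$ pointwise.

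The gap is in this last step, which is the real content of the lemma. You only sketch it, and the formula it rests on is wrong. Every point of $G_f(x,y)$ has $x'\ge x$ and $y'\le y$, so your own inequality $x'-y'\ge x-y$ shows $y-x+\min_{G_f(x,y)}(x'-y')\ge 0$. Hence the $\max(0,\cdot)$ is vacuous, and ``the max returns $0$'' is a sign slip. Moreover, $G_f(x,y)$ is empty whenever $y<f(x^-)$; for example, take $f(t)=t$ and $(x,y)=(1/2,1/4)$. So your nonemptiness claim is false, and your expression is a minimum over the empty set exactly where $P_f$ should vanish. These are the points where the constrained set can appear or disappear under perturbation. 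Your semicontinuity argument (``moving along the monotone curve'', ``careful at the boundary'', ``perhaps a dense set'') does not treat them. It can be completed by a case analysis. Whenever re-entering the quadrant along $G_{f_n}$ fails, $(x,y)$ lies on or strictly below $G_{f_n}$, so $P_{f_n}(x,y)=0$; points strictly below $G_f$ need a separate argument. As written, though, the key step is not proved.

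The missing observation, which the paper uses and which removes your ``main obstacle'', is that the quadrant constraint is redundant. Take $y\ge f(x)$. A point of $G_f$ with abscissa $<x$ has height $\le f(x)$, so it is no closer in $\ell^1$ than $(x,f(x))$. A point of $G_f$ above height $y$ lies to the right of $x$, so it is no closer than the point where $G_f$ crosses height $y$. Hence $P_f(x,y)=\min_{(x',y')\in G_f}\|(x,y)-(x',y')\|_1$, as remarked right after~\eqref{eq: cumulative formula for 231-avoiding permuton}.

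On all of $[0,1]^2$ this gives $P_f(x,y)=\mathrm{dist}_{\ell^1}\bigl((x,y),L_f\bigr)$, where $L_f=\{(x',y')\in[0,1]^2: y'\le f(x')\}$ is the closed hypograph. The paper phrases this with the closure of $[0,1]^2\setminus H_f$. The hypograph is the safer set, since that closure misses the part of the bottom edge where $f$ vanishes on an initial interval.

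One checks $d_H(L_{f_n},L_f)\le d_H(G_{f_n},G_f)$. An unconstrained $\ell^1$-distance to a compact set moves by at most $2d_H$ (sup-norm Hausdorff) when the set is replaced. Therefore $\sup|P_{f_n}-P_f|\le 2\,d_H(G_{f_n},G_f)\to 0$ immediately, with no boundary cases and no dense-set argument.
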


\begin{proof}
	Since $\mathcal{E}$ is compact, it is sufficient to check the continuity of the reverse mapping. Therefore, let $(\varphi_n)$ be a sequence in $\mathcal{E}$ converging towards $\varphi$, and let $(\mu_n)$ and $\mu$ be the corresponding $231$-avoiding permutons. Let $(G_n)$ and $G$ be the completed graphs of their RLM curves, and let $(H_n)$ and $H$ be the parts of $[0,1]^2$ weakly above $(G_n)$ and $G$.
	Thanks to Lemma~\ref{lem: equiv cv f and G_f and varphi}, it holds that $G_n \to G$ for the Hausdorff distance, and therefore $\overline{[0,1]^2\setminus H_n} \to \overline{[0,1]^2\setminus H}$ as well.
	For any $x,y \in [0,1]$, let $P_n(x,y) = \mu_n\left( [x,1]\times[0,y] \right)$. We recall from~\eqref{eq: cumulative formula for 231-avoiding permuton} that
	\begin{equation}\label{eq: bicontinuity proof - cumulative formula for 231-avoiding permuton}
	P_n(x,y) = \mathrm{dist}_{L^1}\big( (x,y) \,,\, [0,1]^2\setminus H_n \big)
	\end{equation}
	Since the map $\mathrm{dist}_{L^1}\big( (x,y) , \cdot \big)$ is continuous on closed subsets of $[0,1]^2$ for the Hausdorff distance, we get that
	\[
	P_n(x,y) = \mathrm{dist}_{L^1}\big( (x,y) \,,\, \overline{[0,1]^2\setminus H_n} \big)
	\to \mathrm{dist}_{L^1}\left( (x,y) \,,\, \overline{[0,1]^2\setminus H} \right) = P(x,y) \,.
	\]
	Since this holds for all $(x,y)\in[0,1]^2$, this proves that $\mu_n \to \mu$.

\end{proof}

\section{Large deviation principle for 231-avoiding permutations}

The goal of this section is to provide a proof of Proposition \ref{prop:LDP_231_pi_intro}, that is, to establish a large deviation principle for $\varphi_{\sigma_n}$ when $\sigma_n$ is a uniform random $231$-avoiding permutation of size $n$.
As pointed out in the previous section, $\varphi_{\sigma_n}$ is a uniform Dyck path of length $2n.$ 
The proof will be similar to the proof of Proposition \ref{prop:LDP_321_pi_intro} but we again emphasize that, although the underlying discrete objects are the same, the topology in which we work is different in both cases. 
As in Part I, our strategy will consist in comparing our random objects with a simpler model.

%
%
    Let $(\eps_i)_{i \ge 1}$ be i.i.d.~Rademacher random variables, $S_k := \sum_{i=1}^k \eps_i$,
    \[
        Z_n : t \in [0,1] \mapsto \frac{1}{2n} S_{\lfloor 2nt \rfloor},
    \]
    and $W_n$ its continuous polygonal interpolation. We recall that the function $J$ is defined in~\eqref{eqn:defn_J}. We will write $\mathcal{C}=\mathcal{C}([0,1])$, and we recall from~\eqref{eqn:defn_F} the definition of the compact subset $\mathcal{E} \subset \mathcal{C}$. By Mogulskii's theorem (see \cite{mogulskij1976LDP_Random_Walks}, or \cite[Theorem 5.1.2]{dembo1998large-deviations}), $(W_n)_{n \ge 1}$ satisfies an LDP on $\mathcal{C}$ at speed $n$ with good rate function
    \[
        H(\varphi) =
        \begin{cases}
            2 \int_0^1 J(\varphi'(t)) \mathrm{d}t & \text{if $\varphi$ is 1-Lipschitz and } \varphi(0)=0;\\
            \infty & \text{otherwise.}
        \end{cases}
    \]
   The random variables $\varphi_{\sigma_n}$ can be written as $W_n$ conditioned on $\{S_{2n}=0,\ \min_{k \le 2n} S_k \ge 0\}$.

\begin{proof}[Proof of Proposition~\ref{prop:LDP_231_pi_intro}]
For $\varphi \in \mathcal C$ and $\varepsilon >0,$ we denote by
\[ B(\varphi , \varepsilon ) := \{ f\in  \mathcal C : \| \varphi-f\|_\infty \le \varepsilon\}\]
the ball of radius $\varepsilon$ for the uniform norm.
By construction, for $n\ge 1,$ both $\varphi_{\sigma_n}$ and $W_n$ belong to the ball $B(0,1) \subset \mathcal C,$ which is compact, so it suffices to establish a weak LDP in $B(0,1).$ As   $\varphi_{\sigma_n} \in \mathcal E,$ which is closed, Proposition~\ref{prop:LDP_231_pi_intro} is obtained by restriction.
More precisely, we have to show the following upper and lower bounds:
\[ \lim_{\varepsilon \downarrow 0} \limsup_{n \rightarrow\infty} 
\frac1n \log
\mathbb P(\varphi_{\sigma_n} \in B(\varphi , \varepsilon )) \le - H(\varphi)\]
and

\[ \lim_{\varepsilon \downarrow 0} \liminf_{n \rightarrow\infty} 
\frac1n \log
\mathbb P(\varphi_{\sigma_n} \in B(\varphi , \varepsilon )) \ge - H(\varphi),\]
 for any $\varphi\in \mathcal E.$
Indeed, if $\varphi \notin \mathcal E,$ for $\varepsilon$ small enough and $n$ large enough, the event $\{\varphi_{\sigma_n} \in B(\varphi , \varepsilon )\}$ is empty and the bounds are trivial.

 The upper bound is easy to show.
    For any Borel set $B$ in $\mathcal{C}([0,1])$, counting paths gives
    \begin{align*}
        \mathbb{P}\big[ \varphi_{\sigma_n}\in B \big]
       & = \frac{\#\{\text{Dyck paths of length $2n$ in }B\}}{C_n}\\
        & \le \frac{\#\{\text{walks of length }2n\text{ in }B\}}{C_n}
        = \frac{4^n}{C_n}\, \mathbb{P}\big[ W_n \in B \big],
    \end{align*}
    where we recall that $C_n$ is the Catalan number and $4^n/C_n = \Theta(n^{3/2})$ is subexponential. This is the same counting argument as in the proof of Proposition \ref{prop:LDP_321_pi_intro}.
   Now, if we choose $B=  B(\varphi , \varepsilon ) \cap \mathcal{E}$, which is a closed subset of $\mathcal{C}$, and we use the LDP for $(W_n),$ we get the upper bound.

 We now go to the proof of the lower bound.
    Fix $\varphi \in \mathcal E$ with $H^{231}(\varphi)<\infty$ and $\eps>0$. 
    Note that $\varphi$ is nonnegative and that $\varphi(0) = \varphi(1) = 0.$

    For any possible path $W_n,$ we denote by $(S_k)_{0 \le k \le 2n}$ the corresponding walk. We introduce
    $m_n := -\min_{k \le 2n} S_k \ge 0$ and $a_n := S_{2n}$ and note that $a_n+2m_n \ge 0$.
    If $W_n \in B(\varphi,\varepsilon),$ we have $m_n \le 2\eps n$ and $|a_n| \le 2\eps n$ since $\varphi$ is nonnegative on $[0,1]$ and $\varphi(1) = 0.$
	The scheme of the proof of the lower bound is very similar to the proof of Proposition~\ref{prop:LDP_321_pi_intro}. We define a map $T$ from paths to Dyck paths as follows:
    \begin{itemize}
        \item Flip the first $m_n$ negative steps of the path to $+1$ (this lifts the minimum to $0$).
        \item If $a_n+2m_n>0$, flip the last $\frac{a_n+2m_n}{2}$ positive steps to $-1$; 
        if $a_n+2m_n=0$, do nothing.
    \end{itemize}
    After the first item, the path is nonnegative and ends at height $a_n+2m_n$, and the quantity $\frac{|a_n+2m_n|}{2}$ is an integer since $a_n$ is even.
    The second item then brings the endpoint to $0$ without creating negative heights.
    The resulting path $T(W_n)$ is therefore a Dyck path of length $2n$.
    Moreover, since at most $m_n+\frac{|a_n+2m_n|}{2} \le 5\eps n$ steps are flipped, the sup norm changes by at most $5\eps.$


Let us evaluate the number of preimages of $T(W_n).$ The choice of flipped steps is determined by which $m_n$ negative steps are turned positive and which $\frac{|a_n+2m_n|}{2}$ steps are flipped by the second item.
    As in the proof of Proposition~\ref{prop:LDP_321_pi_intro},  for $\varepsilon$ such  that $5\varepsilon<1/2,$ the number of possibilities is bounded by
    \[
        \sum_{k \le 5\eps n} \binom{2n}{k} \le 5\eps n \binom{2n}{5\eps n}
        \le \frac{ e \sqrt{10\eps n}}{2\pi \sqrt{1-10\eps}} \big( (10\eps)^{10\eps} (1-10\eps)^{1-10\eps} \big)^{-n},
    \]
    which is subexponential in $n$.
    Therefore
\[    \lim_{\varepsilon \downarrow 0} \liminf_{n \rightarrow\infty} 
\frac1n \log
\mathbb P(\varphi_{\sigma_n} \in B(\varphi , 5\varepsilon )) \ge
    \lim_{\varepsilon \downarrow 0} \liminf_{n \rightarrow\infty} 
    \frac1n \log
    \mathbb P(W_n \in B(\varphi , \varepsilon )) \ge - H(\varphi) = - H^{231}(\varphi),\]
   where we used the lower bound of the LDP for $(W_n)$ and the fact that $H(\varphi) = H^{231}(\varphi)$ whenever they are both finite.
\end{proof}

As in Part I, the last step is to apply Lemma \ref{lem:Varadhan} to deduce an LDP for Mallows random permutations. 
We define the action $A_{\beta}$ on $\mathcal E$ by
\[ A_{\beta}(\varphi) = H(\varphi) - 2 \beta \int_0^1  \varphi(t)  \mathrm{d}t. \]
By the exact same argument as Corollary~\ref{cor:LDP_Mallows_321}, we obtain the following.

\begin{cor}\label{cor:LDP_Mallows_231}
     Let $\beta\in \mathbb R.$  Let $\tau_n^\beta$ be a Mallows random permutation
     with parameter $q_n := \ee^{\beta/n}$, conditioned to avoid 231.
     Then the function $\varphi_{\tau_n^{\beta}}$ satisfies a large deviation principle on $\mathcal E$ at speed $n$ with rate function
     \[\varphi \mapsto A_\beta (\varphi) - \inf A_\beta.\]
\end{cor}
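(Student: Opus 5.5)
The plan mirrors the proof of Corollary~\ref{cor:LDP_Mallows_321}: a change of measure from the uniform model followed by Varadhan's Lemma~\ref{lem:Varadhan}.

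\textbf{Step 1: the Radon--Nikodym derivative.} By Proposition~\ref{prop: inv of 231 avoiding permutation}, for every $231$-avoiding $\sigma$ of size $n$,
\[
\frac{\beta}{n}\inv(\sigma) = 2\beta n \int_0^1 \varphi_\sigma(t)\,\mathrm dt - \frac{\beta}{2}.
\]
Since $\sigma\mapsto\varphi_\sigma$ is injective (the Dyck path bijection), the law of $\varphi_{\tau_n^\beta}$ is absolutely continuous with respect to the law of $\varphi_{\sigma_n}$, where $\sigma_n$ is uniform $231$-avoiding. The density is
\[
\frac{C_n e^{-\beta/2}}{Z_n^{\beta}} \exp\Big(n\, F(\varphi)\Big), \qquad F(\varphi):=2\beta\int_0^1\varphi .
\]
The constant factor $e^{-\beta/2}$ is absorbed in the normalisation, so it has no effect at exponential scale $n$.

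\textbf{Step 2: checking the hypotheses of Lemma~\ref{lem:Varadhan}.}
\begin{enumerate}
\item $F$ is continuous on $\mathcal E$ for the uniform norm, since $|F(\varphi)-F(\psi)|\le 2|\beta|\,\|\varphi-\psi\|_\infty$.
\item $F$ is bounded on $\mathcal E$, since $0\le\varphi\le 1/2$ there.
\item $(\varphi_{\sigma_n})$ satisfies an LDP at speed $n$ with rate function $H^{231}$, by Proposition~\ref{prop:LDP_231_pi_intro}.
\item $H^{231}$ is good. The space $\mathcal E$ is compact, so it suffices that $H^{231}$ is lower semicontinuous. This holds because rate functions are lower semicontinuous by definition. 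Alternatively, $H^{231}$ is the restriction to the closed set $\mathcal E$ of Mogulskii's good rate function $H$, with which it agrees on $\mathcal E$.
\end{enumerate}

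\textbf{Step 3: conclusion.} Lemma~\ref{lem:Varadhan} then gives an LDP for $\varphi_{\tau_n^\beta}$ with rate function
\[
H^{231}-F-\inf(H^{231}-F) \;=\; A_\beta-\inf A_\beta,
\]
because $A_\beta = H - 2\beta\int_0^1\varphi$ coincides with $H^{231}-F$ on $\mathcal E$.

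There is no real obstacle here. The only points requiring care are the harmless additive constant $-\beta/2$ in the inversion formula, and the identification of $H$ with $H^{231}$ on $\mathcal E$, both of which are immediate.
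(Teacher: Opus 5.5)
Your proposal is correct and follows the paper's own argument. The paper proves this corollary ``by the exact same argument as Corollary~\ref{cor:LDP_Mallows_321}'': Proposition~\ref{prop: inv of 231 avoiding permutation} turns the Mallows weight into the bounded continuous tilt $\exp\bigl(2\beta n\int_0^1\varphi\bigr)$ of the uniform law (up to the constant $\ee^{-\beta/2}$, which the normalization absorbs), and Proposition~\ref{prop:LDP_231_pi_intro} together with Varadhan's Lemma~\ref{lem:Varadhan} then gives the result; your checks of boundedness, continuity and goodness (via compactness of $\mathcal E$) fill in details the paper leaves implicit.
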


\section{An optimization problem for excursions}

We fix $\beta \in \R$.
\begin{prop}\label{prop:opti_problem_231}
	The action $A_{\beta}$ has a unique minimizer $\varphi_\beta$ in $\mathcal{E}$. 
    Moreover, for $\beta \leq 0$, we have $\varphi_{\beta}(t)=0$ for all $t \in [0,1]$. 
    For $\beta>0$, it is given by the formula
	\begin{equation}\label{eqn:defn_phi_beta}
		\varphi_\beta(t)= \frac{1}{\beta} \log\left( \frac{ 1 + \ee^{\beta} }{ 1 + \ee^{\beta(1-2t)}} \right) - t.
	\end{equation}
\end{prop}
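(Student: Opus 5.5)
The plan mirrors the proof of Proposition~\ref{prop:max_action}. First, existence and uniqueness. The set $\mathcal E$ is convex and compact for the uniform norm. The functional $\varphi\mapsto 2\int_0^1 J(\varphi')$ is lower semicontinuous under uniform convergence, being a good rate function by Mogulskii's theorem. It is also strictly convex along segments where it is finite, because $J$ is strictly convex on $[-1,1]$ and two $1$-Lipschitz functions vanishing at $0$ with a.e.\ equal derivatives coincide. The term $-2\beta\int\varphi$ is linear and continuous. Hence $A_\beta$ is lower semicontinuous and strictly convex on the convex compact set $\mathcal E$, so it has a unique minimizer.

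For $\beta\le 0$ the argument is immediate. We have $J\ge 0$ with equality only at $0$, and $-2\beta\int\varphi\ge 0$ since $\varphi\ge 0$. Both terms therefore vanish exactly at $\varphi\equiv 0$, which is then the minimizer.

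For $\beta>0$ we use the first-order criterion of Proposition~\ref{prop:max_concave}: it suffices to show that the one-sided directional derivative of $A_\beta$ at $\varphi_\beta$ is nonnegative (indeed zero) in every admissible direction $h$. Here $h$ is Lipschitz with $h(0)=h(1)=0$, and $\varphi_\beta+th\in\mathcal E$ for small $t>0$. First we check that $\varphi_\beta$ is an interior-type point. We have $\varphi_\beta(0)=0$ and $\varphi_\beta(1)=\frac1\beta\log\frac{1+\ee^\beta}{1+\ee^{-\beta}}-1=0$. The derivative is
\[
\varphi_\beta'(t)=\frac{2\ee^{\beta(1-2t)}}{1+\ee^{\beta(1-2t)}}-1=\tanh\!\big(\beta(\tfrac12-t)\big),
\]
so $|\varphi_\beta'|\le\tanh(\beta/2)<1$, uniformly away from $\pm1$. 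Positivity on $(0,1)$ follows because $\varphi_\beta$ increases on $[0,\tfrac12]$, decreases on $[\tfrac12,1]$, and vanishes at the endpoints. Consequently $J'(\varphi_\beta')$ is bounded, and differentiation under the integral is justified by dominated convergence. This gives
\[
\frac{\mathrm d}{\mathrm dt}\Big|_{t=0^+}A_\beta(\varphi_\beta+th)=\int_0^1\Big(2J'(\varphi_\beta'(t))\,h'(t)-2\beta h(t)\Big)\,\mathrm dt .
\]

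The key identity is $J'(y)=\tfrac12\log\frac{1+y}{1-y}=\operatorname{artanh}(y)$. It yields $2J'(\varphi_\beta'(t))=2\beta(\tfrac12-t)=\beta-2\beta t$. Integrating by parts with $h(0)=h(1)=0$,
\[
\int_0^1(\beta-2\beta t)\,h'(t)\,\mathrm dt=\int_0^1 2\beta\, h(t)\,\mathrm dt,
\]
so the derivative vanishes identically. This is the Euler--Lagrange equation $\frac{\mathrm d}{\mathrm dt}J'(\varphi')=-\beta$, whose solution with $\varphi(0)=\varphi(1)=0$ forces $\varphi'=\tanh(\beta(\tfrac12-t))$; this is how formula~\eqref{eqn:defn_phi_beta} is found. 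By convexity, the vanishing directional derivative identifies $\varphi_\beta$ as the unique minimizer.

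The main obstacle is making the appeal to Proposition~\ref{prop:max_concave} rigorous. One must check that the admissible directions (Lipschitz $h$ vanishing at the endpoints) suffice, and that convexity of $A_\beta$ on $\mathcal E$ turns the first-order condition into global minimality. Concretely, for any $\psi\in\mathcal E$ with $A_\beta(\psi)<\infty$, take $h=\psi-\varphi_\beta$ and use $A_\beta(\psi)-A_\beta(\varphi_\beta)\ge\frac{\mathrm d}{\mathrm dt}|_{0^+}A_\beta(\varphi_\beta+th)=0$. This convexity inequality holds even though $J'$ blows up at $\pm1$, because the derivative is taken at $\varphi_\beta$, where $\varphi_\beta'$ stays away from $\pm1$.
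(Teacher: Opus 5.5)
Your proposal is correct and follows essentially the same route as the paper. Existence and uniqueness come from compactness and strict convexity, the case $\beta\le 0$ is immediate, and for $\beta>0$ you apply Proposition~\ref{prop:max_concave} using the fact that $J'(\varphi_\beta')+\beta t$ is constant. Your choice of Lipschitz directions $h$ vanishing at the endpoints, rather than the paper's $C^1$ bridges, is the accurate description of $V_{\varphi_\beta}(\mathcal E)$, since that set must contain $\psi-\varphi_\beta$ for every $\psi\in\mathcal E$.
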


\begin{proof}
	We first note that the case $\beta \leq 0$ is immediate since $H^{231}(\varphi) \geq 0$ for all $\varphi \in \mathcal{E}$, with equality if and only if $\varphi\equiv0$. 
    We now assume $\beta>0$.
	
	As in Part~\ref{part:321}, we will rely on Proposition~\ref{prop:max_concave}. We first note that $\mathcal{E}$ is compact for the topology of uniform convergence, and that the action $A_{\beta}$ is strictly convex as the sum of a strictly convex term and a linear one.
    We now define the subset
    \[
        \mathcal{E}' := \left\{ \varphi \in \mathcal{C}^1([0,1]) : 
        \begin{array}{ll} \varphi(0) = \varphi(1) = 0, \varphi(t) > 0 \text{ for all }0<t<1,\\
            \varphi'(0) > 0, \varphi'(1) < 0, \lVert \varphi' \rVert_\infty < 1 \end{array} 
        \right\} \,,
    \]
    and the set of bridges
    \[
        \mathcal{B} := \left\{ \alpha \in \mathcal{C}^1([0,1]) : \alpha(0) = \alpha(1) = 0 \right\} \,.
    \]
    It is straightforward to check that $\varphi_{\beta} \in \mathcal{E}'$ and that if $\varphi \in \mathcal{E}'$ then $V_\varphi(\mathcal{E}) = \mathcal{B},$ where we recall that $V_\varphi(K)$ is the set of directions $\alpha$ such that $\varphi + t\alpha \in K,$ for $t$ small enough.
    Therefore, by Proposition~\ref{prop:max_concave}, it is sufficient to check, for all $\alpha \in \mathcal{B}$,
    \begin{align*}
        0 &= \frac{\mathrm{d}}{\mathrm{d}s}\Big|_{s=0^+} A_{\beta} \left( \varphi+s\alpha \right)\\
        &= \int_0^1 \left( 2\alpha'(t) J'(\varphi_{\beta}'(t)) - 2\beta\alpha(t) \right) \mathrm dt\\
      	&= \int_0^1 \left( 2J'(\varphi'_{\beta}(t)) +2\beta t \right) \alpha'(t) \, \mathrm dt \, .
    \end{align*}
    Since $\int \alpha' = 0$, it is sufficient to check that $J'(\varphi'_{\beta}(t))+\beta t$ is constant, which is straightforward\footnote{
        Alternatively, as in Remark~\ref{rem:equadiff}, we could recover the formula for $\varphi_{\beta}$ by solving the differential equation $\log\left(\frac{1+\varphi'}{1-\varphi'}\right) = -2\beta t + c$.
    }.
\end{proof}

%

We can now prove the asymptotic estimate \eqref{eq:partition231} for the partition fonction of the model.

\begin{proof}[Proof of~\eqref{eq:partition231}]
The arguments are essentially the same as for equation~\eqref{eq:partition321}.
By Proposition~\ref{prop:LDP_231_pi_intro} and the first estimate in Lemma~\ref{lem:Varadhan}, we have that
\[ \lim_{n \rightarrow \infty} \frac{1}{n} \log \frac{Z_n^\beta}{Z_n^0} = -\inf A_\beta = - A_\beta(\varphi_\beta).\]

The latter can be (almost) explicitly computed. We have that
\[ \varphi_\beta'(t) = \frac{\ee^{\beta(1-2t)}-1}{\ee^{\beta(1-2t)}+1}\]
and using the change of variable $ y = \ee^{\beta(1-2t)},$ we get that
\[ H(\varphi_\beta) = 2 \int_0^1 J(\varphi_\beta'(t)) \dd t = 2 \log 2 +  \frac{1}{\beta} \int_{\ee^{-\beta}}^{\ee^{\beta}} \left( \frac{\log y }{1+y} - \frac{\log (1+y) }{y} \right) \dd y.\]
On the other hand,
\[ 2\beta \int_0^1 \varphi_\beta(t) \dd t = 2 \log (1+\ee^\beta) - \beta
- \frac{1}{\beta} \int_{\ee^{-\beta}}^{\ee^{\beta}}  \frac{\log (1+y) }{y} \dd y,\]
leading to the asymptotics anounced in equation~\eqref{eq:partition231}.
\end{proof}

We now conclude by proving the limit shape for 231-avoiding Mallows permutations.

\begin{proof}[Proof of Theorem~\ref{thm:phase_transition_231}]
	It follows immediately from Corollary~\ref{cor:LDP_Mallows_231} and Proposition~\ref{prop:opti_problem_231} that $\varphi_{\tau_n^{\beta}}$ converges almost surely (for any coupling) to $\varphi_{\beta}$. 
    By Lemma~\ref{lem:bicontinuity}, this implies the almost sure convergence of $\tau_n^{\beta}$ to the unique permuton $\mu_{\beta}^{231}$ such that $\varphi_{\mu_{\beta}^{231}}=\varphi_{\beta}$. 
    The only thing left to check is that $\mu_{\beta}^{231}$ corresponds to the description given by Theorem~\ref{thm:phase_transition_231}. 
    We first notice that for $\beta \leq 0$, we have $\varphi_{\beta}=0$, and the diagonal permuton $\mu_{\diagup}$ indeed satisfies $f_{\mu_{\diagup}}(t)=t$ for $0 \leq t \leq 1$, so $\varphi_{\mu_{\diagup}} \equiv 0$.
	
	We now assume $\beta>0$. Let $f_\beta\in\mathcal F$ be obtained from $\varphi_\beta\in\mathcal E$ by flipping, rotating and rescaling as described in Section~\ref{subsec:231_continuity}. The graph of $f_\beta$ is characterized by
	\[
	\left\{ (x,f_\beta(x)) : x\in[0,1] \right\}
	= \left\{ (t+\varphi_\beta(t), t-\varphi_\beta(t)) : t\in[0,1] \right\} \, .
	\]
	Plugging in the definition~\eqref{eqn:defn_phi_beta} of $\varphi_{\beta}$, we obtain
	\[ f_\beta(x)= 1 - \frac{1}{\beta} \log\left( 1 + \ee^\beta - \ee^{\beta x} \right) \, . \]
	
	From here, we deduce an explicit description of $\mu_\beta$ using Proposition~\ref{prop: support and density of 231 permuton}. Indeed, $\varphi_\beta(1-t) = \varphi_\beta(t)$ for all $t\in[0,1]$, and thus the graph of $f_\beta$ is symmetric with respect to the antidiagonal.
	The other assumptions of Proposition~\ref{prop: support and density of 231 permuton} are easy to check, and we have 
	\[
	x_\beta^* = \frac1\beta \log\left( \frac{1+\ee^\beta}{2} \right),\quad \quad
	f_\beta(x_\beta^*) = 1 - \frac1\beta \log\left( \frac{1+\ee^\beta}{2} \right) \,.
	\]
	Furthermore, for all $x \in [0,1]$, we have
	\[
		f_\beta'(x) = \frac{\ee^{\beta x}}{1 + \ee^\beta - \ee^{\beta x}} \,.
	\]
	Therefore, it follows from Proposition~\ref{prop: support and density of 231 permuton} that $\mu_\beta$ is the permuton with densities $1 \wedge \frac{\ee^{\beta x}}{1 + \ee^\beta - \ee^{\beta x}}$ on $\{ (x,f_\beta(x)) : x\in[0,1] \}$ and $0 \vee \frac{1 + \ee^\beta - 2\ee^{\beta x}}{1 + \ee^\beta - \ee^{\beta x}}$ on $\{ (x,1-x) : x\in[0,1] \}$. 
    Note that the first density (resp. the second) is equal to $1$ (resp. to $0$) if and only if $x\ge x_\beta^*$. This fits the formula given in Theorem~\ref{thm:phase_transition_231}.
\end{proof}


\appendix
\part{Appendix}
\section{Markov chain simulation}
\label{sec: simulations by reversible MC}

In this appendix, we explain how to generate the random permutations $\tau_n^{\beta, \alpha}$ for $\alpha\in\{321,231\}$, explaining how we got the simulations of \Cref{fig:simulations_Constraint_Mallows,fig:simulations_WithLimitCurves}.
This relies on interpreting the law of $\tau_n^{\beta, \alpha}$ as the stationary measure of an adequate Markov chain on the set $\Av_n(\alpha)$ of $\alpha$-avoiding permutations.
Specifically, we use a reversible Markov chain and the following standard lemma.

\begin{lem}
\label{lem: reversible and tilted MC}
    Let $X$ be a finite state space, $q\ge1$, and $f:X\to\R_+$.
    Write $\mu^f$ for the $(q,f)$-tilted measure on $X$, that is, $\mu^{q,f}(x) \propto q^{f(x)}$ for all $x\in X$.
    If $p : X^2 \to [0,1]$ is a transition kernel, denote by $\tilde p$ the transition kernel characterized by:
    \begin{align*}
        \forall (x,y)\in X^2,\quad
        \tilde p(x,y) = 
        \left\{
        \begin{array}{ll}
            p(x,y) & \text{if }x\ne y \text{ and }f(y)\ge f(x) \,;\\
            p(x,y)q^{f(y)-f(x)} & \text{if }x\ne y \text{ and }f(y)< f(x) \,;
        \end{array}
        \right.
    \end{align*}
    and $\tilde p(x,x)$ is adjusted so that this defines a transition kernel.
    If $p$ is reversible for the uniform measure on $X$, then $\tilde p$ is reversible for the measure $\mu^{q,f}$.
\end{lem}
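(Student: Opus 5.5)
We will verify detailed balance directly: for all $x\neq y$ in $X$ we aim to show
\[
\mu^{q,f}(x)\,\tilde p(x,y) = \mu^{q,f}(y)\,\tilde p(y,x).
\]
The diagonal terms need no check, since detailed balance is trivial when $x=y$. Reversibility of $p$ for the uniform measure means exactly that $p$ is symmetric, $p(x,y)=p(y,x)$. Writing $\mu^{q,f}(x)=q^{f(x)}/Z$, it therefore suffices to prove
\[
q^{f(x)}\tilde p(x,y)=q^{f(y)}\tilde p(y,x)
\]
after cancelling $Z$ and the common factor $p(x,y)=p(y,x)$.

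The argument is a two-case split, and by the symmetry of the target identity in $x,y$ we may assume $f(y)\ge f(x)$. Then $\tilde p(x,y)=p(x,y)$. For the reverse move, either $f(x)<f(y)$, in which case $\tilde p(y,x)=p(y,x)q^{f(x)-f(y)}$, or $f(x)=f(y)$, in which case $\tilde p(y,x)=p(y,x)$, which again equals $p(y,x)q^{f(x)-f(y)}$. In both subcases
\[
q^{f(y)}\tilde p(y,x)=q^{f(x)}p(x,y)=q^{f(x)}\tilde p(x,y),
\]
which is the required identity.

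The only step needing real care is checking that $\tilde p$ is a genuine transition kernel, i.e.\ that the adjusted diagonal entry $\tilde p(x,x)=1-\sum_{y\neq x}\tilde p(x,y)$ is nonnegative. This is where the hypothesis $q\ge1$ enters. When $f(y)<f(x)$ we have $q^{f(y)-f(x)}\le 1$, so $\tilde p(x,y)\le p(x,y)$ for every $y\neq x$, and hence
\[
\sum_{y\neq x}\tilde p(x,y)\le \sum_{y\neq x}p(x,y)\le 1.
\]
Thus $\tilde p(x,x)\ge0$ and $\tilde p$ is a stochastic matrix. Being reversible for $\mu^{q,f}$, it has $\mu^{q,f}$ as a stationary measure. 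I expect no substantive obstacle beyond this nonnegativity check; for $q<1$ one would instead swap the roles of the two cases.
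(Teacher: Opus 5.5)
Your proof is correct and follows the paper's argument: for $x\neq y$ you assume without loss of generality an ordering of $f(x)$ and $f(y)$, then verify $q^{f(x)}\tilde p(x,y)=q^{f(y)}\tilde p(y,x)$ using the symmetry $p(x,y)=p(y,x)$, which is exactly what reversibility for the uniform measure means. Your extra check that the adjusted diagonal entry $\tilde p(x,x)$ is nonnegative, which is where $q\ge1$ is used, is a detail the paper leaves implicit.
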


In words, the tilted transition kernel $\tilde p$ is constructed as follows:
from $x\in X$, use $p$ to find a transition towards some $y\in X$.
If $f(y) \ge f(x)$, accept this transition.
If $f(y) < f(x)$, accept this transition with probability $q^{f(y)-f(x)}$; in case of reject, stay at $x$.

\begin{proof}
    This is a straightforward formal check.
    Let $(x,y)\in X$, and assume w.l.o.g.~that $x\ne y$ and $f(y)\le f(x)$.
    Therefore $\tilde p(x,y) = p(x,y)q^{f(y)-f(x)}$ and $\tilde p(y,x) = p(y,x)$, and thus $q^{f(x)}\tilde p(x,y) = q^{f(y)} p(x,y) = q^{f(y)} p(y,x) = q^{f(y)}\tilde p(y,x)$, where we used the reversibility of $p$ in the second equality.
\end{proof}

Finding a reversible Markov chain for $\tau_n^{\beta, \alpha}$ then boils down to finding a reversible Markov chain for the uniform measure on $\Av_n(\alpha)$, for which the inversion differential $\inv(\tau) - \inv(\sigma)$ can be efficiently computed on the transitions $\sigma \mapsto \tau$.

Recall that $\Av_n(\alpha)$ is in one-to-one correspondence with the set $\Dyck_n$ of Dyck paths of length $2n$, both for $\alpha=321$ and $\alpha=231$.
If $d : [0,2n] \to \R_+$ is a Dyck path and $i\in [1,2n-1]$, transform $d$ into $d^{(i)} \in \Dyck_n$ as follows:
\begin{itemize}
    \item If $i$ is a peak index for $d$, i.e.~if $d(i-1) = d(i)-1 = d(i+1)$, and if $d(i)\ge2$, then transform this peak into a valley in $d^{(i)}$.
    That is, let $d^{(i)}(j) := d(j)$ for all indices $j\ne i$, and let $d^{(i)}(i) := d(i)-2$.
    \item If $i$ is a valley index for $d$, i.e.~if $d(i-1) = d(i)+1 = d(i+1)$, then transform this valley into a peak in $d^{(i)}$.
    That is, let $d^{(i)}(j) := d(j)$ for all indices $j\ne i$, and let $d^{(i)}(i) := d(i)+2$.
    \item In all other cases, let $d^{(i)} := d$.
\end{itemize}
Then define a transition kernel on $\Dyck_n$ via $p(d,\cdot) = \frac{1}{2n-1} \sum_{i=1}^{2n-1} \delta_{d^{(i)}}$, that is, apply the above transform at a uniformly random index.
It is straightforward to check that this defines an irreducible Markov chain on $\Dyck_n$ that is reversible for the uniform measure.

Hence we can approximate the uniform measure on $\Av_n(\alpha)$ by running this Markov chain on $\Dyck_n$ for ``a long time'', and then applying the one-to-one correspondence from $\Dyck_n$ to $\Av_n(\alpha)$.
To approximate $\tau_n^{\beta,\alpha}$ instead, it suffices to use the transition kernel $\tilde p$ given by Lemma~\ref{lem: reversible and tilted MC}.
In the case $\alpha=321$, $\inv(d^{(i)}) - \inv(d)$ can be computed in constant time thanks to Lemma~\ref{lem:inv_sigma}.
This is even simpler for $\alpha=231$ thanks to Proposition~\ref{prop: inv of 231 avoiding permutation}: $\inv(d^{(i)}) - \inv(d)$ is equal to $+1$ if a valley was transformed into a peak, and to $-1$ if a peak was transformed into a valley.

Since we do not know the mixing time of the above Markov chains\footnote{
   It is known that the Markov chain $p$ on $\Dyck_n$ has mixing time $O(n^3 \log(n))$, see e.g.~\cite{mcshine1997mixing,wilson2004mixing}, but this does not give us an estimate for the mixing time of $\tilde p$.
}, we use the following standard heuristics.
We start from two very different initial conditions (e.g., the Dyck paths with minimal and maximal global heights respectively; the first one corresponds to the permutations $\mathrm{id}$, whereas the second one correpsonds to $2\,3\dots(n-1)1$ for $\alpha=321$ and to $n\,(n-1)\dots1$ for $\alpha=231$), 
perform the Markov chain steps on both (using the same random index $i$ on both at each step),
and compare the resulting pictures.
If the two pictures look similar, it means that, heuristically, the number of steps was large enough so that the chain forgets its initial condition.
In practice, we get satisfying results after 100 million steps;
to be on the safe side, we
used 300 million steps to generate the pictures of \Cref{fig:simulations_Constraint_Mallows,fig:simulations_WithLimitCurves}.

\section{Useful lemmas}\label{sec:appendix}

\begin{lem}\label{lem:cv_push_forward}
    Let $\mu_n$ be a sequence of finite measures on a space $\Omega$ converging in total variation to a finite measure $\mu$,
    and $f_n$ a sequence of measurable functions on $\Omega$ converging $\mu$-a.e.~to a function $f$.
    Then $(f_n)_{\#}\mu_n$ converges weakly to $f_{\#}\mu$.
\end{lem}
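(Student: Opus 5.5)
We test weak convergence against a bounded continuous function $g$ on the target space, and split the error into a part coming from the change of measure and a part coming from the change of function. Write
\[
\int g\circ f_n \,\dd\mu_n - \int g\circ f\,\dd\mu
= \Big(\int g\circ f_n\,\dd\mu_n - \int g\circ f_n\,\dd\mu\Big) + \Big(\int g\circ f_n\,\dd\mu - \int g\circ f\,\dd\mu\Big).
\]

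\emph{First term.} Since $g\circ f_n$ is measurable and bounded by $\|g\|_\infty$, this term is at most $\|g\|_\infty\,\|\mu_n-\mu\|_{TV}$ in absolute value. Here $\|\cdot\|_{TV}$ denotes the total variation norm of the signed measure, which is the convergence assumed in the statement. Hence the first term tends to $0$, uniformly in the choice of functions $f_n$.

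\emph{Second term.} The functions $f_n$ converge to $f$ $\mu$-almost everywhere and $g$ is continuous, so $g\circ f_n\to g\circ f$ $\mu$-almost everywhere. These functions are dominated by the constant $\|g\|_\infty$, which is $\mu$-integrable because $\mu$ is finite. Dominated convergence therefore gives that the second term tends to $0$.

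Combining the two terms, $\int g\,\dd (f_n)_\#\mu_n\to\int g\,\dd f_\#\mu$ for every bounded continuous $g$, which is weak convergence. The only point needing care is that the pointwise convergence holds only $\mu$-a.e. and not $\mu_n$-a.e. The decomposition above avoids this issue: the total variation bound in the first term does not require any pointwise control of $f_n$, so all the a.e. convergence is used against $\mu$ only. No step is expected to be a real obstacle.
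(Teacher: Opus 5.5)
Your proof is correct and follows essentially the same route as the paper. The paper also first removes the change of measure through the total variation bound $\dTV((f_n)_{\#}\mu_n,(f_n)_{\#}\mu)\le\dTV(\mu_n,\mu)$. It then handles $(f_n)_{\#}\mu\to f_{\#}\mu$ by observing that $f_n(X)\to f(X)$ almost surely for $X$ with law $\mu/\mu(\Omega)$, so convergence in distribution follows; this is exactly your dominated-convergence step, phrased probabilistically (and yours avoids normalizing by $\mu(\Omega)$, so it also covers the trivial case $\mu(\Omega)=0$).
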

\begin{proof}
    The distance $\dTV( (f_n)_{\#}\mu_n, (f_n)_{\#}\mu)$ is at most $\dTV(\mu_n,\mu)$, which tends to $0$.
    Hence it is enough to prove that $(f_n)_{\#}\mu$ converges weakly to $f_{\#}\mu$.
    Letting $X$ be a random variable with law $\mu/\mu(\Omega)$, it holds by assumption that $f_n(X)$ converges almost surely to $f(X)$.
    Since almost sure convergence implies convergence in distribution, the law $(f_n)_{\#}\mu/\mu(\Omega)$ of $f_n(X)$ converges weakly to that of $f(X)$, which is $f_{\#}\mu/\mu(\Omega)$.
\end{proof}

\begin{lem}\label{lem:inf}
Let us consider two sets $A$ and $B$ and a function $h$ on $A \cup B$.
It holds that
\begin{equation}\label{eq:infAB}
\inf_{z \in A \cup B} h(z) +\inf_{z \in A \cap B} h(z)
\ge \inf_{z \in A} h(z) + \inf_{z \in B} h(z).
\end{equation}
Moreover, if the inequality is strict, then
\begin{equation}\label{eq:AsansB}
    \inf_{z \in A} h(z) = \inf_{z \in A \backslash B} h(z) < \inf_{z \in A \cap B} h(z),
\end{equation}
and the same holds if we switch the roles of $A$ and $B$.
\end{lem}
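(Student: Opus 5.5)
The inequality \eqref{eq:infAB} and its strict case can be handled by a direct case analysis on where the infimum over $A\cup B$ is approached. Write $a=\inf_A h$, $b=\inf_B h$, $u=\inf_{A\cup B}h$ and $c=\inf_{A\cap B}h$, with the convention $\inf\emptyset=+\infty$. We always have $u=\min(a,b)$, since $\inf$ over a union is the minimum of the infima. We also have $c\ge \max(a,b)$, because $A\cap B$ is contained in both $A$ and $B$. Adding these gives
\[
u+c\ \ge\ \min(a,b)+\max(a,b)=a+b,
\]
which is \eqref{eq:infAB}. Some care is needed with infinite values; say that $h$ is real valued and that $+\infty$ on the left-hand side is allowed.

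For the strict case, assume $u+c>a+b$. By symmetry, suppose $a\le b$, so that $u=a$ and strictness reads $c>b$. In particular $c>b\ge a$, so $\inf_{A\cap B}h>\inf_A h$. Since $A=(A\setminus B)\cup(A\cap B)$, we get
\[
a=\min\Bigl(\inf_{A\setminus B}h,\ c\Bigr).
\]
Because $c>a$, the minimum must be attained by the first term, so $\inf_{A\setminus B}h=a<c$, which is \eqref{eq:AsansB}. The same argument applied to $B$ (using $c>b$) gives $\inf_B h=\inf_{B\setminus A}h<\inf_{A\cap B}h$. So the conclusion holds with the roles of $A$ and $B$ switched as well, and this case needs no separate treatment.

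The only delicate point is the bookkeeping when $A\cap B=\emptyset$ (then $c=+\infty$), or when infima equal $-\infty$. In the first situation the argument goes through verbatim with the usual conventions. In the applications of the paper, $h(x',y')=x'-y'$ is bounded on the compact sets $G_f(\cdot,\cdot)$, so the infima are finite and are in fact attained.
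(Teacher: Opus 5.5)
Your proof is correct and is essentially the paper's argument. The paper also assumes without loss of generality that $\inf_{A\cup B}h$ equals one of the two infima (it takes $\inf_B h$, you take $\inf_A h$), reduces \eqref{eq:infAB} to the bound $\inf_{A\cap B}h\ge\inf_A h$ (resp.\ $\inf_B h$), and reads off \eqref{eq:AsansB} for both sets from the resulting strict inequalities; you additionally spell out the step $\inf_A h=\min\bigl(\inf_{A\setminus B}h,\inf_{A\cap B}h\bigr)$, which the paper leaves implicit.
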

\begin{proof}
We can assume without loss of generality
that $\inf_{z \in A \cup B} h(z) =\inf_{z \in B} h(z)$.
Then~\eqref{eq:infAB} follows from the trivial inequality
$\inf_{z \in A \cap B} h(z) \ge \inf_{z \in A} h(z)$. This inequality being strict implies 
\[ \inf_{z \in A \cap B} h(z) > \inf_{z \in A} h(z) \geq \inf_{z \in A \cup B} h(z) = \inf_{z \in B} h(z). \]
The first inequality implies~\eqref{eq:AsansB}, and the strict inequality between the first and last terms implies~\eqref{eq:AsansB} with $A$ and $B$ switched.
\end{proof}

\textit{Optimization on convex spaces.}
Let $E$ be a topological vector space and $K \subset E$ convex.
We say that $f:K \to \R$ is strictly convex if for any $x \in K$ and $y \in E$, the function $t \to f(x+ty)$ is strictly convex on the interval where it is defined
(with the convention that a function defined on a singleton of $\R$ is strictly convex).
For any $x \in K$, we denote by $V_x(K)$ the set of directions $y \in E$ such that $x+ty \in K$ for $t>0$ small enough.

\begin{prop}\label{prop:max_concave}
    Let $E$ be a topological vector space and $K \subset E$ compact and convex, and let $f:K \to \mathbb{R}$ be lower semicontinuous and strictly convex.
    Then $f$ has a minimum which is attained at exactly one point $x_0 \in K$.
    Moreover, if $x \in K$ is such that for any $y \in V_x(K)$, we have
    \[ \frac{\mathrm{d}}{\mathrm{d}t}\Big|_{t=0^+} f(x+ty)=0 ,\]
    then $x=x_0$.
\end{prop}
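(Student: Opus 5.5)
Existence of a minimizer follows from the classical fact that a lower semicontinuous function on a compact set attains its infimum. Since $f$ is real-valued on $K$, the family of closed sets $\{x\in K: f(x)\le c\}$, for $c>\inf_K f$, has the finite intersection property. Compactness then gives a point $x_0$ with $f(x_0)=\inf_K f$. Uniqueness comes from strict convexity. Suppose $x_0\neq x_1$ were both minimizers. Set $y=x_1-x_0$. By convexity of $K$, the function $g(t)=f(x_0+ty)$ is defined on $[0,1]$, and it is strictly convex there by assumption. Hence $g(1/2)<\tfrac12(g(0)+g(1))=\inf_K f$, which is a contradiction because $x_0+\tfrac12 y\in K$.

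For the second statement, let $x\in K$ satisfy the vanishing-derivative condition, and suppose $x\neq x_0$. Put $y:=x_0-x$. Convexity of $K$ gives $x+ty\in K$ for all $t\in[0,1]$, so $y\in V_x(K)$. Consider $g(t):=f(x+ty)$ on $[0,1]$. It is strictly convex, and by hypothesis its right derivative at $0$ exists and equals $0$. For a convex function of one variable the difference quotient $(g(t)-g(0))/t$ is nondecreasing in $t$. Strict convexity makes it strictly increasing on $(0,1]$: for $0<s<t$, writing $s$ as a convex combination of $0$ and $t$ gives $g(s)<(1-s/t)g(0)+(s/t)g(t)$. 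Therefore $(g(1)-g(0))/1$ is strictly larger than $(g(s)-g(0))/s$ for every $s\in(0,1)$. Letting $s\downarrow 0$, the latter tends to $g'(0^+)=0$. This gives $g(1)-g(0)\ge (g(1/2)-g(0))/(1/2)>0$. Note that the last inequality uses monotonicity of the quotient and the limit being $0$: the quotient is $\ge 0$ for all $s$, and strictly increasing, so it is $>0$ at $s=1/2$.

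Hence $f(x_0)=g(1)>g(0)=f(x)$, contradicting the minimality of $x_0$. So $x=x_0$. The only delicate point is this last one: one must get a \emph{strict} inequality from a derivative that is merely zero. Comparing the difference quotients at $s=1/2$ and $s=1$, as above, rather than using the limit alone, resolves it. No differentiability beyond the one-sided derivative at $0$ is needed.
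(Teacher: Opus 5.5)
Your proof is correct and follows the same route as the paper. Existence comes from lower semicontinuity plus compactness, uniqueness from the midpoint of two minimizers, and the first-order condition is used along a segment starting at $x$. You spell out, via strictly increasing difference quotients, the strict inequality $h(1)>h(0)$ that the paper simply asserts; and you take the other endpoint to be $x_0$, whereas the paper shows $f(x')>f(x)$ for every $x'\neq x$.
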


\begin{proof}
    The existence of the minimum is immediate by lower semicontinuity.
    Moreover, if $f(x_1)=f(x_2)$ with $x_1 \ne x_2$, then the strict convexity of $f$ implies $f \left( \frac{x_1+x_2}{2}\right) <f(x_1)$, which gives the uniqueness of the minimum.
    Finally, if $x$ satisfies the given condition and $x' \in K \backslash\{x\}$, then let $y=x'-x \in V_x(K)$. The function $h:t \mapsto f(x+ty)$ has right-derivative $0$ at $t=0$ and is strictly convex, so $f(x')=h(1)>h(0)=f(x)$.
    We conclude that $x$ must be the unique minimum $x_0$ of $f$ on $K$, as claimed.
\end{proof}

\section*{Acknowledgements}
This work was initiated during an open problem workshop organized
as part of the ANR project LOUCCCOUM (Large Objects Under Combinatorial Constraints and Outside Uniform Models, ANR-24-CE40-7809). 
All authors are partially supported by this grant.
\bibliographystyle{bibli_perso}
\bibliography{BiblioANR}


\end{document}